\documentclass{amsart}

\usepackage{amssymb,amsthm,amsfonts,latexsym,mathtools,thmtools}
\usepackage[utf8]{inputenc}
\usepackage{xcolor}
\usepackage{mathtools}
\usepackage{hyperref}
\usepackage{mathdots}
\usepackage{array}
\usepackage{enumitem}
\usepackage{bbm}
\usepackage{comment}
\usepackage{caption} 

\newcommand{\C}{\mathbb{C}}
\newcommand{\F}{\mathbb{F}}

\newcommand{\1}{\mathbbm{1}}

\newcommand{\calC}{\mathcal{C}}

\newcommand{\calL}{\mathcal{L}}
\newcommand{\frakg}{\mathfrak{g}}

\newcommand{\Sym}{\mathbb{S}}
\newcommand{\Alt}{\mathbb{A}}
\newcommand{\SL}{\mathrm{SL}}
\newcommand{\PSL}{\mathrm{PSL}}

\newcommand{\PSU}{\mathrm{PSU}}

\newcommand{\Sz}{\mathrm{Sz}}

\newcommand{\GL}{\mathrm{GL}}

\newcommand{\Tr}{\mathrm{Tr}}
\newcommand{\ad}{\mathrm{ad}}

\numberwithin{equation}{section}

\newtheorem{theorem}{Theorem}[section]
\newtheorem{lemma}[theorem]{Lemma}
\newtheorem{proposition}[theorem]{Proposition}

\newtheorem{remark}[theorem]{Remark}
\newtheorem{question}[theorem]{Question}
\newtheorem{conjecture}[theorem]{Conjecture}

\theoremstyle{definition}
\newtheorem{definition}[theorem]{Definition}

\title[Non-degeneracy of Killing forms]{Non-degeneracy of Killing forms on real conjugacy classes of finite groups}

\address{Department of Mathematics and Data Science, Vrije Universiteit Brussel, Pleinlaan 2, 1050 Brussel, Belgium}

\author{Carsten Dietzel}
\email{carsten.dietzel@vub.be}

\author{Charlotte Roelants}
\email{charlotte.roelants@vub.be}

\begin{document}

\begin{abstract}
    Killing forms on finite groups arise as special cases of braided Killing forms on braided Lie algebras. If $\calC$ is a conjugation-stable subset of a finite group $G$, the Killing form on $\C\calC$ is given by $K_\calC(a,b) = |C_G(ab) \cap \calC|$ for $a,b \in \calC$. 
    It is conjectured in previous work by López Peña, Majid and Rietsch that $K_\calC$ is non-degenerate for any real conjugacy class $\calC$ in a finite simple group.
    
    In this article, we reformulate the conjecture and introduce combinatorial conditions - the \emph{1-element condition} and the \emph{2-element condition} - that are sufficient for non-degeneracy to hold. This allows us to prove the conjecture for simple groups of the form $\PSL_2(q)$ and certain conjugacy classes in the alternating and symmetric groups. Moreover, we verify computationally that every real conjugacy class in a simple group of order $\leq 10^9$ fulfills at least one of these two conditions, thereby significantly extending the computational evidence for the conjecture. This raises the question whether these conditions are satisfied by all conjugacy classes in finite simple groups.
\end{abstract}

\maketitle

\section{Introduction}
\label{sec:introduction}

Killing forms were introduced in the context of Lie algebras: for a Lie algebra $\frakg$ over a field $k$ with Lie bracket $[ -, -]$, and for each $x \in \frakg$, $\ad_x: \frakg \to \frakg; z \mapsto [x,z]$ denotes the adjoint endomorphism of $x$ on $\frakg$.
The \textit{Killing form} is then the bilinear form given by
\[
K : \frakg \times \frakg \longrightarrow k, (x,y) \mapsto \Tr(\ad_x \circ \ad_y),
\]
with $\Tr$ denoting the trace map.

If this form is invertible, it is said to be \textit{non-degenerate}, and \textit{degenerate} otherwise.
The study of non-degeneracy of Killing forms is at least partially motivated by Cartan's criterion (see \cite{Cartan}), which states that a finite-dimensional Lie algebra over a field of characteristic zero is semisimple if and only if its associated Killing form is non-degenerate.

In order to generalize the Lie algebra of a Lie group to certain Hopf algebras, the notion of Lie algebra was extended in \cite{Majid_braided} to that of a braided Lie algebra $(\calL, \Delta, \epsilon, [-,-])$. The author also introduced braided Killing forms on braided Lie algebras. For $x,y \in \calL$, this is defined as the braided trace of the map $[x, [y, -] ] \in \mathrm{End}_k(\calL)$. We refer to \cite{Majid_braided} for more details.

A prominent family of braided Lie algebras can be constructed from subsets of finite groups, as proposed in \cite{LopezPenaMajidRietsch}. In these cases, the Killing forms can be described in purely group-theoretic terms and have a close relation to character theory. Motivated by Cartan's criterion, this construction thus provides a particularly interesting class of examples to investigate the non-degeneracy of braided Killing forms on braided Lie algebras.

Concretely, let $\calC \subset G \setminus \{1\}$ be a conjugation-stable subset of a finite group $G$ and consider $\calL = \C \calC$. Then $\calL$ can be viewed as an object in the braided monoidal category of $\C$-vector spaces with the trivial braiding operator and the unit object $\C$. There is a unique structure of a coalgebra on $\calL$ whose coproduct satisfies $\Delta(a) = a \otimes a$ and whose counit satisfies $\epsilon(a) = 1$, whenever $a \in \calC$.
The bilinear extension of the assignment $[a,b] = {}^ab := aba^{-1}$ ($a,b \in \calC$) then defines a bracket operation on $\calL$ such that $\calL$ satisfies the axioms of a braided Lie algebra (for more details, see \cite[Section 2.7]{beggs_majid} and \cite{LopezPenaMajidRietsch, Majid_braided}).

\begin{definition} \label{def:killing_form}
    Let $G$ be a finite group and $\calC \subset G \setminus\{1\}$ a subset stable under conjugation by elements of $G$. The \textit{Killing form} on $\calC$ is the braided Killing form of the braided Lie algebra $(\calL, \Delta, \epsilon, [-,-])$ as defined above. For $a,b \in \calC$, this is given by
    \[
    K_\calC(a,b) = \Tr_\calL( [a, [b, -] ]) = |\{x \in \calC : {}^{ab}x = x\}| = |C_G(ab) \cap \calC|,
    \]
    The Killing form is then the extension of $K_\calC$ to a bilinear map $\C\calC \times \C\calC \to \C$.
\end{definition}

\smallskip 

It is asked in \cite{LopezPenaMajidRietsch} when the Killing form on a conjugation-stable subset $\calC$ is non-degenerate. In particular, the authors pose the following conjecture on Killing forms of real conjugacy classes. Recall that a conjugacy class is said to be \textit{real} if it is stable under inversion.

\begin{conjecture} \label{conj:non-degeneracy}
    Let $\calC$ be a non-trivial real conjugacy class in a finite non-abelian simple group. Then the Killing form on $\calC$ is non-degenerate.
\end{conjecture}

They supported this conjecture with computer calculations for groups up to order 75,000. 
However, evidence for infinite families of simple groups, or even for specific families of conjugacy classes, is still quite limited.
In \cite{LopezPenaMajidRietsch}, it was proven that for the conjugacy class of transpositions in a symmetric group, the corresponding Killing form is non-degenerate. In \cite{PitermanRoelants}, non-degeneracy was also verified for the conjugacy classes of involutions in $\PSL_2(2^n), \ \PSU_3(2^n)$ and $\Sz(2^{2n+1})$ with $n \geq 1$, the groups of Lie type and Lie rank one in characteristic two.

\smallskip

The goal of this article is to develop new methods that allow us to extend the computational support for \autoref{conj:non-degeneracy}, and to prove the conjecture for the family of groups $\PSL_2(q)$. Furthermore, we apply our methods to certain conjugacy classes in $\Alt_n$ and $\Sym_n$.
We summarize the main results of this article below.

Our main tools to confirm non-degeneracy of $K_\calC$ for a real conjugacy class $\calC$ are the following two sufficient conditions, which we refer to as the \emph{1-element condition} and \emph{2-element condition} respectively.
\begin{align}
        & \text{There exist } x,z \in \calC \text{ such that there is a unique } y \in \calC \text{ for which } {}^y x = z. \label{eq:1EC_conj} \tag{1EC} \\
        &\text{There exist } x, y, a_0, a_1 \in \calC \text{ such that} \label{eq:2EC_conj} \tag{2EC} \\
        &\text{(1) For all } z \in \calC, \text{ we have the equivalence } {}^zx = y \Leftrightarrow z \in \{ a_0,a_1 \}. \notag \\
        &\text{(2) There exists } g \in G \text{ of odd order such that } ^ga_0 = a_1. \notag
\end{align}

Observe that the 2-element condition specializes to the 1-element condition in the case when $a_0 = a_1$, as then the identity $g = e$ has odd order and satisfies ${}^ga_0 = a_1$. 
However, we decided to formulate \eqref{eq:1EC_conj} as a distinct condition here because in plenty of cases, it is sufficient to check for \eqref{eq:1EC_conj}, which can be verified more efficiently than $\eqref{eq:2EC_conj}$, which is more technical.

\begin{theorem}
\label{prop:reduction_summary}
    Let $G$ be a finite group and $\calC$ a real conjugacy class of $G$.
    If condition \eqref{eq:1EC_conj} or \eqref{eq:2EC_conj} is fulfilled, then $K_\calC$ is non-degenerate.
\end{theorem}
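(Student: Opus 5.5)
We will rewrite $K_\calC$ as a Gram matrix, $K_\calC = P\,AA^{T}$ with $P$ a permutation matrix and $A$ a $0/1$ incidence matrix. Non-degeneracy then reduces to showing that $A$ has trivially intersecting left kernel. Each of \eqref{eq:1EC_conj} and \eqref{eq:2EC_conj} kills one coordinate of every vector in that kernel, and transitivity of the conjugation action of $G$ on $\calC$ spreads this to all coordinates.

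\emph{Step 1: factorization.} For $a,b\in\calC$ we have ${}^{ab}x=x$ if and only if ${}^{b}x={}^{a^{-1}}x$. Define the real matrix $A$ with rows indexed by $\calC$ and columns indexed by $\calC\times\calC$ by
\[
A_{b,(x,z)} = \begin{cases} 1 & \text{if } {}^bx=z,\\ 0 & \text{otherwise.}\end{cases}
\]
Then
\[
\sum_{(x,z)}A_{a^{-1},(x,z)}A_{b,(x,z)} = |\{x\in\calC : {}^{a^{-1}}x={}^bx\}| = K_\calC(a,b).
\]
Since $\calC$ is real, $a\mapsto a^{-1}$ is a permutation of $\calC$. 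Writing $P$ for the corresponding permutation matrix, we get $K_\calC=PAA^{T}$. As $A$ is real, $AA^{T}$ is invertible if and only if the rows of $A$ are linearly independent. Equivalently, the space
\[
W=\Bigl\{\lambda\in\C^{\calC} : \sum_b\lambda_bA_{b,(x,z)}=0 \text{ for all } (x,z)\Bigr\}
\]
must be zero. Here $\sum_b\lambda_bA_{b,(x,z)}=\sum_{b:\,{}^bx=z}\lambda_b$.

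\emph{Step 2: $G$-stability.} Let $G$ act on $\C^{\calC}$ by $(g\lambda)_b=\lambda_{g^{-1}bg}$. If $\lambda\in W$, then for every pair $(x,z)$,
\[
\sum_{{}^bx=z}(g\lambda)_b=\sum_{{}^{b'}(g^{-1}xg)=g^{-1}zg}\lambda_{b'}=0,
\]
using the substitution $b=gb'g^{-1}$. So $W$ is $G$-stable. In particular, every linear relation among the coordinates of $\lambda$ that holds for all $\lambda\in W$ also holds after conjugating all indices by any $h\in G$.

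\emph{Step 3: conclusion under each condition.} Under \eqref{eq:1EC_conj}, the column $(x,z)$ gives $\lambda_y=0$ for every $\lambda\in W$. By Step 2, $\lambda_{hyh^{-1}}=0$ for all $h\in G$. Since $\calC$ is a single conjugacy class, this gives $\lambda=0$.

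Under \eqref{eq:2EC_conj}, the column $(x,y)$ gives $\lambda_{a_0}+\lambda_{a_1}=0$ on $W$, hence $\lambda_{{}^ha_0}+\lambda_{{}^ha_1}=0$ for all $h\in G$. Take $h=g^k$, where $g$ has odd order $n$ and ${}^ga_0=a_1$. Then ${}^{g^k}a_1={}^{g^{k+1}}a_0$, so
\[
\lambda_{{}^{g^{k}}a_0}=-\lambda_{{}^{g^{k+1}}a_0}.
\]
Iterating $n$ times gives $\lambda_{a_0}=(-1)^n\lambda_{{}^{g^n}a_0}=-\lambda_{a_0}$, so $\lambda_{a_0}=0$, and transitivity again gives $\lambda=0$.

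The only delicate point is Step 1, namely choosing the right indexing so that reality of $\calC$ converts $K_\calC$ into a Gram matrix up to a permutation. Once that is in place, the odd-order sign argument in Step 3 is the only place where the hypothesis on $g$ is used.
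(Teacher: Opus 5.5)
Your proof is correct and is essentially the paper's argument. The rows of your matrix $A$ are the flattened permutation matrices $P_b$, so $AA^T$ is the Gram matrix behind \autoref{lem:reformulation_linearindep}, with your row permutation $P$ playing the role of $K'_\calC$, and your $W$ is the space of relations $\sum_c\lambda_cP_c=0$; Steps~2--3 are the kernel-side form of \autoref{lem:shifting_Cyx} and of the alternating-sum argument in \autoref{lm:reduction1el} and \autoref{lm:reduction2el} (when $a_0=a_1$, the column $(x,y)$ gives $\lambda_{a_0}=0$ directly, which changes nothing).
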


Using these conditions, we provide the first proof of Conjecture \ref{conj:non-degeneracy} for an infinite family of finite simple groups, namely $\PSL_2(q)$.

\begin{theorem} \label{thm:PSL2_all_nondeg}
    Let $\calC$ be a real conjugacy class in $\PSL_2(q)$, with $q$ a prime power.
    Then the Killing form $K_\calC$ is non-degenerate.
\end{theorem}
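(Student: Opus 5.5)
By Theorem~\ref{prop:reduction_summary}, it suffices to show that every non-trivial real conjugacy class $\calC$ of $\PSL_2(q)$ satisfies \eqref{eq:1EC_conj} or \eqref{eq:2EC_conj}. Both conditions only involve the fibres of the map $\calC \to \calC$, $y \mapsto {}^y x$, for fixed $x \in \calC$. Given $x,z \in \calC$, the set of solutions $y \in \calC$ of ${}^y x = z$ is a coset $y_0 C_G(x)$ intersected with $\calC$. So we want a pair $(x,z)$ for which the coset $y_0C_G(x)$ meets $\calC$ in exactly one point, or in exactly two points that are conjugate by an element of odd order. The small cases $q \le 5$ (and $q=9$ if needed) I would check directly, or by reference to the computations, so I assume $q$ is large enough to avoid exceptional isomorphisms.

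I would split according to the type of $\calC$. The real classes of $\PSL_2(q)$ are of three kinds. The \emph{split semisimple} classes have centralizer a split torus (cyclic of order $(q-1)/d$) or, for involutions with $q \equiv 1 \bmod 4$, a dihedral group. The \emph{non-split semisimple} classes have centralizer a non-split torus of order $(q+1)/d$, or dihedral for involutions with $q\equiv 3 \bmod 4$. The \emph{unipotent} classes are real precisely when $q\equiv 1 \bmod 4$ or $q$ is even. In each case I would take $x$ with an explicit matrix representative and $C_G(x)$ a torus $T$ (or $U$, or a dihedral group), and choose $y_0 \in \calC$. Then I would count the $t\in C_G(x)$ such that $y_0 t \in \calC$. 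For a torus element $x=\mathrm{diag}(\lambda,\lambda^{-1})$ and $t=\mathrm{diag}(\mu,\mu^{-1})$, the condition $y_0t\in\calC$ is a trace condition $\Tr(y_0 t)=\pm\Tr(x)$. For $y_0=\begin{pmatrix} a&b\\ c&d\end{pmatrix}$ this reads $a\mu+d\mu^{-1}=\pm(\lambda+\lambda^{-1})$, which is a quadratic equation in $\mu$. Hence there are at most four solutions $\mu$, and at most two after passing to $\PSL$. The whole problem thus reduces to choosing $y_0$ (equivalently $a,d$, subject to $a+d=\pm\Tr x$ and $ad-bc=1$) so that this quadratic has a double root, giving \eqref{eq:1EC_conj}, or exactly two roots that are swapped by conjugation by an element of odd order. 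The non-split torus and the unipotent case are handled the same way, using the norm form over $\F_{q^2}$ and the upper triangular group respectively.

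The key step is the choice of parameters. To get a unique solution, I would make the discriminant $(\lambda+\lambda^{-1})^2-4ad$ vanish. Together with $a+d=\lambda+\lambda^{-1}$, this forces $a=d$, so the double root is $\mu=1$, that is $t=1$. For the non-split case one needs the analogous discriminant to be a norm, which I expect to be always solvable by counting points on the relevant conic. For $q$ even there is no $\pm$ ambiguity, and the argument should go through directly with \eqref{eq:1EC_conj}.

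The main obstacle will be the involution classes when $q$ is odd, where $C_G(x)$ is dihedral of order $q\mp1$. There the fibres tend to have size two, with the two solutions related by an element of $C_G(x)$ of even order, so \eqref{eq:1EC_conj} can fail. Here I would aim for \eqref{eq:2EC_conj}. I would look for a pair $(x,y)$ whose solution set $\{a_0,a_1\}$ consists of two involutions with $a_0a_1$ of odd order, and then take $g=a_0a_1$ or a suitable power of it. Since two involutions generate a dihedral group, $a_0$ and $a_1$ are conjugate by an element of odd order whenever their product has odd order. Finding such a pair is an arithmetic condition on elements of $\F_q$, and I would settle it by a counting or character-sum argument for large $q$.
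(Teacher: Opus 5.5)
\textbf{Gap 1: the classes where \eqref{eq:1EC_conj} is impossible.} Your double-root strategy fails in exactly the two families where the paper works hardest, and in both your plan gives no argument. Write $s=\Tr(x)$.

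For $q$ even, a double root needs $s=0$, since your condition $a=d$ means $s=a+d=0$. For split classes this is easily repaired by making the equation linear: take $d=0$, e.g.\ $y_0=\left(\begin{smallmatrix} s&1\\ 1&0\end{smallmatrix}\right)$, as the paper does. For \emph{non-split} classes with $q$ even, however, \eqref{eq:1EC_conj} is impossible, contrary to your claim that it ``should go through directly''. The map $t\mapsto\Tr(y_0t)$ is $\F_q$-linear on $\F_q[x]\cong\F_{q^2}$. Identifying $C_G(x)$ with $\{\mu\in\F_{q^2}:\mu^{q+1}=1\}$ gives $\Tr(y_0t)=\beta\mu+\beta^q\mu^{-1}$ for some $\beta$ with $\beta+\beta^q=s$. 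The condition $\Tr(y_0t)=s$ then factors as $(\mu-1)(\beta\mu-\beta^q)=0$. The second root $\beta^{q-1}$ always has norm one, and it equals $1$ only if $\beta\in\F_q$, i.e.\ $s=2\beta=0$. So every fibre has exactly two points; this is already visible for the $5$-cycles in $\Alt_5\cong\PSL_2(4)$.

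The same happens for unipotent classes with $q\equiv1\pmod 4$. For $y_0$ with lower-left entry $c\neq0$, the equation $\Tr(y_0t)=2+cu=\pm2$ has the two solutions $u=0$ and $u=-4/c$, and both land in $\calC$.

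In both families everything therefore rests on exhibiting an element of \emph{odd} order conjugating $a_0$ to $a_1$. This is the genuinely new input of the paper, and it is absent from your plan. In the unipotent case the obvious conjugator $J$ is an involution, but $JB_i$ (with $B_i$ lower unitriangular, hence in $C_G(a_0)$) is unipotent and so has odd order $p$. In the even non-split case, a conjugator $h$ of even order must be an involution. Then $ha_0$ also conjugates $a_0$ to $a_1$. If it were an involution too, one would get $a_1a_0=(ha_0)^2=1$. The paper excludes this by its choice of $a_0$, since otherwise the odd-order element $a_0$ would lie in $\langle a_0^2\rangle\le C_G(x)$.

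\textbf{Gap 2: involutions with $q$ odd.} Here you aim at the wrong condition, and your fallback cannot work in general. Since $a_1\in a_0C_G(x)$ and $a_0^2=1$, the product $a_0a_1$ lies in $C_G(x)$. This group is dihedral of order $q-\epsilon$ with $q\equiv\epsilon\pmod 4$, hence a $2$-group for $q=17,31,127,257,\dots$. For those $q$, $a_0a_1$ has even order whenever $a_0\neq a_1$.

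In fact \eqref{eq:1EC_conj} holds. Take $x=\left(\begin{smallmatrix}0&1\\-1&0\end{smallmatrix}\right)$ and $y_0=\left(\begin{smallmatrix}\alpha&-1-\alpha^2\\1&-\alpha\end{smallmatrix}\right)$.
\begin{enumerate}
\item A rotation $t=\left(\begin{smallmatrix}u&v\\-v&u\end{smallmatrix}\right)$ gives $\Tr(y_0t)=(\alpha^2+2)v$. If $\alpha^2\neq-2$, this vanishes only for $t=\pm I$.
\item A reflection $t=\left(\begin{smallmatrix}u&v\\v&-u\end{smallmatrix}\right)$ with $u^2+v^2=-1$ gives $\Tr(y_0t)=2\alpha u-\alpha^2v$. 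For $\alpha\neq0$ this vanishes only if $(1+\alpha^2/4)v^2=-1$.
\end{enumerate}
Choosing $\alpha\neq0$ with $\alpha^2\neq-2$, and with $\alpha^2+4$ and $-1$ of opposite quadratic character, rules out the second case. Such $\alpha$ exists for $q\ge5$ by the count of $\beta$ with $\beta^2+1$ a square.

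\textbf{What is sound.} For split non-involutory classes with $q$ odd, a diagonal $x$ and $y_0$ with both diagonal entries $s/2$ give $\Tr(y_0t)=\tfrac s2(\mu+\mu^{-1})=\pm s$ only for $\mu=\pm1$. This is simpler than the paper's condition $\delta(\gamma^2+1)=\alpha^2$. In the non-split odd case, the solvability you expect, namely $\beta=s/2\in\F_q$, is exactly what the paper's choice of $\gamma$ with $\gamma^2+1$ a non-square provides. For unipotent classes with $q$ even, $\Tr(y_0t)=cu$ gives \eqref{eq:1EC_conj} at once, whereas the paper cites earlier work.

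\textbf{Small cases.} Checking small cases directly will not confirm $q=2,3$. In $\PSL_2(3)\cong\Alt_4$ the involution class has a rank-one Killing form, and in $\PSL_2(2)\cong\Sym_3$ so does the class of $3$-cycles. The statement is really about $q\ge4$.
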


Next, we investigate conjugacy classes of involutions in the symmetric groups $\Sym_n$ and the alternating groups $\Alt_n$, thus building on the case of transpositions in $\Sym_n$ that has been dealt with in \cite{LopezPenaMajidRietsch}.

\begin{theorem} \label{thm:symm_involutions_summary}
    Let $\calC$ be a conjugacy class of involutions in $\Sym_n$ or $\Alt_n$, with $n \geq 2$. 
    Then the Killing form $K_\calC$ is non-degenerate, unless $n=4$ and $\calC$ is the conjugacy class of $(1 \ 2)(3 \ 4)$.
\end{theorem}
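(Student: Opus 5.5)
We will apply \autoref{prop:reduction_summary}. Every class of involutions is real, so it suffices to verify \eqref{eq:1EC_conj} or, failing that, \eqref{eq:2EC_conj} for each such class.

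\textbf{Reduction to $\Sym_n$.} Both conditions, like the Killing form itself, only involve elements of $\calC$ acting on $\calC$ by conjugation; the ambient group $G$ enters only through condition (2) of \eqref{eq:2EC_conj}. A class of $\Sym_n$ splits in $\Alt_n$ only when its cycle type consists of distinct odd lengths. An involution in $\Alt_n$ has an even, hence positive, number of $2$-cycles. Its cycle type therefore contains the even length $2$ and never meets this criterion, so no involution class of $\Alt_n$ splits. Hence every involution class of $\Alt_n$ is also a class of $\Sym_n$, and \eqref{eq:1EC_conj} is literally the same statement for both groups. For \eqref{eq:2EC_conj} I would choose $g$ of odd order, which is automatically even and so lies in $\Alt_n$. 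It therefore suffices to treat $\calC = \calC_k$, the class of products of $k$ disjoint transpositions in $\Sym_n$, with $1 \le k \le n/2$.

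\textbf{Reformulation of \eqref{eq:1EC_conj}.} If $z = {}^{y_0}x$ with $y_0 \in \calC$, then the $y \in \calC$ with ${}^yx = z$ are exactly the elements of $y_0C_{\Sym_n}(x) \cap \calC$. So we want $x, y_0 \in \calC_k$ such that $y_0$ is the only element of its coset $y_0 C(x)$ that is again a product of $k$ disjoint transpositions. Geometrically, $y$ must map the set of $x$-orbits (the $k$ transposed pairs and the $f = n-2k$ fixed points) onto the corresponding set for $z$. The idea is to pick $x$ and $y_0$ whose supports overlap in a "chain", so that this requirement pins down each transposition of $y$.

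\textbf{Model case and general construction.} For $k=1$, take $x = (1\,2)$, $z = (1\,3)$, $y_0 = (2\,3)$. Any $y$ must map $\{1,2\}$ to $\{1,3\}$, and a single transposition doing this must be $(2\,3)$; so the $y$ is unique. For general $k$ I would take
\[
x = (1\,2)(3\,4)\cdots(2k-1\,\,2k)
\]
and let $y_0$ be a "shifted" matching, for instance $y_0 = (2\,3)(4\,5)\cdots(2k\,\,2k+1)$ when $f \ge 1$. Then $x$ and $y_0$ together trace out a path on $\{1,\dots,2k+1\}$, and $z = {}^{y_0}x$. A candidate $y$ must carry the path structure of $\langle x\rangle$-orbits onto that of $z$ while itself being a fixed-point pattern with exactly $k$ transpositions. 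Counting moved points should then force $y$ to move exactly the points $y_0$ moves, and induction along the path should force $y = y_0$. I expect this to handle all cases with $f \ge 1$, perhaps with a few small exceptions that can be checked by hand or by computer.

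\textbf{Main obstacle.} The hard case is the fixed-point-free one, $n = 2k$. There any $y$ can be composed with symmetries of the matching structure, and $n = 4$ is exactly where everything fails. For that case the closed path above becomes a $2k$-cycle (alternating $x$ and $y_0$ edges), which has a dihedral symmetry, so I expect \eqref{eq:1EC_conj} to fail generally. My first attempt there would be a mixed configuration: $x$ and $y_0$ overlapping along a path on some pairs and sharing identical transpositions elsewhere. Then $y$ is pinned except for a two-fold ambiguity $\{a_0, a_1\}$ interchanged by a $3$-cycle, which would give \eqref{eq:2EC_conj}. Small values of $k$, such as $n = 6, 8$, would be verified directly. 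For $n = 4$, $k = 2$, one observes that $\calC \cup \{1\}$ is the abelian Klein four-group, so all of $\calC$ centralizes $ab$ for every $a,b \in \calC$. Thus $K_\calC$ is the all-$3$ matrix, which is degenerate, accounting for the stated exception.
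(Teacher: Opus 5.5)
The reduction to $\Sym_n$ and the treatment of $n=4$ are fine. The case with fixed points also works with your configuration, even though the paper uses the shifted matching as the target rather than as the conjugator. Take $y_0=(2\,3)(4\,5)\cdots(2k\,\,2k+1)$, so that $z={}^{y_0}x=(1\,3)(2\,5)(4\,7)\cdots(2k-2\,\,2k+1)$. Any $y\in\calC$ with ${}^yx=z$ maps $\{2k+1,\dots,n\}$ onto $\{2k,2k+2,\dots,n\}$. Since $y$ is an involution, it must contain $(2k\,\,2k+1)$. The $z$-pairs $\{2i-2,\,2i+1\}$ then force $(2i-2\,\,2i-1)$ into $y$ for $i=k,\dots,2$, which uses up all $k$ transpositions. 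The genuine gap is the fixed-point-free case $n=2k$, $k\geq 3$. You call it the main obstacle, but you give no concrete configuration, no computation of $\calC_{z,x}$ and no odd-order element. Both of your guesses about this case are also wrong.

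The observation you are missing is that any $c\in\calC$ with ${}^cx=y$ is an involution. So $c$ maps the matching $M_x$ of $x$ onto $M_y$, and also maps $M_y$ back onto $M_x$. Hence $c$ is a colour-swapping automorphism of $M_x\cup M_y$, which is a disjoint union of alternating cycles of lengths $2m$ (digons for shared transpositions). On one such cycle with $m\geq 2$, the colour-swapping automorphisms are the odd rotations and the reflections through two opposite vertices. The reflections have fixed points, and the only involutive rotation is the half-turn, which swaps colours iff $m$ is odd. On a digon only the transposition is a fixed-point-free involution. So the dihedral symmetry forces uniqueness rather than destroying it. For $k$ odd, $x=(1\,2)\cdots(2k-1\,\,2k)$ and $y=(2\,3)\cdots(2k\,\,1)$ give $\calC_{y,x}=\{(1\,\,k+1)(2\,\,k+2)\cdots(k\,\,2k)\}$. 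For even $k\geq 4$ one uses the digon $(n-1\,\,n)$ plus an alternating $(2k-2)$-cycle, which is exactly the paper's argument.

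The same count shows your fallback cannot work. A component type of half-length $m$ occurring $r$ times contributes a factor that is $0$ or $1$ if $r=1$, and $0$ or at least $3$ if $r\geq 2$ (for $r=2$ it is $2m+1$ or $2m$). So $|\calC_{y,x}|$ is never $2$, and \eqref{eq:2EC_conj} with $a_0\neq a_1$ never occurs in this class. Your ``mixed configurations'' also contain no paths, only cycles and digons. Only \eqref{eq:1EC_conj} is available here. It holds exactly when the components have pairwise distinct odd half-lengths, which is possible iff $k\neq 2$.
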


Furthermore, we study conjugacy classes of cycles of odd length in the alternating and symmetric groups. Our main result concerning these cases is the following.

\begin{theorem} \label{thm:alt_cycles_summary}
    For $n \geq 5$, let $\calC$ be a real conjugacy class of cycles of odd length $m$ in $\Sym_n$ or $\Alt_n$, where $n \geq m \geq 3$.
    Then $K_{\calC}$ is non-degenerate.
\end{theorem}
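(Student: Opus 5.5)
The plan is to deduce the statement from \autoref{prop:reduction_summary}. For each case I would exhibit explicit $m$-cycles $x,z\in\calC$ satisfying \eqref{eq:1EC_conj}, and fall back on \eqref{eq:2EC_conj} only where uniqueness fails. The basic observation is the following. The elements $g\in\Sym_n$ with ${}^gx=z$ form a coset $g_0C_{\Sym_n}(x)$, where $C_{\Sym_n}(x)=\langle x\rangle\times\Sym(\{1,\dots,n\}\setminus\operatorname{supp}x)$. Such a $g$ is determined on $\operatorname{supp}x$ up to the $m$ rotations of the cycle, and is an arbitrary bijection $\{1,\dots,n\}\setminus\operatorname{supp}x\to\{1,\dots,n\}\setminus\operatorname{supp}z$. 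We must count how many elements of this coset are themselves $m$-cycles lying in $\calC$. An $m$-cycle moves exactly $m$ points, so choosing $\operatorname{supp}x$ and $\operatorname{supp}z$ with large overlap sharply restricts the possibilities.

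Concretely, with $x=(1\ 2\ \cdots\ m)$, I would try $z$ supported on $\{1,\dots,m\}$ when $n=m$ or $n=m+1$, and $z=(1\ 2\ \cdots\ m-1\ \ m+1)$ (or a small reordering of it) when $n\ge m+1$. In the second choice, a conjugator $y$ must send $\{1,\dots,m\}$ onto $\{1,\dots,m-1,m+1\}$, so it moves $m$ to $m+1$ outside the common part. I would then show that $y$ fixes every point outside $\{1,\dots,m+1\}$, since otherwise it moves too many points. This leaves at most $m$ candidates on $\{1,\dots,m+1\}$, one per rotation. For each rotation $c$, I would write down the permutation and determine its cycle type, expecting exactly one value of $c$ to give an $m$-cycle. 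If the naive $z$ gives several $m$-cycle conjugators, I would modify $z$ by reordering its entries until only one survives. This is a finite check on $m+1$ points, uniform in $n$.

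For $\Alt_n$ there are two extra points to check. First, the conjugators must lie in $\Alt_n$; this is automatic, since $m$ is odd and so $m$-cycles are even. Second, when $\calC$ is a class of $\Alt_n$ that is a proper subset of the $m$-cycles (the split cases $n\in\{m,m+1\}$, where reality restricts $m$), I need $x,z,y$ all in the same $\Alt_n$-class. Here I would use that $x$ and ${}^yx$ are automatically in the same $\Alt_n$-class because $y\in\Alt_n$. I would then check separately that the unique $y$ lies in $\calC$ rather than in the other half, possibly by replacing $x$ by a suitable power.

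I expect the main obstacle to be the case $n=m$ (and $n=m+1$ with split classes). There all conjugators live inside the affine-type normalizer of $\langle x\rangle$ on $\{1,\dots,m\}$, namely the maps $i\mapsto ki+c \bmod m$. Uniqueness may then genuinely fail, since several maps $i\mapsto ki+c$ with $k\neq 1$ can be $m$-cycles. In that case I would aim for \eqref{eq:2EC_conj}. I would choose $k$ so that exactly two such maps $a_0,a_1$ are $m$-cycles, and show that they are conjugate by an element of odd order, for instance a power of $x$ itself. Whether this count works out uniformly in $m$ is the step I am least sure of. If it does not, I would handle small $m$ (and $n=5,6,7$) by direct computation.
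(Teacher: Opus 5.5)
Your reduction to \eqref{eq:1EC_conj}/\eqref{eq:2EC_conj} is the same as the paper's. But it is the easy part: the proof consists of exhibiting, for every odd $m$ and every $n\ge m$, elements $x,z$ with $|\calC_{z,x}|\in\{1,2\}$. Neither of your concrete sketches produces these, so there is a genuine gap.

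\textbf{The case $n\ge m+1$.} Take $z=(1\ 2\ \cdots\ m-1\ \ m+1)$. Your claim that a conjugator must fix every point outside $\{1,\dots,m+1\}$ is false. On $[m]$, a conjugator $y$ agrees with $x^c$, except that $m-c$ is sent to $m+1$ instead of $m$.
\begin{itemize}
\item If $c\neq0$, then $x^c$ has $\gcd(c,m)$ cycles of length $m/\gcd(c,m)$ on $[m]$. Inserting $m+1$ (and whatever follows it) lengthens one of these cycles. So $y$ either has a cycle of length $>m$ or has at least two nontrivial cycles; it is never an $m$-cycle.
\item If $c=0$, then $y$ fixes $1,\dots,m-1$ and must run $m\mapsto m+1\mapsto\cdots\mapsto m$ through $m-2$ further outside points, e.g.\ $y=(m\ \ m+1\ \ m+2\ \cdots\ 2m-1)$.
\end{itemize}
Hence $\calC_{z,x}=\emptyset$ for $n\le 2m-2$. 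For $n\ge 2m-1$ it has $(n-m-1)!/(n-2m+1)!\ge 6$ elements when $m\ge5$. Your fallback, ``reorder $z$ until only one survives'', is exactly the combinatorial construction the proof needs, and you give no candidate for it. Also, once the supports of $x$ and $z$ differ, the count depends on $n$, so uniformity in $n$ is not automatic.

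\textbf{The case $n\in\{m,m+1\}$.} Your premise is wrong here. The conjugators from $x$ to an $m$-cycle $z$ on $[m]$ form a coset $g_0\langle x\rangle$. This lies in the affine group $\{i\mapsto ki+c\}$ only when $z\in\langle x\rangle$.

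Suppose you do restrict to $z=x^k$. By the Hull--Dobell criterion, $i\mapsto ki+c$ is an $m$-cycle iff $\gcd(c,m)=1$ and $k\equiv 1$ modulo every prime dividing $m$. For prime $m$ and $k\neq1$ this is simply the fact that the map has the fixed point $c/(1-k)$. So for the class of $m$-cycles in $\Sym_n$, with $n\in\{m,m+1\}$ and $m\ge5$, you get $|\calC_{x^k,x}|\in\{0,\varphi(m)\}$ with $\varphi(m)\ge4$. Neither condition is then attainable.

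In $\Alt_5$ the only nonempty choice is $z=x$, giving $\calC_{x,x}=\{x,x^{-1}\}$. Every element conjugating $x$ to $x^{-1}$ is a reflection $i\mapsto c-i$, hence an involution. So part (2) of \eqref{eq:2EC_conj} fails. The obstruction gets worse as $m$ grows, so computing small cases does not repair it.

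\textbf{The missing idea.} The paper takes $z={}^yx$ for a non-affine $m$-cycle $y$ with the same support $[m]$ as $x$.
\begin{itemize}
\item Any $m$-cycle in $yC_G(x)$ is then automatically trivial off $[m]$, for every $n\ge m$. So uniformity in $n$ comes for free.
\item It remains to show that $yx^\gamma$ is not an $m$-cycle for $\gamma\neq0$. The paper does this with the explicit permutation $\pi_m$. A symmetric pair $a+b=\mathrm{const}$, plus two ad hoc checks, shows that $\pi_m x^{\gamma}$ has a fixed point or a $2$-cycle for every $\gamma\neq 0$.
\item The cases $m=3,5,7$ are treated by hand.
\item For the split real classes ($m\equiv1\pmod 4$, $n\in\{m,m+1\}$), the paper computes that the permutation conjugating $x$ to $\pi_m$ has sign $(-1)^{(m-5)/2}=1$. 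This guarantees $y\in\calC$.
\end{itemize}
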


Lastly, we verify computationally that \eqref{eq:1EC_conj} or \eqref{eq:2EC_conj} hold for all real conjugacy classes in simple groups of order up to $10^9$.

Although \eqref{eq:1EC_conj} and \eqref{eq:2EC_conj} are not necessary conditions for $K_\calC$ to be non-degenerate, we observe that at least one of them still holds true for every real conjugacy class in a simple group of order $\leq 10^9$. To the best of the authors’ knowledge, this combinatorial phenomenon in finite simple groups has not yet been addressed in the literature and might be of independent interest.

\smallskip

The paper is organized as follows.

In \autoref{sec:reformulation}, we reformulate the problem of checking non-degeneracy for a real conjugacy class $\calC$. We prove that for such a class, the Killing form $K_\calC$ is non-degenerate if and only if a certain set of permutation matrices, stemming from the conjugation representation of the group, is linearly independent. This is equivalent to a specific set of characteristic vectors generating the $\C$-vector space $\C\calC$. 
These reframings of the non-degeneracy problem are then applied to prove \autoref{prop:reduction_summary}.

Next, we investigate the groups $\PSL_2(q)$. In \autoref{sec:preliminaries}, we recall the description of conjugacy classes in the groups $\PSL_2(q)$. The proof of \autoref{thm:PSL2_all_nondeg} is then covered in Sections \ref{sec:semisimple_odd}, \ref{sec:unipotent_odd} and \ref{sec:semisimple_even}, as we consider the cases of semisimple elements for odd $q$, unipotent elements for odd $q$ and semisimple elements for even $q$ separately.

\autoref{sec:symm_involutions} addresses certain conjugacy classes in the symmetric and alternating groups. 
We study classes of involutions and classes of cycles of odd length and prove \autoref{thm:symm_involutions_summary} and \autoref{thm:alt_cycles_summary}. 

Finally, in \autoref{sec:computations}, we explain how the verification of \eqref{eq:1EC_conj} and \eqref{eq:2EC_conj} was implemented in \texttt{GAP} in order to confirm \autoref{conj:non-degeneracy} for non-abelian simple groups of order up to $10^9$. Based on our observations, we pose new questions related to the 1-element condition and the 2-element condition.

\subsection*{Acknowledgements}

We thank Kevin Piterman for a thorough reading and suggestions that helped to significantly improve the quality of the manuscript. Furthermore, we express our gratitude to Andrew Darlington and Edouard Feingesicht for fruitful discussions related to the research problem.

The first author is currently supported by the FWO Senior Postdoctoral Fellowship FWOTM1244.
The second author is currently supported by the FWO Senior Research Project G004124N.
This work was partially supported by the project OZR3762 of Vrije Universiteit Brussel.

\section{Reformulation of the problem}
\label{sec:reformulation}

For real conjugacy classes in a finite group $G$, we state a condition equivalent to non-degeneracy, which lies at the basis of \autoref{prop:reduction_summary}.

\smallskip

Suppose that $G$ is a finite group, and $\calC$ is a non-empty subset of $G \setminus \{ 1 \}$ that is stable under inversion and conjugation by $G$.
Define
\[
K'_\calC(a,b) = K_\calC(a,b^{-1}), \qquad a,b \in \calC.
\]
As $\calC$ is stable under inversion, the underlying matrix of $K'_\calC$ is obtained from $K_\calC$ by a re-ordering of the columns, therefore $K_\calC$ is non-degenerate if and only $K'_\calC$ is.

\smallskip

We now remark that $K_\calC$ can also be expressed in character theoretic terms as follows. Let $\rho: G \rightarrow \GL(\C\calC)$ be the conjugation representation of $G$ on $\calC$, where for $g \in G$,
\[
\rho_g: \C \calC \rightarrow \C\calC, \ \sum_{x \in \calC} \alpha_x x \mapsto \sum_{x \in \calC} \alpha_x \; ^g x.
\]
Let $\chi: G \rightarrow \C$ denote the character of $\rho$ and for $g \in G$, let $P_g$ be the matrix representation of $\rho_g$ with respect to the canonical basis of $\C\calC$. Then for $a,b \in \calC$,
\begin{equation} \label{eq:KC_character}
    K_\calC(a,b) = \Tr(P_{ab}) = \chi(ab).
\end{equation}
Note that for any $g \in G$, the representing matrix $P_g$ of $\rho_g$ is a permutation matrix. 
In particular, each $P_g$ is unitary, and thus $P_{g^{-1}} = P_g^{-1} = \overline{P}_g^T$.
Combining this with Equation \eqref{eq:KC_character}, we can rewrite $K'_\calC$ as
\[
K'_\calC(a,b) = \Tr(P_{ab^{-1}}) = \Tr(P_a P_b^{-1}) = \Tr(P_a \overline{P}_b^T).
\]

Let $V = M_n(\C)$ and consider the map 
\[
\mu: V \times V \longrightarrow \C, (A,B) \mapsto \Tr(A\overline{B}^T) = \sum_{i,j = 1}^n A_{ij} \overline{B}_{ij}.
\]
Note that for any $A \neq 0$, $\mu(A,A) > 0$. Thus $\mu$ is a positive definite, sesquilinear form and, in particular, non-degenerate.

\begin{lemma} \label{lem:reformulation_linearindep}
    Let $G$ and $\calC$ be as above. The following are equivalent:
    \begin{enumerate}
        \item $K_\calC$ is non-degenerate.
        \item $K'_\calC$ is non-degenerate.
        \item The set of permutation matrices $\{P_c : c \in \calC \}$ is linearly independent.
    \end{enumerate}
\end{lemma}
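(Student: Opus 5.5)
The equivalence of (1) and (2) has essentially been noted already. Since $\calC$ is stable under inversion, $b \mapsto b^{-1}$ is a permutation of $\calC$. The Gram matrix of $K'_\calC$ with respect to the basis $\calC$ is therefore obtained from that of $K_\calC$ by permuting columns. Column permutations preserve invertibility, so $K_\calC$ is non-degenerate exactly when $K'_\calC$ is.

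For (2) $\Leftrightarrow$ (3), I will identify $K'_\calC$ as a Gram matrix. The computation above gives
\[
K'_\calC(a,b) = \Tr(P_a \overline{P}_b^T) = \mu(P_a,P_b), \qquad a,b \in \calC .
\]
Hence the matrix $(K'_\calC(a,b))_{a,b\in\calC}$ is the Gram matrix of the family $(P_c)_{c\in\calC}$ with respect to the positive definite Hermitian form $\mu$ on $V=M_{|\calC|}(\C)$. The entries are real, so the bilinear extension and the sesquilinear Gram matrix are the same matrix; invertibility of $K'_\calC$ as a bilinear form is invertibility of this matrix.

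The remaining step is the standard fact that a Gram matrix $M=(\mu(v_i,v_j))_{i,j}$ for a positive definite form is invertible if and only if the $v_i$ are linearly independent. For the forward direction, suppose $\sum_c \alpha_c P_c = 0$. Then $\sum_c \alpha_c \mu(P_c,P_b) = 0$ for every $b$, so the vector $\alpha$ lies in the kernel of $M^T$. If $M$ is invertible, this forces $\alpha=0$, so the $P_c$ are independent. For the converse, suppose $M\beta = 0$, i.e.\ $\sum_b \mu(P_a,P_b)\beta_b=0$ for all $a$. Multiply by $\overline{\beta_a}$, sum over $a$, and use sesquilinearity to obtain
\[
\mu\Bigl(\sum_b \overline{\beta_b}P_b,\ \sum_b \overline{\beta_b} P_b\Bigr)=0 ,
\]
keeping track of which slot is conjugate-linear. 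Positive definiteness gives $\sum_b\overline{\beta_b}P_b=0$, and independence then gives $\beta=0$.

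There is no real obstacle here. The only care needed is bookkeeping of complex conjugations in the sesquilinear slot, and checking that the bilinear extension of $K'_\calC$ and the sesquilinear Gram matrix coincide on the basis; the latter holds because the values are real integers.
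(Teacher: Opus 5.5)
Your proof is correct and follows essentially the paper's route: both identify $K'_\calC(a,b)=\mu(P_a,P_b)$ and use the positive definite form $\mu$ to turn a kernel vector of $K'_\calC$ into a linear relation among the $P_c$, and conversely. Your explicit self-pairing step $\mu(W,W)=0$ with $W=\sum_b\overline{\beta_b}P_b$ is slightly more careful than the paper's appeal to ``non-degeneracy of $\mu$''. In the paper that vanishing is only tested against the span of the $P_b$, so positive definiteness is what is really needed there.
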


\begin{proof}
    The equivalence of the first two statements is discussed above.
    
    For the equivalence of (2) and (3), first note that $K'_\calC(a,b) = \mu(P_a, P_b)$.

    If $K'_\calC$ is degenerate, there exists $x = \sum_{a \in \calC} \lambda_a a \neq 0$ in $\C \calC$ such that $K'_\calC(x,y) = 0$ for all $y = \sum_{b \in \calC} \nu_b b \in \C\calC$.
    Thus
    \[
    0 = K'_\calC(x,y) = \sum_{a,b \in \calC} \lambda_a \nu_b K'_\calC(a,b) = \sum_{a,b \in \calC} \lambda_a \nu_b \mu(P_a, P_b) = \mu (\sum_{a \in \calC} \lambda_a P_a, \sum_{b \in \calC} \overline{\nu_b} P_b),
    \]
    for all $y = \sum_{b \in \calC} \nu_b b \in \C\calC$.
    Since $\mu$ is non-degenerate, this implies $\sum_{a \in \calC} \lambda_a P_a = 0$.

    On the other hand, suppose $\sum_{a \in \calC} \lambda_a P_a = 0$ with $\lambda_a \in \C$ not all equal to 0.
    Then for all $\sum_{b \in \calC} \nu_b b \in \C\calC$,
    \[
    0 = \mu \Big(\sum_{a \in \calC} \lambda_a P_a, \sum_{b \in \calC} \nu_b P_b \Big) = K'_\calC \Big(\sum_{a \in \calC} \lambda_a a, \sum_{b \in \calC} \overline{\nu_b} b \Big).
    \]
    Thus $K'_\calC$ is degenerate.
\end{proof}

\begin{remark}
    Recall that the adjoint representation of a finite group $G$ is given by $\ad : G \to \mathrm{End}_{\C}(\C G)$, $g \to \ad_g$, where $\ad_g(h) = {}^gh$ for $g,h \in G$. A group is said to have the \emph{Roth property} if the kernel of its adjoint representation is trivial, or equivalently, if the adjoint representation contains every irreducible representation of the group. The problem of which groups satisfy the Roth property was first studied in \cite{Passman,Roth} and solved for all finite simple groups in \cite{HSTZ,HeideZalesski}.

    Note that (3) can be checked to be equivalent to the condition that the kernel of the adjoint map $\ad: \C\calC \to \mathrm{End}_{\C}(\C\calC)$, $x \mapsto \ad_x = [x,-]$ is trivial and can therefore be seen as a variant of the Roth property. In the context of the non-degeneracy problem, it has been proven in \cite[Theorema 4.2]{LopezPenaMajidRietsch} that for a group $G$, the validity of the Roth property implies that $K_{\calC}$ is non-degenerate for $\calC = G \setminus \{ 1 \}$. We can thus view \autoref{lem:reformulation_linearindep} as a variation of this earlier result.
\end{remark}

In the following, we associate with a subset $S \subseteq \calC$ the element $\1_S = \sum_{x \in S} x \in \C\calC$ - the \emph{characteristic vector} of the subset $S$.

Suppose $\lambda_c$  ($c \in \calC$) are coefficients in $\C$ such that $\sum_{c \in \calC} \lambda_c P_c = 0$.
For $x,y \in \calC$, we define the set 
\[
\calC_{y,x} = \{c \in \calC : \; ^c x = y\} = \{c \in \calC : (P_c)_{yx} = 1\}, 
\]
where $(P_c)_{yx}$ denotes the entry on position $(y,x)$ in the matrix $P_c$.
Then, for each $x,y \in \calC$, 
\[
0 = \Big(\sum_{c \in \calC} \lambda_c P_c \Big)_{yx} = \sum_{c \in \calC} \lambda_c (P_c)_{yx} = \sum_{c \in \calC_{y,x}} \lambda_c.
\]
This can also be written as an inner product in $\C\calC$: for $\lambda = (\lambda_c)_{c \in \calC}$,
\[
0 = \sum_{c \in \calC_{y,x}} \lambda_c = \langle \lambda, \1_{\calC_{y,x}} \rangle.
\]
Together with \autoref{lem:reformulation_linearindep}, this proves the following lemma:

\begin{lemma} \label{lem:reformulation}
    Let $G$ and $\calC$ be as above. The following statements are equivalent:
    \begin{enumerate}
        \item $K_{\calC}$ is non-degenerate.
        \item The set $\{ P_c : c \in \calC\}$ is linearly independent.
        \item For $\lambda \in \C\calC$, if $\langle \lambda, \1_{\calC_{y,x}} \rangle = 0$ for all $x,y \in \calC$, then $\lambda = 0$.
        \item The set $\{\1_{\calC_{y,x}}: x,y \in \calC \}$ generates $\C\calC$ as a $\C$-module.
    \end{enumerate}
\end{lemma}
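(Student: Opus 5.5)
The equivalence of (1) and (2) is exactly \autoref{lem:reformulation_linearindep}, so it remains to connect (2) with (3) and (4).

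For (2) $\Leftrightarrow$ (3), I will use the entrywise computation preceding the statement. Given coefficients $\lambda = (\lambda_c)_{c\in\calC}$, the $(y,x)$ entry of $\sum_c \lambda_c P_c$ equals $\sum_{c \in \calC_{y,x}} \lambda_c = \langle \lambda, \1_{\calC_{y,x}}\rangle$. Here $\langle-,-\rangle$ is the bilinear pairing with respect to the basis $\calC$. Since $\1_{\calC_{y,x}}$ has real entries, it makes no difference whether we read it as bilinear or as the standard Hermitian pairing. Thus $\sum_c \lambda_c P_c = 0$ holds if and only if $\langle \lambda, \1_{\calC_{y,x}}\rangle = 0$ for all $x,y\in\calC$. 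Linear independence of $\{P_c\}$ says that the first condition forces $\lambda=0$. By the equivalence just established, this is precisely statement (3).

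For (3) $\Leftrightarrow$ (4), I will use elementary linear algebra on the finite-dimensional space $\C\calC$ with the non-degenerate pairing $\langle-,-\rangle$. Let $W$ be the span of $\{\1_{\calC_{y,x}}\}$. Statement (3) says $W^{\perp}=0$. Non-degeneracy in finite dimension gives $\dim W^\perp = |\calC| - \dim W$. Hence $W^\perp = 0$ if and only if $W = \C\calC$, which is statement (4).

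I do not expect any real obstacle. The only point needing care is to fix the pairing consistently, bilinear or sesquilinear, and to note that the orthogonal-complement dimension formula holds in either case because the characteristic vectors are real.
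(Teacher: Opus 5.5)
Your proposal is correct and follows essentially the same route as the paper. As there, (1)$\Leftrightarrow$(2) is \autoref{lem:reformulation_linearindep}, and (2)$\Leftrightarrow$(3) is the entrywise identity $\bigl(\sum_{c} \lambda_c P_c\bigr)_{yx} = \sum_{c \in \calC_{y,x}} \lambda_c = \langle \lambda, \1_{\calC_{y,x}} \rangle$ recorded just before the statement. The paper leaves (3)$\Leftrightarrow$(4) implicit, and your orthogonal-complement dimension count fills it in correctly for either reading of the pairing, since the vectors $\1_{\calC_{y,x}}$ are real.
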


\smallskip

From now on, we take $\calC$ to be a non-trivial, real conjugacy class in a finite group $G$. In that case, we can rewrite conditions \eqref{eq:1EC_conj} and \eqref{eq:2EC_conj} using the sets $\calC_{y,x}$, as well as in terms of the centralizer $C_G(x)$ of an element $x$ in $G$.
\begin{align}
    &\text{There exist } x,y,z \in \calC \text{ such that } \calC_{z,x} = \{y\}. \tag{1EC} \\
    &\text{There exist } x,y \in \calC \text{ such that } yC_G(x) \cap \calC = \{y\}. \tag{1EC'} \label{eq:1EC_cent}
\end{align}

\begin{align} 
    &\text{There exist } x, y, a_0, a_1 \in \calC \text{ such that}  \tag{2EC}\\
    &\text{(1) } \calC_{y,x} = \{ a_0, a_1 \}. \notag \\
    &\text{(2) There exists } g \in G \text{ of odd order such that } ^ga_0 = a_1. \notag \\
    &\text{There exist } a_0, a_1, x \in \calC \text{ such that} \tag{2EC'} \label{eq:2EC_cent} \\
    &\text{(1) } a_0 C_G(x) \cap \calC = \{ a_0, a_1 \}. \notag \\
    &\text{(2) There exists } g \in G \text{ of odd order such that } ^ga_0 = a_1. \notag 
\end{align}

We prove that conditions \eqref{eq:1EC_conj} and \eqref{eq:1EC_cent} and conditions \eqref{eq:2EC_conj} and \eqref{eq:2EC_cent} respectively are equivalent to each other, and that each of them is a sufficient condition for a real conjugacy class to have a non-degenerate Killing form. In order to do so, we need the following lemma.

\begin{lemma} \label{lem:shifting_Cyx}
    Let $\calC$ be a conjugacy class in a group $G$, then for all $x,y \in \calC$ and $g \in G$, we have
    \[
    \calC_{{}^gy,{}^gx} = {}^g\calC_{y,x}.
    \]
\end{lemma}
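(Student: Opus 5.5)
The plan is a direct double-inclusion argument based on the fact that conjugation by $g$ is an automorphism of $G$ which preserves the conjugacy class $\calC$. First I recall the definition: $\calC_{y,x} = \{c \in \calC : {}^c x = y\}$. Then I show ${}^g\calC_{y,x} \subseteq \calC_{{}^gy,{}^gx}$. Take $c \in \calC_{y,x}$. The element ${}^gc$ lies in $\calC$, because $\calC$ is a conjugacy class. It satisfies
\[
{}^{({}^gc)}({}^gx) = (gcg^{-1})(gxg^{-1})(gc^{-1}g^{-1}) = g(cxc^{-1})g^{-1} = {}^g({}^cx) = {}^gy.
\]
Hence ${}^gc \in \calC_{{}^gy,{}^gx}$.

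For the reverse inclusion I apply the same statement with $g^{-1}$ in place of $g$, and with ${}^gx, {}^gy$ in place of $x, y$. This gives ${}^{g^{-1}}\calC_{{}^gy,{}^gx} \subseteq \calC_{y,x}$. Conjugating both sides by $g$ yields $\calC_{{}^gy,{}^gx} \subseteq {}^g\calC_{y,x}$, and the two inclusions give equality. Alternatively, one can observe directly that $c \mapsto {}^gc$ is a bijection of $\calC$, and that ${}^cx = y$ holds if and only if ${}^{({}^gc)}({}^gx) = {}^gy$, since applying the automorphism ${}^g(-)$ is injective.

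There is no real obstacle here. The only points needing care are the identity ${}^{({}^gc)}({}^gx) = {}^g({}^cx)$, which is a one-line computation, and the observation that ${}^gc$ stays in $\calC$, which uses conjugation-stability of $\calC$. Note that reality of $\calC$ is not needed for this lemma.
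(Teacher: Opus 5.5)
Your proposal is correct and matches the paper's proof: the paper likewise shows ${}^g\calC_{y,x} \subseteq \calC_{{}^gy,{}^gx}$ by checking that ${}^gz$ conjugates ${}^gx$ to ${}^gy$, then gets the reverse inclusion by applying this to $g^{-1}$ and conjugating back by $g$. You also state explicitly that ${}^gc$ stays in $\calC$, which the paper uses without comment.
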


\begin{proof}
    Let $z \in \calC_{y,x}$ and $g \in G$, then the element $h = {}^gz$ satisfies
    \[
    {}^h({}^gx) = {}^{hg}x = {}^{gzg^{-1}g}x = {}^g({}^zx) = {}^gy,
    \]
    so $h \in \calC_{{}^gy,{}^gx}$. It follows that ${}^g\calC_{y,x} \subseteq \calC_{{}^gy,{}^gx}$. Using this inclusion, we obtain
    \[
    \calC_{{}^gy,{}^gx} = {}^g({}^{g^{-1}} \calC_{{}^gy,{}^gx}) \subseteq {}^g \calC_{{}^{g^{-1}}({}^gy),{}^{g^{-1}}({}^gx)} = {}^g \calC_{y,x},
    \]
    thus proving equality.
\end{proof}

\begin{proposition}
\label{lm:reduction1el}
Let $G$ and $\calC$ be as above.
Conditions \eqref{eq:1EC_conj} and \eqref{eq:1EC_cent} are equivalent. 
Additionally, if $\calC$ satisfies \eqref{eq:1EC_conj} or \eqref{eq:1EC_cent}, then $K_{\calC}$ is non-degenerate.
\end{proposition}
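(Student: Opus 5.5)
Two things need proving: that \eqref{eq:1EC_conj} and \eqref{eq:1EC_cent} are equivalent, and that \eqref{eq:1EC_conj} forces non-degeneracy via Lemma~\ref{lem:reformulation}(4).

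For the equivalence, I would first describe the set $\calC_{z,x}$ through a centralizer. Suppose $y \in \calC_{z,x}$, so ${}^y x = z$. For any $c \in \calC$ we then have
\[
{}^c x = z \iff {}^c x = {}^y x \iff y^{-1}c \in C_G(x) \iff c \in yC_G(x).
\]
Hence $\calC_{z,x} = yC_G(x) \cap \calC$ whenever $y \in \calC_{z,x}$. Now:
\begin{itemize}
\item If \eqref{eq:1EC_conj} holds with $\calC_{z,x} = \{y\}$, then $yC_G(x) \cap \calC = \{y\}$, which is \eqref{eq:1EC_cent}.
\item Conversely, given $x,y \in \calC$ with $yC_G(x)\cap\calC=\{y\}$, put $z = {}^y x$. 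Then $z \in \calC$ because $\calC$ is a conjugacy class, and $\calC_{z,x} = yC_G(x)\cap \calC = \{y\}$.
\end{itemize}

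For non-degeneracy, I use Lemma~\ref{lem:reformulation}(4): it suffices to show that every basis vector $c \in \calC$ lies in the span of the vectors $\1_{\calC_{y,x}}$. Suppose $\calC_{z,x} = \{y\}$, so that $\1_{\calC_{z,x}} = y$ is itself a basis vector. Take any $c\in\calC$. Since $\calC$ is a single conjugacy class, there is $g \in G$ with ${}^g y = c$. By Lemma~\ref{lem:shifting_Cyx},
\[
\calC_{{}^gz,{}^gx} = {}^g\calC_{z,x} = \{c\},
\]
so $c = \1_{\calC_{{}^gz,{}^gx}}$. Thus the set $\{\1_{\calC_{y,x}}\}$ contains the whole standard basis and generates $\C\calC$. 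Lemma~\ref{lem:reformulation} then shows that $K_\calC$ is non-degenerate.

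I do not expect a real obstacle. The only point needing care is the direction of the coset: ${}^c x = {}^y x$ gives $y^{-1}c\in C_G(x)$, so the set is the left coset $yC_G(x)$, matching \eqref{eq:1EC_cent}. The hypothesis that $\calC$ is real and conjugation-stable is needed only so that Lemma~\ref{lem:reformulation} applies, and the single-class hypothesis is what makes the transport step via Lemma~\ref{lem:shifting_Cyx} work.
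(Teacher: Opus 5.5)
Your proof is correct and matches the paper's argument: the computation ${}^c x = {}^y x \iff c \in yC_G(x)$ gives the equivalence, and transporting the singleton $\calC_{z,x}=\{y\}$ via Lemma~\ref{lem:shifting_Cyx} handles every $c\in\calC$. The only cosmetic difference is in the last step. The paper reads off the $(z,x)$-entries of $\sum_c \lambda_c P_c = 0$ to get $\lambda_c=0$ (linear independence, Lemma~\ref{lem:reformulation_linearindep}), whereas you use the equivalent spanning formulation (4) of Lemma~\ref{lem:reformulation}, which is the form the paper itself uses for the 2-element condition.
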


\begin{proof}
    Note that for $x,y,a \in \calC$, ${}^y x = {}^a x$ is equivalent to $a^{-1}y x = x a^{-1} y$ and thus $a^{-1} y \in C_G(x)$. The equivalence of \eqref{eq:1EC_conj} and \eqref{eq:1EC_cent} then immediately follows.
    
    Now suppose \eqref{eq:1EC_conj} holds, so $y$ is the only element in $\calC$ with $(P_y)_{zx} \neq 0$. 
    Then, if $(\lambda_c)_{c \in \calC} \in \C \calC$ such that $\sum_{c \in \calC} \lambda_c P_c = 0$, this implies
    \[
    0 = \Big(\sum_{c \in \calC} \lambda_c P_c \Big)_{zx} = \sum_{c \in \calC} \lambda_c (P_c)_{zx} = \sum_{a \in \calC_{z,x}} \lambda_a = \lambda_y.
    \]
    
    Moreover, any other element $c \in \calC$ can be written as $^hy$ for some $h \in G$. Then \autoref{lem:shifting_Cyx} implies that $\calC_{{}^hz,{}^hx} = {}^h\calC_{z,x} = {}^h\{ y \} = \{ c \}$.
    From the reasoning above, it follows that $\lambda_c = 0$ for all $c \in \calC$.
    Thus $\{ P_c : c \in \calC\}$ is linearly independent, and by \autoref{lem:reformulation_linearindep}, $K_\calC$ is non-degenerate.
\end{proof}

\begin{proposition}
\label{lm:reduction2el}
Let $G$ and $\calC$ be as above.
Conditions \eqref{eq:2EC_conj} and \eqref{eq:2EC_cent} are equivalent.
In addition, if $\calC$ satisfies \eqref{eq:2EC_conj} or \eqref{eq:2EC_cent}, then $K_\calC$ is non-degenerate.
\end{proposition}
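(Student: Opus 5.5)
The plan follows the proof of \autoref{lm:reduction1el}, with one extra ingredient: a parity argument along the orbit of the odd-order element $g$.

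\textbf{Equivalence of the two conditions.} As in the 1-element case, for $x,a,c \in \calC$ we have ${}^c x = {}^{a}x$ if and only if $a^{-1}c \in C_G(x)$, i.e. $c \in aC_G(x)$. Suppose \eqref{eq:2EC_conj} holds with $\calC_{y,x} = \{a_0,a_1\}$. Then ${}^{a_0}x = y$, so $\calC_{y,x} = a_0C_G(x) \cap \calC$, and \eqref{eq:2EC_cent} follows with the same $g$. Conversely, given \eqref{eq:2EC_cent}, set $y = {}^{a_0}x \in \calC$; then $\calC_{y,x} = a_0C_G(x)\cap\calC = \{a_0,a_1\}$.

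\textbf{Non-degeneracy.} Suppose $\sum_{c} \lambda_c P_c = 0$. By \autoref{lem:reformulation_linearindep} it suffices to show that all $\lambda_c = 0$. Taking the $(y,x)$ entry gives $\lambda_{a_0} + \lambda_{a_1} = 0$. Applying \autoref{lem:shifting_Cyx} with any $h \in G$ gives $\calC_{{}^hy,{}^hx} = \{{}^ha_0, {}^ha_1\}$, hence
\[
\lambda_{{}^ha_0} = -\lambda_{{}^ha_1} \quad \text{for all } h \in G.
\]
Now take $h = g^k$ and write $b_k = {}^{g^k}a_0$. Since ${}^ga_0 = a_1$, we have ${}^{g^k}a_1 = b_{k+1}$, so $\lambda_{b_{k+1}} = -\lambda_{b_k}$ for all $k$. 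Let $m$ be the odd order of $g$. Then $b_m = b_0 = a_0$, so iterating gives $\lambda_{a_0} = (-1)^m\lambda_{a_0} = -\lambda_{a_0}$, and therefore $\lambda_{a_0} = 0$. Finally, every $c \in \calC$ has the form ${}^ha_0$ for some $h \in G$. Repeating the argument with the conjugated data $({}^hx, {}^hy, {}^ha_0, {}^ha_1, {}^hg{}^{})$, where ${}^hg = hgh^{-1}$ again has odd order $m$, gives $\lambda_c = 0$ for every $c$.

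\textbf{Main obstacle.} The only delicate point is the parity step: the sign must flip once at each step of the $g$-orbit, and this is exactly why we need $\calC_{y,x}$ to have two elements related by $g$, and the order of $g$ to be odd. One detail to check is the case $a_0 = a_1$. There the relation becomes $2\lambda_{a_0} = 0$ directly, which is just the 1-element case, so nothing extra is needed. If the orbit of $a_0$ under $\langle g\rangle$ has length $\ell$ rather than $m$, then $\ell$ divides $m$ and is therefore also odd, so the same argument applies with $\ell$ in place of $m$.
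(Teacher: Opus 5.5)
Your proof is correct and is essentially the paper's argument in dual form. The paper shows that the alternating sum $\sum_i(-1)^i\1_{\calC_{{}^{g^i}y,{}^{g^i}x}}$ over one period of the odd-order element $g$ equals $2\cdot\1_{\{a_0\}}$, so the characteristic vectors span $\C\calC$; you run the same sign alternation on the coefficients $\lambda_c$ of a relation $\sum_c\lambda_cP_c=0$, which is how the paper handles the 1-element case. One small slip: when $a_0=a_1$ the set $\calC_{y,x}=\{a_0\}$ is a singleton, so the $(y,x)$-entry gives $\lambda_{a_0}=0$ rather than $2\lambda_{a_0}=0$, though the conclusion is the same.
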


\begin{proof}
    The equivalence of \eqref{eq:2EC_conj} and \eqref{eq:2EC_cent} follows in the same way as in the proof of \autoref{lm:reduction1el}.
    
    Suppose \eqref{eq:2EC_conj} holds. Then there are $x,y \in \calC$ such that $\calC_{y,x} = \{a_0,a_1\}$ for some $a_0,a_1 \in \calC$, together with an element $g \in G$ of odd order $k$ such that $^ga_0 = a_1$. We can assume that $a_0 \neq a_1$ as the case $a_0 = a_1$ has already been dealt with in \autoref{lm:reduction1el}. Then by \autoref{lem:shifting_Cyx},
    \[
    \calC_{^gy, ^gx} = \, ^g \calC_{y,x} = \{ ^ga_0, \,^ga_1 \} = \{a_1, a_2\},
    \]
    where we set $a_2 = \, ^ga_1$. 

    Set $a_i = \, ^ga_{i-1} = \, ^{g^i} a_0$ for $0 \leq i < k$. Then continuing the reasoning above yields
    \[
    \calC_{^{g^i}y, \, ^{g^i}x} = \{a_i, a_{i+1}\}, \qquad \text{for } 0 \leq i < k.
    \]

    Remark that $a_k = \, ^{g^k} a_0 = a_0$. Hence
    \[
    \calC_{^{g^{k-1}}y, \, ^{g^{k-1}}x} = \{a_{k-1}, a_k\} = \{a_{k-1}, a_0 \}.
    \]

    Therefore,
    \begin{align*}
        \sum_{i=0}^{k-1} (-1)^i \1_{\calC_{^{g^i}y, ^{g^i}x}}
        &= \sum_{i=0}^{k-1} (-1)^i (\1_{\{a_i\}} + \1_{\{a_{i+1}\}}) \\
        &= \1_{\{a_0\}} + \1_{\{a_1\}} - \1_{\{a_1\}} - \1_{\{a_2\}} + \dots + \1_{\{a_{k-1}\}} + \1_{\{a_0\}} \\
        &= 2 \cdot \1_{\{a_0\}}.
    \end{align*}

    This shows that $\1_{\{a_0\}}$ can be written as a linear combination of characteristic vectors $\1_{\calC_{^{g^i}y, \, ^{g^i}x}}$, with $0 \leq i < k$.

    Similar as in the proof of \autoref{lm:reduction1el}, each $c \in \calC$ can be written as $^ha_0$ for some $h \in G$. Hence for $c \in \calC$, $\1_{\{c\}}$ can be written as a linear combination of characteristic vectors $\1_{\calC_{^h(^{g^i}y), \, ^h(^{g^i}x)}}$, with $0 \leq i < k$. 
    As the vectors $\{ \1_{\{c\}} : c \in \calC \}$ generate $\C\calC$ as a $\C$-module, the set $\{\1_{\calC_{y,x}}: x,y \in \calC \}$  is also generating. 
    By \autoref{lem:reformulation_linearindep} and \autoref{lem:reformulation}, $K_\calC$ is thus non-degenerate.
\end{proof}

We remark here that the above proof shows that the conclusion of \autoref{lm:reduction2el} still holds true if condition (2) is replaced by the more general assumption that the undirected graph on the vertex set $\calC$ whose edges are given by $\{ {}^ga_0, {}^ga_1 \}$ ($g \in G$) contains an odd cycle. However, condition (2) appears to be easier to check than this more general condition, so we decided to formulate \autoref{lm:reduction2el} in that special case.

\section{Preliminaries on \texorpdfstring{$\PSL_2(q)$}{PSL\_2(q)}}
\label{sec:preliminaries}

In the next sections, we will prove \autoref{thm:PSL2_all_nondeg}. We first recall some important facts and results about the conjugacy classes in this family of linear groups. For more details, see for example \cite[Section 2.8]{Gorenstein}, \cite[Sections 1.9 and 3.6]{Suzuki} and \cite[Section 3.3]{Wilson}.

\smallskip

Let $q$ be a power of a prime number $p$. 
Recall that $\SL_2(q)$ is the group of all $(2 \times 2)$-matrices $A$ over $\F_q$ whose determinant is $1$. Then
$\PSL_2(q) = \SL_2(q) / Z(\SL_2(q))$, where $Z(\SL_2(q)) = \{I_2\}$ if $q$ is even and $\{I_2, -I_2\}$ if $q$ is odd.
In the following sections, we will denote matrices in $\SL_2(q)$ with uppercase letters $A, B,$ etc. When $q$ is even, so $\SL_2(q) = \PSL_2(q)$, we use this notation for elements in $\PSL_2(q)$ as well. For $q$ odd, we denote the image of $A \in \SL_2(q)$ as $\Bar{A} \in \PSL_2(q)$.

\smallskip

The following are well-known facts about conjugacy classes in $\PSL_2(q)$. The classes in these groups are, for the most part, determined by the characteristic polynomials or the traces of the representative matrices in $\SL_2(q)$ of their elements. In particular, for 
\[
A = \begin{pmatrix}
    a_{11} & a_{12} \\
    a_{21} & a_{22}
\end{pmatrix} \in \SL_2(q),
\]
the characteristic polynomial of $A$ is $\chi_A(t) = t^2 - (a_{11} + a_{22}) t +1 = t^2 - \Tr(A) t +1$. 

The non-identity elements in $\PSL_2(q)$ can be split into \textit{unipotent} and \textit{semisimple} elements. The properties of unipotent elements are listed in Table \ref{tab:unipotent_classes}.

\begin{table}[h]
    \centering
    \begin{tabular}{|m{3.5cm}|m{4cm}|m{4cm}|}
    \hline
    $q$ even & $q \equiv 1 \pmod 4$ & $q \equiv 3 \pmod 4$ \\
    \hline \hline
    $\chi_A(t)$ has $1$ as its unique double root. & \multicolumn{2}{m{8cm}|}{$\chi_A(t)$ has either $1$ or $-1$ as its unique double root, depending on the chosen representative $A$.} \\
    \hline 
    $\Tr(A) = 0$ & \multicolumn{2}{m{8cm}|}{$\Tr(A) = \pm 2$, depending on the chosen representative $A$.} \\
    \hline
    One conjugacy class of unipotent elements. & \multicolumn{2}{m{8cm}|}{Two conjugacy classes of unipotent elements.} \\
    \hline
    \multicolumn{3}{|m{11.5cm}|}{\hspace{3cm} Unipotent elements have order $p$.} \\
    \hline
    The conjugacy class of unipotent elements is real. & The conjugacy classes of unipotent elements are both real. & The conjugacy classes of unipotent elements are each other's inverse. \\
     \hline
    \end{tabular}
    
    \caption{Properties of a unipotent element $\Bar{A} \in \PSL_2(q)$.}
    \label{tab:unipotent_classes}
\end{table}

Note that the methods to verify non-degeneracy of the Killing form described in \autoref{sec:reformulation} do not apply to the conjugacy classes of unipotent elements in the case that $q \equiv 3 \pmod 4$ as these classes are not real. Therefore we will not consider them in this article.

When $q \equiv 1 \pmod 4$, the following lemma describes how to verify in which conjugacy class a unipotent element lies:

\begin{lemma} \label{lem:unipotent_distinct}
    Let $q$ be a power of an odd prime $p$. Suppose $q \equiv 1 \pmod 4$ and let $\calC$ be the conjugacy class of unipotent elements in $\PSL_2(q)$ represented by $\Bar{X}$, where
    \[
    X = \pm \begin{pmatrix}
        1 & 1 \\ 0 & 1
    \end{pmatrix}.
    \]
    Then, a unipotent element $\Bar{A} \in \PSL_2(q)$, where $A = (a_{ij})_{i,j=1,2} \in \SL_2(q)$, lies in $\calC$ if and only if $a_{12}$ and $a_{21}$ are squares in $\F_q$.
\end{lemma}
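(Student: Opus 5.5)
The plan is to lift the question to $\SL_2(q)$ and compute conjugates of the standard unipotent matrices explicitly. Write $U_b = \begin{pmatrix} 1 & b \\ 0 & 1\end{pmatrix} = I_2 + bN$ with $N = \begin{pmatrix} 0 & 1 \\ 0 & 0\end{pmatrix}$ and $b \in \F_q^\times$. It is standard that every unipotent element of $\SL_2(q)$ with trace $2$ is conjugate to some $U_b$. Conjugating by $\mathrm{diag}(t,t^{-1})$ sends $U_b$ to $U_{t^2b}$, so $\Bar{U}_b$ is conjugate to $\Bar{X}$ whenever $b$ is a nonzero square. Since $q \equiv 1 \pmod 4$, $-1$ is a square in $\F_q$. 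Consequently the condition ``$a_{12}$ and $a_{21}$ are squares'' does not depend on the choice of representative $\pm A$ of $\Bar{A}$, and we may take $\Tr(A) = 2$, i.e.\ $A = gU_bg^{-1}$ for some $g \in \SL_2(q)$ and $b \neq 0$. Here $0$ counts as a square.

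The key computation is as follows. For $g = \begin{pmatrix} \alpha & \beta \\ \gamma & \delta \end{pmatrix} \in \SL_2(q)$ we have
\[
gNg^{-1} = \begin{pmatrix} -\alpha\gamma & \alpha^2 \\ -\gamma^2 & \alpha\gamma \end{pmatrix}.
\]
Hence $A = gU_bg^{-1} = I_2 + b\,gNg^{-1}$ has $a_{12} = b\alpha^2$ and $a_{21} = -b\gamma^2$. Since $-1$ is a square, both $a_{12}$ and $a_{21}$ lie in $b\cdot(\F_q^\times)^2 \cup \{0\}$. Moreover $\alpha$ and $\gamma$ are not both $0$, so at least one of $a_{12}, a_{21}$ is nonzero. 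Therefore, if $b$ is a square then both entries are squares. If $b$ is a non-square then some off-diagonal entry is a non-square.

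To finish, note that if $\Bar{A} \in \calC$, then $A$ is conjugate to $\pm X$; with the normalisation $\Tr(A) = 2$ it is conjugate to $U_1$. Taking $b = 1$ above shows that both entries are squares. Conversely, suppose both entries are squares, and write $A = gU_bg^{-1}$. By the dichotomy just established, $b$ cannot be a non-square. So $b$ is a square, and $\Bar{A} \sim \Bar{U}_b \sim \Bar{X}$.

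The only point needing care is the assertion that every trace-$2$ unipotent element is conjugate to some $U_b$. For this I would pick a nonzero fixed vector $v$ of $A$ and complete it to a basis with determinant $1$; in that basis $A$ becomes upper unitriangular. Note that distinctness of the two $\PSL_2(q)$-classes is not needed for this equivalence. I do not expect a genuine obstacle here; the main risk is sign bookkeeping, which is absorbed by $-1$ being a square.
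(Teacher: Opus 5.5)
Your proof is correct, and its forward direction is the same as the paper's. Both rest on the formula for $gNg^{-1}$ (the paper's \eqref{eq:squares_antidiagonal}), which shows that every conjugate of $U_1$ has anti-diagonal $(\alpha^2,-\gamma^2)$.

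The converse is where you take a different route. The paper argues constructively. It writes $a_{12}=\alpha^2$ and $a_{21}=-\beta^2$, then uses $\Tr(A)=2$ and $\det(A)=1$, with a case split on whether $a_{11}=0$, to force $a_{11}=1-\epsilon\alpha\beta$ and $a_{22}=1+\epsilon\alpha\beta$ for a sign $\epsilon$. It then exhibits $A$ directly as $BU_1B^{-1}$, where $B$ has first column $(\epsilon\alpha,\beta)^T$. You instead put $A$ in the normal form $gU_bg^{-1}$ via a fixed vector, and show that the square class of $b$ can be read off from the anti-diagonal. Square anti-diagonal entries therefore rule out a non-square $b$.

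The paper's argument needs no normal-form lemma and produces the conjugating matrix explicitly. Yours avoids the case analysis and the sign bookkeeping, and it proves more along the way. The anti-diagonal of any conjugate of $U_b$ lies in $b(\F_q^{\times})^2\cup\{0\}$ and is not identically zero. This shows at the same time that the two unipotent classes are distinct, and that the other class consists exactly of the unipotent elements with a nonzero non-square on the anti-diagonal.
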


\begin{proof}
    First note that as $q \equiv 1 \pmod 4$, $-1 \in \F_q^{\times 2}$. Thus, it does not matter which matrix $A$ or $-A$ in $\SL_2(q)$ we choose to represent a unipotent element $\Bar{A}$ in $\PSL_2(q)$, as this does not change whether $a_{12}$ and $a_{21}$ are squares. We may therefore choose $A$ such that $\Tr(A) = 2$.

    Then $\Bar{A} \in \calC$ if and only if there exists a matrix $B = (b_{ij})_{i,j=1,2} \in \SL_2(q)$ such that
    \begin{equation} \label{eq:squares_antidiagonal}
    A = B \begin{pmatrix}
        1 & 1 \\ 0 & 1
    \end{pmatrix} B^{-1} = \begin{pmatrix}
        1 - b_{11} b_{21} & b_{11}^2 \\
        - b_{21}^2 & 1 + b_{11} b_{21}
    \end{pmatrix}.
    \end{equation}
    Therefore, for every conjugate $\Bar{A}$ of $\Bar{X}$, $A$ has squares on the anti-diagonal.

    On the other hand, suppose $a_{12} = \alpha^2$ and $a_{21} = -\beta^2$ for some $\alpha, \beta \in \F_q$. Note that $\alpha$ and $\beta$ cannot be zero simultaneously as $\Tr(A)=2$ implies that $A = I_2$, which is not unipotent.
    
    Now $\det(A) = a_{11}a_{22} + \alpha^2 \beta^2 = 1$. 
    If $a_{11} = 0$, then $\alpha \beta = \epsilon$ for some $\epsilon \in \{1, -1\}$, and $a_{22} = 2$. Thus $a_{11} = 1 - \epsilon \alpha \beta$ and $a_{22} = 1 + \epsilon\alpha \beta$.
    If $a_{11} \neq 0$, we can write $a_{22} = \frac{1-\alpha^2 \beta^2}{a_{11}}$. Then $\Tr(A) = a_{11} + a_{22} = 2$ implies $a_{11} = 1 - \epsilon \alpha \beta$ and $a_{22} = 1 + \epsilon \alpha \beta$, for some $\epsilon \in \{1, -1\}$. 
    In all cases, $A$ is of the form as in Equation \eqref{eq:squares_antidiagonal} for a suitable matrix $B$ with $b_{11} = \epsilon \alpha$ and $b_{21} = \beta$ and thus $\Bar{A}$ is indeed conjugate to $\Bar{X}$ in $\PSL_2(q)$.
\end{proof}

Non-unipotent elements in $\PSL_2(q)$ have order prime to $p$ and are termed \textit{semi\-simple}.
In $\PSL_2(q)$, two semisimple elements are conjugate if and only if they can be represented by matrices in $\SL_2(q)$ with the same characteristic polynomial and thus the same trace. 
One can easily derive from this that all conjugacy classes of semisimple elements in $\PSL_2(q)$ are real.
Moreover, for $q$ odd, involutions are semisimple and are represented by matrices $A \in \SL_2(q)$ with $\chi_A(t) = t^2 + 1$ or thus $\Tr(A) = 0$.

We shall make a distinction between \textit{semisimple split} and \textit{semisimple non-split} elements. The properties of both types of elements are listed in \autoref{tab:semisimple}.

\begin{table}
    \centering
    \begin{tabular}{|m{5.5cm}|m{5.5cm}|}
    \hline
    Semisimple split case & Semisimple non-split case \\
    \hline \hline
    $\chi_A(t)$ has 2 distinct roots in $\F_q$. & $\chi_A(t)$ has no roots in $\F_q$. \\
    \hline
    $\Tr(A) = \xi + \xi^{-1}$ for some $\xi \in \F_q^\times$. & $\Tr(A) = \xi + \xi^q$ for some $\xi \in \F_{q^2}^\times \setminus \F_q^\times$ such that $\xi^{q+1} = 1$. \\
    \hline
    For $q$ odd, $\Tr(A)^2 - 4$ is a square in $\F_q^\times$. & For $q$ odd, $\Tr(A)^2 - 4$ is not a square in $\F_q^\times$. \\
    \hline 
    The order of $\Bar{A}$ divides $q-1$ if $q$ is even and $\frac{q-1}{2}$ if $q$ is odd. & The order of $\Bar{A}$ divides $q+1$ if $q$ is even and $\frac{q+1}{2}$ if $q$ is odd. \\
    \hline 
    \multicolumn{2}{|m{11cm}|}{\hspace{2.7cm} The conjugacy class of $\Bar{A}$ is real.} \\
    \hline
    \end{tabular}
    
    \caption{Properties of a semisimple element $\Bar{A} \in \PSL_2(q)$.}
    \label{tab:semisimple}
\end{table}

Finally, in later sections we will need the following lemma, which follows from \cite[Theorem 5.48]{LidlNiederreiter}:

\begin{lemma} \label{lem:x^2+1_square}
    Let $q$ be an odd-prime power and $N$ the number of elements $x \in \F_q$ such that $x^2+1$ is a square. Then
    \[
    N = \begin{cases}
        \frac{q+1}{2} &\text{if } q \equiv 1 \pmod 4, \\
        \frac{q-1}{2} &\text{if } q \equiv 3 \pmod 4.
    \end{cases}
    \]
\end{lemma}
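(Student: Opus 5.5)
The plan is a quadratic character sum, evaluated with \cite[Theorem 5.48]{LidlNiederreiter}. We count $x \in \F_q$ with $x^2+1$ a square, where $0$ counts as a square. The formula forces this convention: for $q \equiv 1 \pmod 4$ the two roots of $x^2+1$ have to be included to reach $\frac{q+1}{2}$.

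Let $\eta$ be the quadratic character of $\F_q$, extended by $\eta(0)=0$. The cited result says that for $f(x) = ax^2+bx+c$ with $a \neq 0$ and $b^2-4ac \neq 0$,
\[
\sum_{x \in \F_q} \eta(f(x)) = -\eta(a).
\]
For $f(x) = x^2+1$ the discriminant is $-4 \neq 0$ (as $q$ is odd) and $a = 1$. Hence $\sum_{x} \eta(x^2+1) = -1$.

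Next partition $\F_q$ into three sets:
\begin{itemize}
\item the $Z$ elements with $x^2+1 = 0$;
\item the $S$ elements with $x^2+1$ a nonzero square;
\item the $T$ elements with $x^2+1$ a non-square.
\end{itemize}
This gives $S + T = q - Z$, and the character sum gives $S - T = -1$. Therefore
\[
S = \frac{q - Z - 1}{2}.
\]

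Now $Z = 2$ if $-1$ is a square in $\F_q$, i.e.\ if $q \equiv 1 \pmod 4$, and $Z = 0$ if $q \equiv 3 \pmod 4$. So $N = S + Z$ equals
\[
\frac{q-3}{2} + 2 = \frac{q+1}{2} \quad \text{or} \quad \frac{q-1}{2}
\]
in the two cases respectively. This is the claim.

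The only real obstacle is fixing the convention that $0$ counts as a square, which is what makes the stated formula come out. If one prefers to avoid quoting \cite{LidlNiederreiter}, there is a direct substitute. Count the solutions of $y^2 = x^2 + 1$, i.e.\ $(y-x)(y+x) = 1$; setting $u = y - x$ gives exactly $q-1$ solutions. Each $x$ with $x^2+1$ a nonzero square contributes two values of $y$, and each $x$ with $x^2+1 = 0$ contributes one. Hence $2S + Z = q - 1$, which yields the same value of $S$.
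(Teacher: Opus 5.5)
Your argument is correct and follows the paper's route: the paper gives no proof beyond citing \cite[Theorem 5.48]{LidlNiederreiter}, and you supply the bookkeeping from that citation, including the convention that $0$ counts as a square, which the stated formula requires. Your alternative count via $(y-x)(y+x)=1$ is also valid (it uses that $2$ is invertible) and makes the lemma self-contained.
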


\section{Semisimple elements in \texorpdfstring{$\PSL_2(q)$}{PSL2(q)}, \texorpdfstring{$q$}{q} odd}
\label{sec:semisimple_odd}

In this section, we show that the 1-element condition holds for all semisimple conjugacy classes in a simple group of the form $\PSL_2(q)$ for an odd $q$. Recall from \autoref{tab:semisimple} that all of these classes are real.

We separately address the case of the conjugacy class $\calC$ of (non-trivial) involutions in $G = \PSL_2(q)$.

\begin{theorem} \label{thm:involutions}
    Let $\calC$ be the conjugacy class of involutions in $\PSL_2(q)$ with $q$ odd. 
    Then $\calC$ satisfies \eqref{eq:1EC_cent}. In particular, the Killing form $K_\calC$ is non-degenerate.
\end{theorem}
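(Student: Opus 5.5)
We verify \eqref{eq:1EC_cent} by a direct matrix computation in $\SL_2(q)$. We fix the involution $x = \Bar{X}$ with $X = \begin{pmatrix} 0 & 1 \\ -1 & 0 \end{pmatrix}$. Then we look for an involution $y = \Bar{Y}$ such that the only involution in the coset $yC_G(x)$ is $y$ itself. Since $y$ is an involution, $yh$ is an involution exactly when $y$ inverts $h$. So the condition says that $y$ inverts no nontrivial element of $C_G(x)$.

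\textbf{Step 1: describe the centralizer.} We first write down the preimage of $C_G(x)$ in $\SL_2(q)$. The matrices commuting with $X$ up to sign are
\[
H_1 = \Bigl\{\begin{pmatrix} a & b \\ -b & a \end{pmatrix} : a^2+b^2 = 1\Bigr\}, \qquad
H_2 = \Bigl\{\begin{pmatrix} a & b \\ b & -a \end{pmatrix} : a^2+b^2 = -1\Bigr\}.
\]
This is the dihedral group of order $q \mp 1$ in $\PSL_2(q)$.

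\textbf{Step 2: reduce to a trace condition.} A general involution is $Y = \begin{pmatrix} c & d \\ e & -c \end{pmatrix}$ with $c^2 + de = -1$. Since involutions are exactly the elements of trace $0$, membership $\overline{YH} \in \calC$ for $H \in H_1 \cup H_2$ reduces to the vanishing of $\Tr(YH)$. A short computation gives:
\begin{itemize}
\item for $H \in H_1$: $\Tr(YH) = b(e-d)$;
\item for $H \in H_2$: $\Tr(YH) = 2ca + b(d+e)$.
\end{itemize}
For $H \in H_1$, choosing $e \neq d$ forces $b = 0$, so $H = \pm I_2$, which only recovers $y$. It then remains to choose $c,d,e$ so that the line $2ca + b(d+e) = 0$ meets the conic $a^2 + b^2 = -1$ in no $\F_q$-point.

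\textbf{Step 3: choose the parameters.} Substituting the line into the conic produces an equation $\lambda b^2 = -1$ (or $\lambda a^2 = -1$). The coefficient $\lambda$ has the shape $1 + t^2$, where $t = (d+e)/(2c)$ or its inverse. So there is no solution iff $-(1+t^2)$ is a non-square, with the degenerate cases $c = 0$ or $d+e = 0$ handled separately.
\begin{itemize}
\item If $q \equiv 3 \pmod 4$, take $e = -d$ with $d \neq 0$ and $c^2 = d^2 - 1 \neq 0$. Then the condition becomes $ca = 0$, hence $a = 0$ and $b^2 = -1$, which is impossible since $-1$ is a non-square.
\item In general, we need a value $t$ for which $1 + t^2$ has the appropriate quadratic character. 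It must also be realisable by some $c,d,e$ with $c^2 + de = -1$ and $d \neq e$. Lemma~\ref{lem:x^2+1_square} shows that both square and non-square values of $1+t^2$ occur (for $q$ not too small). This leaves enough freedom to solve $c^2 + de = -1$ with $d+e = 2ct$ prescribed. That amounts to requiring $c^2(t^2+1) - 1$ to be a square, which is again arranged by a counting argument.
\end{itemize}
Small values of $q$ (e.g. $q = 5, 7, 9$) will be checked by hand if the counting leaves no room.

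\textbf{Main obstacle.} I expect Step 3 to be the main difficulty, especially the case $q \equiv 1 \pmod 4$. There one must simultaneously control two quadratic-residue conditions: non-existence of points on the intersection, and existence of $Y$ with $\det Y = 1$. The degenerate cases $c = 0$ or $a = 0$ must also be excluded. Once such $y$ is found, \eqref{eq:1EC_cent} holds, and Proposition~\ref{lm:reduction1el} gives non-degeneracy of $K_\calC$.
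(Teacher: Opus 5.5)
Your set-up is the paper's. You use the same $X$, the same decomposition $H_1\cup H_2$ of the preimage of the dihedral centralizer, and the same trace formulas. You make the same reduction to showing that the line $2ca+b(d+e)=0$ misses the conic $a^2+b^2=-1$.

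The paper stays inside the one-parameter family $A_\alpha$, i.e.\ $(c,d,e)=(\alpha,-1-\alpha^2,1)$, so $t=-\alpha/2$. There $\det=1$ is automatic, and the requirements become $\alpha\neq0$, $\alpha^2\neq-2$ (your $d\neq e$), and $\alpha^2+4$ a square exactly when $-1$ is not. These are settled by Lemma~\ref{lem:x^2+1_square} and hand checks at $q=5,7$.

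Your $q\equiv 3\pmod 4$ case ($t=0$, $e=-d$) lies outside that family but is correct and avoids the count. You only need a point on $d^2-c^2=1$ with $cd\neq0$. Writing $d-c=u$, $d+c=u^{-1}$ shows there are $q-3$ such points, so $q\ge7$ suffices. For $q=3$, i.e.\ $\PSL_2(3)\cong\Alt_4$, the statement genuinely fails, and the paper's argument does not cover it either.

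The gap is the case $q\equiv1\pmod 4$: it is only announced, not proved, and the condition you state there is wrong. With $d+e=2ct$ and $de=-1-c^2$, the entries $d,e$ are the roots of $z^2-2ctz-(1+c^2)$. Its discriminant is $4\bigl(c^2(t^2+1)+1\bigr)$, not $c^2(t^2+1)-1$.

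The ``two simultaneous quadratic-residue conditions'' you flag as the main obstacle disappear once you choose $c=-2t$. Then $c^2(t^2+1)+1=(2t^2+1)^2$ is automatically a square and $\{d,e\}=\{1,-1-4t^2\}$. This is exactly the paper's family, and you only need $t\neq0$, $2t^2\neq-1$, and the right quadratic character of $1+t^2$.

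Your ``iff'' also overlooks $1+t^2=0$, where $(1+t^2)b^2=-1$ has no solution at all. For $q\equiv1\pmod 4$ this gives a uniform choice with no counting and no small cases. With $i^2=-1$, take $Y=\begin{pmatrix}1&1+i\\ i-1&-1\end{pmatrix}$, so $\det Y=1$. Then $\Tr(YH)=-2b$ for $H\in H_1$ and $\Tr(YH)=2(a+ib)$ for $H\in H_2$, and $a=-ib$ forces $a^2+b^2=0\neq-1$. With this choice, or with the paper's count, Step 3 is complete.
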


\begin{proof}
    The involutions in $\PSL_2(q)$ with $q$ odd are represented by matrices $A \in \SL_2(q)$ with characteristic polynomial $\chi_A(t) = t^2 + 1$.

Let $A$ be the matrix
\[
A = \begin{pmatrix}
    0 & 1 \\
    -1 & 0
\end{pmatrix}.
\]

Recall that as $G = \PSL_2(q) = \SL_2(q)/\{\pm I_2\}$, $\Bar{B} \in C_G(\Bar{A})$ if and only if $BA = \pm AB$. Defining $B_{x,y} = \begin{pmatrix}
    x & y \\
    -y & x
\end{pmatrix}$ and $B'_{x,y} = \begin{pmatrix}
    x & y \\
    y & -x
\end{pmatrix}$, we therefore obtain
\[
C_G(\Bar{A}) = \{ \Bar{B}_{x,y} : x,y \in \F_q, x^2 + y^2 = 1 \} \cup  \{ \Bar{B}'_{x,y} : x,y \in \F_q, x^2 + y^2 = - 1 \}.
\]

Considering the characteristic polynomial corresponding to the involutions, the class $\calC$ contains, in particular, the elements $\Bar{A}_{\alpha}$ with $\alpha \in \F_q$, where
\[
A_{\alpha} = \begin{pmatrix}
    \alpha & - 1 - \alpha^2 \\
    1 & - \alpha
\end{pmatrix}.
\]

Our goal is to find an element $\Bar{A}_{\alpha}$ such that $\Bar{A}_{\alpha} C_G(\Bar{A}) \cap \calC = \{\Bar{A}_{\alpha}\}$. In other words, we want to find $\alpha \in \F_q$ such that $\Tr(A_{\alpha} B) = 0$ implies $B = \pm I_2$ for $\Bar{B} \in C_G(\Bar{A})$.

First considering the case $B = B_{x,y}$, this implies
\[
\Tr(A_{\alpha} B_{x,y}) = (\alpha^2 +2) y.
\]
Thus $\Bar{A}_{\alpha} \Bar{B}_{x,y} \in \calC$ if and only if $y = 0$, except when $\alpha^2 = -2$. Thus if we choose $\alpha$ such that $\alpha^2 \neq -2$, the constraint $x^2 + y^2 = 1$ implies $x = \pm 1$ and thus $B_{x,y} = \pm I_2$.

On the other hand, in the case $B = B'_{x,y}$, we obtain
\[
\Tr(A_{\alpha} B'_{x,y}) = 2 \alpha x - \alpha^2 y.
\]
We now additionally assume that $\alpha \neq 0$. From the previous equation it then follows that $\Bar{A}_{\alpha} \Bar{B}'_{x,y} \in \calC$ if and only if this trace is equal to $0$, or equivalently,
\[
x = \frac{\alpha y}{2}.
\]
The constraint $x^2 + y^2 = - 1$ for the elements $B'_{x,y}$ then implies that
\begin{equation} \label{eq:involutions_tildezxy}
    \left( 1 + \frac{\alpha^2}{4}  \right)y^2 = -1.
\end{equation}

This shows that $B'_{x,y} \neq \pm I_2$ for all $x,y \in \F_q$. Therefore, we want to force Equation \eqref{eq:involutions_tildezxy} to not have any solutions, so that if $\Tr(A_{\alpha} B) = 0$ for any $B$ such that $\Bar{B} \in C_G(\Bar{A})$, the solution $B = \pm I_2$ that we obtained above is the only possibility. 
Thus, in order to ensure that Equation \eqref{eq:involutions_tildezxy} cannot hold, we will choose $\alpha$ in such a way that the left-hand side is a square if and only if $-1$ is not.

We multiply Equation \eqref{eq:involutions_tildezxy} by $4$. Note that then the left-hand side of Equation \eqref{eq:involutions_tildezxy} is a square if and only if $\alpha^2 + 4$ is. Furthermore, recall that we have obtained the additional constraint $\alpha^2 \neq -2$.

For $q \equiv 3 \pmod 4$, $-1$ is not a square in $\F_q^\times$ so we need to find $\alpha \in \F_q$ such that $\alpha^2 + 4$ is a square and $\alpha^2 \neq -2$.
By \autoref{lem:x^2+1_square}, there are $\frac{q-1}{2}$ elements $\beta$ such that $\beta^2 + 4$ is a square. One of these elements is $0$, and at most two of these elements satisfy $\beta^2 = -2$. Hence there are still at least $\frac{q-7}{2}$ choices for $\alpha$ with $\alpha \neq 0$. This implies that an appropriate choice for $\alpha$ exists when $q > 7$. When $q = 7$, it is readily checked that $\alpha = 2$ can be chosen.

Now suppose $q \equiv 1 \pmod 4$. In this case, $-1$ is a square so we need to find $\alpha \in \F_q$ such that $\alpha^2 + 4$ is not a square and $\alpha^2 \neq -2$.
By \autoref{lem:x^2+1_square}, there are $q - \frac{q+1}{2} = \frac{q-1}{2}$ elements $\beta \in \F_q$ such that $\beta^2 + 4$ is not a square. Again, at most two of these are a root of $-2$, and thus there are at least $\frac{q-5}{2}$ choices for $\alpha$. 
This lower bound only reaches 0 for $q=5$, but in this case we can clearly choose $\alpha = 2$ to satisfy our requirements.

For $\alpha$ chosen as above, Equation \eqref{eq:involutions_tildezxy} has no solutions.
Thus $B$ is forced to be $\pm I_2$ and therefore $\Bar{A}_{\alpha} C_G(\Bar{A}) \cap \calC = \{\Bar{A}_{\alpha}\}$. Thus, \eqref{eq:1EC_cent} is satisfied and by an application of \autoref{lm:reduction1el}, we conclude that $K_\calC$ is non-degenerate. \qedhere

\end{proof}

We now settle the case of non-involutive semisimple elements.

\begin{theorem}
    Let $\calC$ be a conjugacy class of non-involutive semisimple elements in $G = \PSL_2(q)$ with $q$ odd. Then $\calC$ satisfies \eqref{eq:1EC_cent}. In particular, $K_\calC$ is non-degenerate.
\end{theorem}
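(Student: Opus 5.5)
The plan is to verify \eqref{eq:1EC_cent} directly. That is, I will find $\Bar{Y} \in \calC$ such that $\Bar{Y}\Bar{B} \in \calC$ with $\Bar{B} \in C_G(\Bar{X})$ forces $\Bar{B} = 1$; \autoref{lm:reduction1el} then gives non-degeneracy. I will treat the split and non-split cases uniformly.

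Fix $X \in \SL_2(q)$ representing $\calC$, and let $t = \Tr(X)$. Since $\Bar{X}$ is semisimple and not an involution, $t \neq 0$ and $t \neq \pm 2$.

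\textbf{Step 1: the centralizer.} First I determine $C_G(\Bar{X})$. If $BX = -XB$, then $X = -B^{-1}XB$, so taking traces gives $t = -t$, i.e. $t = 0$, which is excluded. Hence $C_G(\Bar{X})$ is the image of $C_{\SL_2(q)}(X)$. Because $X$ has distinct eigenvalues, its centralizer in $M_2(\F_q)$ is $\F_q[X] = \{uI + vX\}$. So
\[
C_G(\Bar{X}) = \{ \overline{uI+vX} : u^2 + tuv + v^2 = 1 \},
\]
where the condition is $\det(uI+vX)=1$, computed from $\chi_X(t')=t'^2 - t t' + 1$.

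\textbf{Step 2: reduction to a quadratic.} Membership in $\calC$ is decided by the trace $\pm t$, because semisimple classes are determined by the trace. Take any $Y$ with $\Bar{Y} \in \calC$ and $\Tr(Y) = t$, and put $s = \Tr(YX)$. Then for $B = uI + vX$,
\[
\Tr(YB) = ut + vs.
\]
So $\Bar{Y}\Bar{B} \in \calC$ iff $ut + vs = \varepsilon t$ for some $\varepsilon \in \{\pm 1\}$. Substituting $u = \varepsilon - vs/t$ into the determinant equation, the constant terms cancel. One obtains
\[
v\Big( \varepsilon\big(t - \tfrac{2s}{t}\big) + v\big(\tfrac{s^2}{t^2} - s + 1\big) \Big) = 0 .
\]
I now choose $s = t^2/2$. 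The linear coefficient vanishes, and the quadratic coefficient becomes $1 - t^2/4$, which is nonzero since $t \neq \pm 2$. Hence $v = 0$, so $u = \varepsilon$ and $B = \pm I$. Therefore $\Bar{Y}C_G(\Bar{X}) \cap \calC = \{\Bar{Y}\}$, as required.

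\textbf{Step 3: existence of $Y$ (main obstacle).} It remains to find $Y \in \SL_2(q)$ with $\Tr(Y) = t$ and $\Tr(YX) = t^2/2$, and with $\Bar{Y} \in \calC$. The two trace conditions are independent linear conditions on the entries of $Y$, since $I$ and $X$ are linearly independent. They cut out an affine plane, on which $\det Y = 1$ is a quadratic equation, i.e. an affine conic over $\F_q$.

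I would first try to show this conic is nondegenerate and hence has points. For concreteness I would take $X$ in companion form $\begin{pmatrix} 0 & -1 \\ 1 & t \end{pmatrix}$, which works in both the split and non-split cases. I would then parametrize $Y$ by two free entries and check directly that the resulting quadratic in two variables has a solution. A nondegenerate affine conic over $\F_q$ has at least $q-1 > 0$ points; if degeneracy occurs, I expect it only for small $q$, to be handled by hand.

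Any such $Y$ has characteristic polynomial $t'^2 - tt' + 1$ with $t \neq \pm 2$. It is therefore semisimple with the same trace as $X$, so $\Bar{Y} \in \calC$ by the classification recalled in \autoref{sec:preliminaries}. This closes the argument.
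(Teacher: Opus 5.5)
Your Steps 1 and 2 are the paper's argument in coordinate-free form, and your computations there are correct. The paper's centralizer elements $\begin{pmatrix} x & y\delta \\ y & x \end{pmatrix}$ are $xI+y(A-\tfrac b2 I)$. Its requirement that the linear coefficient $\gamma\delta+\frac{\delta-\alpha^2}{\gamma}$ vanish is exactly $\Tr(A_{\alpha,\gamma}A)=\tfrac{b^2}{2}$, which is your choice $s=\tfrac{t^2}{2}$. Its remaining quadratic coefficient $-\delta=1-\tfrac{b^2}{4}$ is your $1-\tfrac{t^2}{4}$.

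The one thing you have not actually proved is Step 3, the existence of $Y$. You only sketch a plan there. Your hedge about degenerate small-$q$ cases is also unnecessary, because the conic is never degenerate.

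Write $Y=\begin{pmatrix} y_{11} & y_{12} \\ y_{21} & y_{22}\end{pmatrix}$ and $X=\begin{pmatrix} 0 & -1 \\ 1 & t\end{pmatrix}$. The two trace conditions give $y_{11}=t-y_{22}$ and $y_{21}=y_{12}+ty_{22}-\tfrac{t^2}{2}$. Setting $e=y_{22}-\tfrac t2$, the condition $\det Y=1$ becomes
\[
y_{12}^2+t\,y_{12}e+e^2=\det(eI+y_{12}X)=\tfrac{t^2}{4}-1\neq 0 .
\]
The determinant map $\F_q[X]^\times\to\F_q^\times$ is surjective in both cases:
\begin{itemize}
\item in the split case it is $(\mu_1,\mu_2)\mapsto\mu_1\mu_2$ on $\F_q^\times\times\F_q^\times$, and you can simply take $y_{12}=0$ with $y_{22}$ an eigenvalue of $X$;
\item in the non-split case it is the norm $\F_{q^2}^\times\to\F_q^\times$.
\end{itemize}
So a solution always exists. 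Equivalently, the symmetric matrix of the homogenized conic has determinant $(1-\tfrac{t^2}{4})^2\neq0$. Hence the conic is nondegenerate for every admissible $t$ and has at least $q-1$ affine points.

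This is the same norm equation the paper solves in the form $\alpha^2-\delta\gamma^2=\delta$. The paper does it via \autoref{lem:x^2+1_square}, with a split/non-split case distinction and the side condition $\gamma\neq0$. Surjectivity of the norm handles both cases at once.
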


\begin{proof}

Let $\calC$ be as in the statement of the theorem. 
Recall from \autoref{sec:preliminaries} that all elements in $\calC$ can be represented by matrices in $\SL_2(q)$ with the same characteristic polynomial $\chi(t) = t^2 - bt + 1$, for a fixed $b \in \F_q$. Note that $b \notin \{0, \pm 2\}$ as the elements of $\calC$ are not unipotent or involutions.
Let $\delta = \left( \frac{b}{2} \right)^2 -1$. Then by Table \ref{tab:semisimple}, $\Bar{A}$ is split if and only if $\delta$ is a square in $\F_q$. Observe that $\chi_{-A}(t) = t^2 + bt + 1$, so $\delta$ is independent of the choice of a representative.

Fixing one choice of $b$, we now let $A$ be the matrix
\[
A = \begin{pmatrix}
    \frac{b}{2} & \delta \\ 1 & \frac{b}{2}
\end{pmatrix},
\]
and obtain an element $\Bar{A} \in \calC$. We then have
\[
C_G(\Bar{A}) = \left\{ \overline{\begin{pmatrix}
    x & y\delta \\ y & x
\end{pmatrix}} : x,y \in \F_q, x^2-\delta y^2 = 1\right\}.
\]

Furthermore, $\calC$ contains the elements $\Bar{A}_{\alpha,\gamma}$ ($\alpha, \gamma \in \F_q, \gamma \neq 0$) where
\[
A_{\alpha,\gamma} = \begin{pmatrix}
    \frac{b}{2}-\alpha & \frac{\delta - \alpha^2}{\gamma} \\ \gamma & \frac{b}{2}+\alpha
\end{pmatrix}.
\]
We want to find an element $\Bar{A}_{\alpha,\gamma}$ such that $\Bar{A}_{\alpha,\gamma}C_G(\Bar{A}) \cap \calC = \{ \Bar{A}_{\alpha,\gamma} \}$. This is the same as saying that $\Tr (A_{\alpha,\gamma}B) = b$, where $\Bar{B} \in C_G(\Bar{A})$, implies that $B = I_2$ or $B = -I_2$.

For an element $B \in \SL_2(q)$, we have $\Bar{B} \in C_G(\Bar{A})$ if and only if $B = B_{x,y} = \begin{pmatrix}
    x & y\delta \\ y & x
\end{pmatrix}$. With this, we calculate
\[
\Tr(A_{\alpha,\gamma}B_{x,y}) = bx + \left( \gamma\delta + \frac{\delta - \alpha^2}{\gamma} \right) y 
\]
Therefore, $\Bar{A}_{\alpha,\gamma}\Bar{B} \in \calC$ if and only if $\Bar{B}$ is represented by an element of the form $B_{x,y}$ such that $\Tr(A_{\alpha,\gamma}B_{x,y}) = b$, or equivalently,
\[
x = 1 - \left( \gamma\delta + \frac{\delta - \alpha^2}{\gamma} \right) \frac{y}{b}.
\]
Plugging this into the constraint $x^2 - \delta y^2 = 1$, we obtain a quadratic equation of the form
\[
c y^2 - 2 \left( \gamma\delta + \frac{\delta - \alpha^2}{\gamma} \right) \frac{y}{b} = 0,
\]
where 
\[
c = \Big( \frac{(\gamma^2 + 1) \delta - \alpha^2}{\gamma b} \Big)^2 - \delta. 
\]
This equation has the obvious solution $y = 0$, implying $x = 1$ and thus corresponding to the element $\Bar{B} = \Bar{B}_{1,0} = \Bar{I}_2 \in C_G(\Bar{A})$. This solution is unique if and only if the linear coefficient vanishes while the quadratic does not, that is:
\[
\frac{2}{b}\left( \gamma\delta + \frac{\delta - \alpha^2}{\gamma} \right) = 0 \quad \text{and} \quad c \neq 0.
\]
The first equation is equivalent to
\[
\delta(\gamma^2 +1) = \alpha^2.
\]
This equation has a solution if and only if either $\delta$ and $\gamma^2 +1$ are both squares or they are both non-squares.
Furthermore, in that case, 
\[
c = - \delta = 1 - \Big( \frac{b}{2} \Big)^2 \neq 0,
\]
as $\Bar{A}$ is not unipotent so $b \neq \pm 2$.

Whether $\delta \in \F_q^2$ depends on whether $\Bar{A}$ is split or non-split. In the split case, by \autoref{lem:x^2+1_square}, there is at least one choice for $\gamma \in \F_q$ such that $\gamma^2 + 1 \in \F_q^2$, and in the non-split case, there is a $\gamma \in \F_q$ such that $\gamma^2+1 \not\in \F_q^2$.
Hence in both cases there is a choice of an $\alpha, \gamma \in \F_q$ such that $\delta(\gamma^2 +1) = \alpha^2$. With this choice, $B$ is forced to be $I_2$ and therefore, $\Bar{A}_{\alpha,\gamma} C_G(\Bar{A}) \cap \calC = \{  \Bar{A}_{\alpha,\gamma} \}$. Hence, \eqref{eq:1EC_cent} is satisfied and $K_\calC$ is non-degenerate.
\end{proof}

\section{Unipotent elements in \texorpdfstring{$\PSL_2(q), \; q \equiv 1 \pmod 4$}{PSL\_2(q), q = 1 (mod 4)}} 
\label{sec:unipotent_odd}

The goal of this section is to prove that $K_\calC$ is non-degenerate when $\calC$ is a real conjugacy class of unipotent elements in $G = \PSL_2(q)$ with $q \equiv 1 \pmod 4$. We do so by verifying \eqref{eq:2EC_cent} for these classes.
Note that by Table \ref{tab:unipotent_classes}, for $q \equiv 3 \pmod 4$ the conjugacy classes of unipotent elements are not real and thus cannot be addressed with the methods in this paper. 
For $q$ even, the unipotent elements are exactly the involutions. This case has already been tackled in \cite{PitermanRoelants}.

In the case that $q \equiv 1 \pmod 4$, $\PSL_2(q)$ has two real conjugacy classes of unipotent elements. As these can be mapped into one another by the application of a suitable automorphism of $\PSL_2(q)$, it suffices to study the non-degeneracy of the Killing form on one of these classes.

\begin{theorem}
    Let $\calC$ be a real conjugacy class of unipotent elements in $\PSL_2(q)$ with $q \equiv 1 \pmod 4$. Then $\calC$ satisfies \eqref{eq:2EC_cent}. In particular, $K_\calC$ is non-degenerate.
\end{theorem}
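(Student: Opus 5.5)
We verify \eqref{eq:2EC_cent} directly. As explained before the statement, it suffices to treat the class $\calC$ of $\Bar{X}$ with $X = \begin{pmatrix} 1 & 1 \\ 0 & 1\end{pmatrix}$, and we take $x = \Bar{X}$. Its centralizer is the Sylow $p$-subgroup
\[
C_G(\Bar{X}) = \left\{ \Bar{U}_t : t \in \F_q \right\}, \qquad U_t = \begin{pmatrix} 1 & t \\ 0 & 1 \end{pmatrix}.
\]
This holds because $BX = \pm XB$ forces $B$ to be upper unitriangular up to sign; the sign $-$ is impossible for $p$ odd.

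For $a_0$ we take a general element of $\calC$, using the parametrization from Equation \eqref{eq:squares_antidiagonal} in \autoref{lem:unipotent_distinct}:
\[
A = \begin{pmatrix} 1-\alpha\beta & \alpha^2 \\ -\beta^2 & 1+\alpha\beta \end{pmatrix}, \qquad \beta \neq 0 .
\]
A direct computation gives $\Tr(AU_t) = 2 - \beta^2 t$. Hence $\Bar{A}\Bar{U}_t$ is unipotent only if $t = 0$ or $t = 4/\beta^2$. In the second case, $-AU_t$ has trace $2$, lower-left entry $\beta^2$, and upper-right entry
\[
-\bigl((1-\alpha\beta)\tfrac{4}{\beta^2} + \alpha^2\bigr) = -\bigl(\alpha - \tfrac{2}{\beta}\bigr)^2 .
\]
Both anti-diagonal entries are squares up to the sign $-1$, and $-1$ is a square since $q \equiv 1 \pmod 4$. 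So \autoref{lem:unipotent_distinct} shows that this element also lies in $\calC$. Moreover it is nontrivial, because $\beta \neq 0$. Therefore
\[
\Bar{A}\, C_G(\Bar{X}) \cap \calC = \{ a_0, a_1 \}, \qquad a_0 = \Bar{A}, \quad a_1 = \Bar{A}\Bar{U}_{4/\beta^2},
\]
with $a_0 \neq a_1$. This establishes part (1) of \eqref{eq:2EC_cent}.

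For part (2) we need an element $g$ of odd order with ${}^g a_0 = a_1$. Since $a_0, a_1 \in \calC$, the set of elements conjugating $a_0$ to $a_1$ is a coset $h\, C_G(a_0)$. Here $C_G(a_0) = {}^{D}C_G(\Bar{X})$ is again a group of $q$ unipotent elements, written $\{\Bar{V}_s : s \in \F_q\}$ with $V_s = D U_s D^{-1}$, where $D$ conjugates $X$ to $A$. The trace $\Tr(H V_s)$ is an affine function of $s$. Whenever its linear coefficient is nonzero, we can choose $s$ with $\Tr(HV_s) = 2$. Then $g = \Bar{H}\Bar{V}_s$ is either trivial or unipotent, so its order is $1$ or $p$, which is odd in either case.

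I expect this last step to be the main obstacle: one must rule out, or avoid by a good choice of $\alpha, \beta$, the degenerate case in which the trace is constant on the coset. I would first fix a convenient specialization such as $\alpha = 0$, $\beta = 1$. Then
\[
a_0 = \overline{\begin{pmatrix} 1 & 0 \\ -1 & 1 \end{pmatrix}}, \qquad a_1 = \overline{\begin{pmatrix} -1 & -4 \\ 1 & 3 \end{pmatrix}},
\]
and I would write down an explicit conjugator $H$, compute the affine trace function, and check that its slope is nonzero. As a fallback, one can instead look for $s$ with $\Tr(HV_s) = \pm 1$, which gives an element of order $3$. Having established part (2), \autoref{lm:reduction2el} yields the non-degeneracy of $K_\calC$.
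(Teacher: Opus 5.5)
Your reduction to one class and your part (1) are correct, and they follow the paper's route. The paper uses the same $x=\Bar{X}$, the specific element $a_0=\overline{\begin{pmatrix}1&0\\1&1\end{pmatrix}}$, and finds $a_1$ from the condition $\Tr=\pm 2$ exactly as you do; your version for general $\alpha,\beta$ is fine. The gap is part (2), which is the only non-routine ingredient of \eqref{eq:2EC_cent}. You set up the right mechanism: conjugators form a coset $hC_G(a_0)$, the trace is affine on it, and trace $2$ gives odd order. But you never show that the slope is nonzero; you only say you would check it. Your fallback does not cover the case you are worried about: if the slope vanished, the trace would be constant on the whole coset and could not be tuned to $\pm1$ either. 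So, as written, the existence of an odd-order conjugator is not proved.

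The missing step is short. Take $\alpha=0$, $\beta=1$. Then $C_G(a_0)=\{\Bar{V}_s : s \in \F_q\}$ with $V_s=\begin{pmatrix}1&0\\ s&1\end{pmatrix}$, and for any $H=(h_{ij})\in\SL_2(q)$ we have $\Tr(HV_s)=h_{11}+h_{22}+s\,h_{12}$.
\begin{itemize}
\item Suppose $h_{12}=0$. Then $H$ is lower triangular, so $HAH^{-1}$ is lower triangular. But the representatives $\pm\begin{pmatrix}-1&-4\\1&3\end{pmatrix}$ of $a_1$ have upper-right entry $\mp4\neq0$, since $p$ is odd. Hence $h_{12}\neq0$.
\item Choosing $s=(2-h_{11}-h_{22})/h_{12}$ gives a conjugator of trace $2$. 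It is trivial or unipotent, hence of odd order.
\end{itemize}
The degenerate case never occurs for any $\alpha,\beta$. A zero slope would mean $\Bar{H}$ normalizes $C_G(a_0)$. Then $a_1\in C_G(a_0)$, so $1\neq a_0^{-1}a_1\in C_G(a_0)\cap C_G(\Bar{X})$. This is impossible, because distinct Sylow $p$-subgroups of $\PSL_2(q)$ intersect trivially.

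The paper does the same thing explicitly. It exhibits the involution $J=\begin{pmatrix}-i&2i\\0&i\end{pmatrix}$ with $i^2=-1$, which conjugates its $a_0$ to its $a_1$. It then multiplies $J$ by $\begin{pmatrix}1&0\\ i&1\end{pmatrix}\in C_G(a_0)$ to obtain a conjugator with characteristic polynomial $(t+1)^2$.
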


\begin{proof}
    The unipotent elements in $G = \PSL_2(q)$ with $q$ odd are represented by matrices $A \in \SL_2(q)$ with characteristic polynomial $\chi_A(t) = t^2 \pm 2t + 1$. 
    
    We may choose 
    \[
    A = \begin{pmatrix}
        1 & 1 \\
        0 & 1
    \end{pmatrix}.
    \]
    Then clearly
    \[
    C_G(\Bar{A}) = \bigg\{ \overline{\begin{pmatrix}
        \epsilon & x \\
        0 & \epsilon
    \end{pmatrix}} : x \in \F_q, \epsilon \in \{ \pm 1 \} \bigg\}. 
    \]

    Furthermore, by \autoref{lem:unipotent_distinct}, the conjugacy class $\calC$ contains, in particular, the element
    \[
    A_0 = \begin{pmatrix}
        1 & 0 \\
        1 & 1
    \end{pmatrix}.
    \]

    Our goal is to check the 2-element condition in order to apply \autoref{lm:reduction2el}.
    Hence we first show that  $\Bar{A}_0 C_G(\Bar{A}) \cap \calC = \{ \Bar{A}_0, \Bar{A}_1 \}$ for some $\Bar{A}_1 \in \calC$.

    For $B_{x, \epsilon} = \begin{pmatrix}
        \epsilon & x \\
        0 & \epsilon
    \end{pmatrix}$, we find $\Tr(A_0 B_{x, \epsilon}) = 2 \epsilon + x$.

    Therefore, $\Bar{A}_0 \Bar{B} \in \calC$ only if there is a choice of $x, \epsilon$ with $\Bar{B} = \Bar{B}_{x,\epsilon}$ such that this trace is equal to $2$. For $\epsilon = 1$, this implies $x=0$. So in this case, we get $B = B_{0,1} = I_2$.

    If $\epsilon = -1$, we get $x = 4$. Thus, we have
    \[
    \Bar{A}_0 C_G(\Bar{A}) \cap \calC = \{ \Bar{A}_0, \Bar{A}_0 \Bar{B}_{4, -1} \}.
    \]

    Hence,
    \[
    A_1 = A_0 B_{4, -1} = \begin{pmatrix}
        1 & 0 \\
        1 & 1
    \end{pmatrix} \begin{pmatrix}
        -1 & 4 \\
        0 & -1
    \end{pmatrix} = \begin{pmatrix}
        -1 & 4 \\
        -1 & 3
    \end{pmatrix}.
    \]

    Note that, as $q \equiv 1 \pmod 4$, the elements on the anti-diagonals of $A_0$ and $A_1$ are squares so indeed, by \autoref{lem:unipotent_distinct}, $\Bar{A}_0, \Bar{A}_1 \in \calC$.
    
    We now check that conditions (1) and (2) of \eqref{eq:2EC_cent} are satisfied for $x = \Bar{A}$, $a_0 = \Bar{A}_0$ and $a_1 = \Bar{A}_1$.

    Condition (1) is fulfilled by construction.
    For condition (2), one can verify that
    \[
    J = \begin{pmatrix}
        -i & 2i \\
        0 & i
    \end{pmatrix},
    \]
    where $i \in \F_q$ is a root of $-1$, is an element such that ${}^{\Bar{J}}\Bar{A}_0 = \Bar{A}_1$, but $\Bar{J}$ is an involution in $\PSL_2(q)$ and thus does not satisfy condition (2).
    However, it does imply that all elements conjugating $\Bar{A}_0$ to $\Bar{A}_1$ are contained in $\Bar{J} C_G(\Bar{A}_0)$.
    Note that this centralizer contains all elements represented by lower-triangular matrices with ones on the diagonal, that is, 
    \[
    \bigg\{ \overline{\begin{pmatrix}
        1 & 0 \\
        c & 1
    \end{pmatrix}} : c \in \F_q \bigg\} \subset C_G(\Bar{A}_0).
    \]
    For a matrix $B_c$ of the form
    \[
    B_c = \begin{pmatrix}
        1 & 0 \\
        c & 1
    \end{pmatrix} \ (c \in \F_q),
    \]
    we have
    \[
    J B_c 
    = \begin{pmatrix}
        -i & 2i \\
        0 & i
    \end{pmatrix} \begin{pmatrix}
        1 & 0 \\
        c & 1
    \end{pmatrix}
    = \begin{pmatrix}
        i + 2ic & 2i \\
        -ic & -i
    \end{pmatrix}.
    \]
    The characteristic polynomial of this matrix is $\chi_{J B_c}(t) = t^2 - 2ic t +1$. Therefore, for $c = i$, $\chi_{J B_i}(t) = (t + 1)^2$ which shows that $g = \Bar{J} \Bar{B}_i$ is a unipotent element in $\Bar{J} C_G(\Bar{A}_0)$ and, in particular, of odd order. As ${}^g\Bar{A}_0 = \Bar{A}_1$, condition (2) of \autoref{lm:reduction2el} is satisfied.
\end{proof}

\section{Semisimple elements in \texorpdfstring{$\PSL_2(q), q$}{PSL\_2(q), q} even}
\label{sec:semisimple_even}

In this section, we prove the non-degeneracy of the Killing form on a conjugacy class of semisimple elements in $\PSL_2(q) = \SL_2(q)$, for $q$ even. We start with the semisimple split elements.

\begin{theorem}
    Let $\calC$ be a conjugacy class of semisimple split elements in $G = \PSL_2(q)$ with $q$ even. Then $\calC$ satisfies \eqref{eq:1EC_cent}. In particular, $K_\calC$ is non-degenerate.
\end{theorem}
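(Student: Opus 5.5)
The plan is to verify \eqref{eq:1EC_cent} directly, mirroring the approach for $q$ odd. Since $q$ is even, $\PSL_2(q)=\SL_2(q)$ and no sign ambiguity arises. Let $\calC$ consist of the elements with trace $b=\xi+\xi^{-1}$, where $\xi\in\F_q^\times$ and $\xi\neq\xi^{-1}$ (equivalently $\xi\neq 1$, since we are in characteristic $2$). Note that $b\neq 0$: indeed $b=0$ would force $\xi^2=1$, hence $\xi=1$.

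\emph{Choice of $x$ and its centralizer.} Take as base point the diagonal matrix $A=\mathrm{diag}(\xi,\xi^{-1})\in\calC$. Because its eigenvalues are distinct, a matrix commuting with $A$ must preserve both eigenlines. Hence
\[
C_G(A)=\{D_t=\mathrm{diag}(t,t^{-1}) : t\in\F_q^\times\}.
\]

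\emph{Choice of $y$.} Semisimple classes are determined by the trace, so any matrix of determinant $1$ and trace $b$ lies in $\calC$. Rather than working with a general element $\begin{pmatrix}\alpha&\beta\\\gamma&b-\alpha\end{pmatrix}$, we restrict to those with vanishing $(1,1)$-entry. The motivation is the following computation. For the general element,
\[
\Tr\left(\begin{pmatrix}\alpha&\beta\\\gamma&b-\alpha\end{pmatrix}D_t\right)=\alpha t+(b-\alpha)t^{-1}.
\]
Setting this equal to $b$ and multiplying by $t$ gives a quadratic in $t$ with roots $1$ and $(b-\alpha)/\alpha$. For $\alpha\neq 0,b$, these two roots coincide only if $2\alpha=b$, which is impossible in characteristic $2$ because $b\neq 0$. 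So the quadratic case never yields uniqueness.

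The way out is the degenerate case $\alpha=0$, where the equation becomes linear. Concretely, we take
\[
A'=\begin{pmatrix}0&1\\1&b\end{pmatrix}.
\]
This matrix has trace $b$ and determinant $-1=1$, so $A'\in\calC$.

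\emph{Verifying uniqueness.} For every $t\in\F_q^\times$ we have $\Tr(A'D_t)=bt^{-1}$. Requiring $A'D_t\in\calC$ means $bt^{-1}=b$, and since $b\neq 0$ this forces $t=1$, that is, $D_t=I_2$. Therefore
\[
A'C_G(A)\cap\calC=\{A'\},
\]
which is exactly \eqref{eq:1EC_cent} with $x=A$ and $y=A'$. Proposition~\ref{lm:reduction1el} then gives non-degeneracy of $K_\calC$.

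The only real subtlety lies in this choice of $y$: the generic quadratic approach always produces the second solution $t=(b-\alpha)/\alpha\neq 1$. It must therefore be sidestepped by picking a representative whose diagonal has a zero entry. The remaining checks (the form of the centralizer and membership of $A'$ in $\calC$) are routine.
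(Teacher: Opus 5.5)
Your proof is correct and is essentially the paper's argument. Both use the base point $\mathrm{diag}(\xi,\xi^{-1})$ with the diagonal torus as centralizer, and a trace-$b$ representative with a zero diagonal entry, so that $\Tr(yD_t)$ is $b$ times a single power of $t$ and equals $b$ only for $t=1$. The paper takes $\begin{pmatrix} b&1\\1&0\end{pmatrix}$, which is your excluded case $\alpha=b$ (trace $bt$), instead of your $\begin{pmatrix}0&1\\1&b\end{pmatrix}$, an immaterial difference.
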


\begin{proof}
Recall from Table \ref{tab:semisimple} that if $q$ is even, then a conjugacy class of semisimple split elements in $\PSL_2(q) = \SL_2(q)$ is represented by all matrices $A$ with characteristic polynomial $\chi_A(t) = t^2 + bt + 1$, for a fixed $b \in \F_q^\times$ such that $b = \xi + \xi^{-1}$ for some $\xi \in \F_q$. 

We choose
\[
A = \begin{pmatrix}
    \xi & 0 \\
    0 & \xi^{-1}
\end{pmatrix} \in \calC.
\]
Then the centralizer of $A$ is of the form
\[
C_G(A) = \Bigg\{ \begin{pmatrix}
    \lambda & 0 \\
    0 & \lambda^{-1}
\end{pmatrix} : \lambda \in \F_q^\times \Bigg\}.
\]
We choose the element
\[
A_0 = \begin{pmatrix}
    b & 1 \\
    1 & 0
\end{pmatrix} \in \calC.
\]

In order to invoke the 1-element condition, we show for $B \in C_G(A)$ that if $A_0 B \in \calC$, or equivalently $\Tr(A_0 B) = b$, then $B = I_2$. 

Each element in $C_G(A)$ is of the form $B = B_{\lambda} = \begin{pmatrix}
    \lambda & 0 \\
    0 & \lambda^{-1}
\end{pmatrix}$, where $\lambda \neq 0$. Furthermore, we have $\Tr(A_0 B_{\lambda}) = \lambda b$. Hence, $A_0 B_{\lambda} \in \calC$ if and only if this trace is equal to $b$ which implies $\lambda = 1$ and thus, $B = B_1 = I_2$. Therefore  $A_0 C_G(A) \cap \calC = \{A_0\}$.

As \eqref{eq:1EC_cent} is satisfied, \autoref{lm:reduction2el} implies that $K_\calC$ is non-degenerate.
\end{proof}

Next, we study the case of conjugacy classes of semisimple non-split elements.

\begin{theorem}
    Let $\calC$ be a conjugacy class of semisimple non-split elements in $G = \PSL_2(q)$ with $q$ even. Then $\calC$ satisfies \eqref{eq:2EC_cent}. In particular, $K_\calC$ is non-degenerate.
\end{theorem}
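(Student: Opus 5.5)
The plan is to verify \eqref{eq:2EC_cent} with $x = A$ a convenient representative of $\calC$, and to use that in characteristic two the 1-element condition can never hold here, because the relevant conic always meets our line in a second point. The class $\calC$ consists of the matrices with characteristic polynomial $t^2+bt+1$, irreducible over $\F_q$. I would take the companion matrix
\[
A = \begin{pmatrix} 0 & 1 \\ 1 & b \end{pmatrix}.
\]
Since $\chi_A$ is irreducible, $C_G(A) = \{ xI_2 + yA : x^2+bxy+y^2 = 1\}$, which is a cyclic torus of order $q+1$ (odd). For any $A_0 \in \calC$ with $T := \Tr(A_0A)$, we get $\Tr(A_0(xI_2+yA)) = bx + Ty$. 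So $A_0(xI_2+yA) \in \calC$ exactly when $bx + Ty = b$ and $x^2+bxy+y^2=1$.

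Intersecting this line with the conic gives the point $(1,0)$ and at most one further point. Substituting $x = 1 + Ty/b$ yields $y\,(\kappa y + \text{linear term}) = 0$ with an explicitly computable second root. In the tangent-looking case $T=0$ one still gets $y^2+by=0$, so the second solution $y=b\neq 0$ appears. Hence $A_0C_G(A)\cap\calC = \{A_0, A_1\}$ with $A_1 = A_0(x_1I_2+y_1A)$. The first step is to choose $A_0$ (a generic $A_0 = \begin{pmatrix} a & c \\ d & b+a\end{pmatrix}$ in $\calC$) so that this second point really lies on the conic, the quadratic is non-degenerate, and $A_1 \neq A_0$. This gives condition (1) of \eqref{eq:2EC_cent}. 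I would avoid $A_0 = A^{\pm1}$. For that choice, $\{a_0,a_1\} = \{A,A^{-1}\}$, and every element conjugating $A^{-1}$ to $A$ lies in $N_G(C_G(A))\setminus C_G(A)$, so it is an involution and (2) fails.

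For condition (2), I fix some $h$ with ${}^hA_0 = A_1$, which exists since both lie in $\calC$. The set of all such conjugators is $hC_G(A_0) = \{h(xI_2+yA_0) : x^2+bxy+y^2=1\}$. In characteristic two, an element of $\SL_2(q)$ with nonzero trace is semisimple and not the identity, and so has odd order dividing $q\pm1$. Elements of trace $0$ are precisely the identity and the involutions. The trace on this coset is the linear form $x\Tr(h) + y\Tr(hA_0)$. Its zero set is a line through the origin, which meets the conic (with $q+1 \geq 3$ points) in at most two points. So some element of the coset has nonzero trace, and that element is the required $g$ of odd order.

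The main obstacle is excluding the case where this linear form vanishes identically, i.e.\ $\Tr(h) = \Tr(hA_0) = 0$. Then $h$ and $hA_0$ would both be involutions or trivial; since $A_0 \neq A_1$, $h$ is an involution inverting $A_0$, so ${}^hA_0 = A_0^{-1}$ and $A_1 = A_0^{-1}$. So I must show, by an explicit computation with the chosen $A_0$, that $A_1 \neq A_0^{-1}$, equivalently $A_0^{-1}\notin A_0C_G(A)$, i.e.\ $A_0^2\notin C_G(A)$. This should hold for any $A_0$ not commuting with $A$, since $A_0^2$ generates the same torus as $A_0$. Once this is settled, \autoref{lm:reduction2el} gives non-degeneracy of $K_\calC$.
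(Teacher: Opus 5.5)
Your proposal is correct and is essentially the paper's argument. It uses the same trace-plus-conic computation giving $A_0C_G(A)\cap\calC=\{A_0,A_1\}$, the same reduction of condition (2) to excluding the case where both $h$ and $hA_0$ are involutions (which forces $A_1=A_0^{-1}$), and the same odd-order argument $A_0\in\langle A_0^2\rangle$ to rule this out.

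Your counting over the conic is unnecessary: a linear form that is not identically zero is already nonzero at $(1,0)$ or $(0,1)$, i.e.\ at $h$ or $hA_0$, and these are exactly the paper's candidates for $g$.

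Like the paper, which needs some $\alpha\notin\{0,b\}$, you tacitly need $q\ge 4$ so that some $A_0\in\calC\setminus\{A,A^{-1}\}$ exists. For $q=2$ one has $\calC=\{A,A^{-1}\}$, and $K_\calC$ is in fact degenerate there.
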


\begin{proof}
    Recall from \autoref{tab:semisimple} that a conjugacy class of semisimple non-split elements in $\PSL_2(q) = \SL_2(q)$ with $q$ even is represented by all matrices $A$ with characteristic polynomial $\chi_A(t) = t^2 + bt + 1$, for a fixed $b \in \F_q^\times$ that can be written as $b = \xi + \xi^q$ for some $\xi \in \F_{q^2}^\times \setminus \F_q^\times$, where $\xi^{q+1}=1$. 
    Hence we may take
    \[
    A = \begin{pmatrix}
        b & 1 \\
        1 & 0
    \end{pmatrix} \in \calC.
    \]

    Then the centralizer of $A$ is of the form
    \[
    C_G(A) = \bigg\{ \begin{pmatrix}
        bx + y & x \\
        x & y
    \end{pmatrix} : x,y \in \F_q, x^2 + y^2 + bxy = 1 \bigg\}.
    \]

    Furthermore, the conjugacy class $\calC$ contains, in particular, the matrices of the form $A_{\alpha}$ ($\alpha \in \F_q$), where
    \[
    A_{\alpha} = \begin{pmatrix}
        \alpha & 1 + (b + \alpha) \alpha \\
        1 & b + \alpha
    \end{pmatrix}.
    \]

    Our goal is to show that $\calC$ satisfies the 2-element condition. We do so by proving first that for any $\alpha \neq 0$, we have $ A_{\alpha} C_G(A) \cap \calC = \{ A_{\alpha}, A'\}$ for some $A' \in \calC$.

    Each element in $C_G(A)$ is of the form $B = B_{x,y} = \begin{pmatrix}
        bx + y & x \\
        x & y
    \end{pmatrix}$. With this, we get
    \[
    \Tr( A_{\alpha} B_{x,y}) = b y + \alpha^2 x.
    \]
    Thus $A_{\alpha} B_{x,y} \in \calC$ if and only if this trace is equal to $b$ which is the case if and only if $x = \frac{y+1}{\alpha^2} b$. Plugging this into the equation $x^2 + y^2 + bxy = 1$, we get
    \[
    \frac{y^2 + 1}{\alpha^4} b^2 + y^2 + \frac{y+1}{\alpha^2} y b^2 = 1,
    \]
    or equivalently,
    \[
    \Big( \frac{b^2}{\alpha^4} + \frac{b^2}{\alpha^2} + 1 \Big) y^2 + \frac{b^2}{\alpha^2} y + \frac{b^2}{\alpha^4} + 1 = 0.
    \]
    As $b \neq 0$, the linear coefficient of this equation does not vanish. Therefore, being of degree at most $2$, it has at most two solutions.
    One of them is $y=1$, which implies $x=0$ and thus $B = B_{0,1} = I_2$. As there is at most one other solution, we have proven that for $\alpha \neq 0$, we have $A_{\alpha} C_G(A) \cap \calC = \{ A_{\alpha}, A'\}$ for some $A'$, not necessarily different from $A_{\alpha}$.

    If $A_{\alpha} = A'$, clearly \eqref{eq:1EC_cent} is satisfied and the non-degeneracy of $K_{\calC}$ follows directly from \autoref{lm:reduction1el}.

    Suppose from now on that $\alpha \not\in \{0,b \}$ and that $A' \neq A_{\alpha}$. We prove first that $A' \neq A_{\alpha}^{-1}$. Assuming otherwise, we would obtain $A_{\alpha} \in A_{\alpha}^{-1}C_G(A)$ and thus, $A_{\alpha}^2 \in C_G(A)$. As $A_{\alpha}$ is \emph{not} an involution in $\PSL_2(q)$, it follows that $A_{\alpha}$ has odd order, therefore $A_{\alpha} \in \langle A_{\alpha}^2 \rangle \leq C_G(A)$. However, this would imply $A_{\alpha} = B_{x,y}$ for some $x,y \in \F_q$, and a comparison of the anti-diagonal entries quickly shows that this is only the case if $\alpha(\alpha + b) = 0$. This implies $\alpha \in \{0, b\}$, a contradiction! Therefore, $A' \neq A_{\alpha}^{-1}$.

    We now check that the requirements for \eqref{eq:2EC_cent} are satisfied for $x = A$, $a_0 = A_{\alpha}$ and $a_1 = A_1 = A'$. 
    
    Condition (1) is fulfilled by construction. 
    We prove that we can find an element $g$ of odd order as in condition (2) of \autoref{lm:reduction2el}. 
    Take any element $h \in G$ such that ${}^hA_0 = A_1$. If the order of $h$ is odd, we can take $g=h$.
    If the order of $h$ is even, then $h$ must be an involution, as these are the only elements of even order in $\PSL_2(q)$ when $q$ is even.
    Then ${}^{hA_0} A_0 = {}^h A_0 = A_1$.
    
    Suppose the order of $hA_0$ is also even, so $hA_0$ is an involution as well. 
    Then, as $h = h^{-1}$, we obtain $1 = (h A_0)^2 = {}^h A_0 A_0 = A_1 A_0$, and thus $A_1 = A_0^{-1}$, a contradiction. 
    Therefore, the order of $hA_0$ must be odd and we may take $g = hA_0$ in order to satisfy condition (2) in \autoref{lm:reduction2el}. It then follows that $K_\calC$ is non-degenerate.
\end{proof}

The results in Sections \ref{sec:semisimple_odd}, \ref{sec:unipotent_odd} and \ref{sec:semisimple_even}, as well as the results for unipotent elements in $\PSL_2(q)$ with $q$ even in \cite{PitermanRoelants}, prove \autoref{thm:PSL2_all_nondeg}.

\section{Classes in \texorpdfstring{$\Sym_n$}{Sn} and \texorpdfstring{$\Alt_n$}{An}} \label{sec:symm_involutions}

In this section, we will investigate the non-degeneracy of Killing forms for certain real conjugacy classes in the symmetric and the alternating groups.

For $n \geq 1$, we use $[n]$ to denote the set $\{1, 2, \dots, n\}$.

Throughout this section, we will make repeated use of the following, well-known lemma:

\begin{lemma} \label{lem:conjugacy_in_alternating_groups}
    Let $n \geq 2$ and let $x \in \Alt_n$. Then the conjugacy class of $x$ in $\Sym_n$ coincides with the conjugacy class of $x$ in $\Alt_n$ if and only if the cycle decomposition of $x$ does not consist of odd cycles of pairwise different sizes, $1$-cycles included.

    If the cycle decomposition of $x$ only involves odd cycles of pairwise different sizes, then the conjugacy class of $x$ in $\Sym_n$ is a union of two different conjugacy classes in $\Alt_n$. Moreover, for $g \in \Sym_n$, the elements $x$ and ${}^gx$ are conjugate in $\Alt_n$ if and only if $g \in \Alt_n$.
\end{lemma}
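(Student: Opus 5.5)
This is the standard splitting criterion, and I would prove it via the orbit–stabilizer count.

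Conjugation by $\Sym_n$ acts transitively on the $\Sym_n$-class $x^{\Sym_n}$, with stabilizer $C_{\Sym_n}(x)$. The subgroup $\Alt_n$ has index $2$. Its orbit through $x$ is $x^{\Alt_n}$, of size $|\Alt_n : C_{\Alt_n}(x)|$, where $C_{\Alt_n}(x) = C_{\Sym_n}(x)\cap \Alt_n$. Hence:
\begin{itemize}
\item if $C_{\Sym_n}(x)\not\le \Alt_n$, then $|C_{\Alt_n}(x)| = \tfrac12|C_{\Sym_n}(x)|$, and $|x^{\Alt_n}| = |x^{\Sym_n}|$, so the two classes coincide;
\item if $C_{\Sym_n}(x)\le\Alt_n$, then $x^{\Alt_n}$ has half the size. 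Since $\Alt_n$ is normal, any $\tau\in\Sym_n\setminus\Alt_n$ maps $\Alt_n$-orbits to $\Alt_n$-orbits, and $x^{\Sym_n}=x^{\Alt_n}\cup {}^\tau x^{\Alt_n}$ is a union of exactly two $\Alt_n$-classes.
\end{itemize}

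For the ``moreover'' part in the second case, suppose ${}^gx={}^hx$ with $h\in\Alt_n$. Then $h^{-1}g\in C_{\Sym_n}(x)\le \Alt_n$, so $g\in \Alt_n$. The converse is trivial. So everything reduces to the claim that $C_{\Sym_n}(x)$ contains an odd permutation if and only if the cycle type of $x$ is \emph{not} a set of pairwise distinct odd lengths, counting $1$-cycles.

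For the ``if'' direction, there are two cases.
\begin{itemize}
\item If $x$ has a cycle $c$ of even length, then $c$ commutes with $x$ and is an odd permutation.
\item If $x$ has two cycles $(a_1\ \cdots\ a_m)$ and $(b_1\ \cdots\ b_m)$ of the same odd length $m$ (possibly $m=1$, i.e.\ two fixed points), take $\sigma=\prod_i (a_i\ b_i)$. It commutes with $x$ because it swaps the two cycles compatibly. It is a product of $m$ transpositions, hence odd.
\end{itemize}

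The ``only if'' direction is the main step. Suppose the cycles $c_1,\dots,c_r$ of $x$ have pairwise distinct odd lengths. Any $g\in C_{\Sym_n}(x)$ permutes the cycles of $x$ preserving length, so it fixes each support $\mathrm{supp}(c_i)$ setwise. Its restriction to that support commutes with the cycle $c_i$. The centralizer of an $m$-cycle in $\Sym_m$ is $\langle c_i\rangle$, so the restriction is a power of $c_i$. Hence $g\in\langle c_1,\dots,c_r\rangle$, which consists of even permutations since each $c_i$ is an odd-length cycle. Thus $C_{\Sym_n}(x)\le \Alt_n$.

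The only point needing care is the identification of the centralizer of a full cycle. This follows from orbit–stabilizer: the $m$-cycles in $\Sym_m$ number $(m-1)!$, so the centralizer has order $m$ and equals $\langle c_i\rangle$.
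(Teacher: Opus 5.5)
Your proof is correct and complete. It takes a different route from the paper only in the sense that the paper gives no argument for the first two statements: it cites Scott (Theorem 11.1.5). It then obtains the last statement from the index-two property. For an odd $\tau$, the $\Sym_n$-class is the union of the two distinct $\Alt_n$-classes $x^{\Alt_n}$ and $({}^\tau x)^{\Alt_n}$, and ${}^g x$ falls into the first exactly when $g \in \Alt_n$.

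You instead give the standard self-contained argument. Orbit--stabilizer reduces everything to the single criterion $C_{\Sym_n}(x) \not\le \Alt_n$. You then settle this criterion from the cycle type: an even cycle, or a matched swap of two equal odd cycles, gives an odd centralizing element. For pairwise distinct odd lengths, the centralizer is $\langle c_1, \dots, c_r\rangle \le \Alt_n$.

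Your route makes the lemma independent of the external reference at the cost of a few lines. It also makes the ``moreover'' clause a one-line consequence of $C_{\Sym_n}(x) \le \Alt_n$. The paper's version of that clause instead leans on the cited fact that the class splits into two different $\Alt_n$-classes.
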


\begin{proof}
    See \cite[Theorem 11.1.5]{Scott} for the proof of the first two statements, whereas the last statement is an immediate consequence, due to the fact that $\Alt_n$ has index $2$ in $\Sym_n$.
\end{proof}

In this section, we prove \autoref{thm:symm_involutions_summary} by verifying the 1-element condition in the form of \eqref{eq:1EC_conj} in all relevant cases. Note that in the case of a class of transpositions, this result has already been proven in \cite{LopezPenaMajidRietsch}. 
On the other hand, in \cite{PitermanRoelants}, it is shown that for the conjugacy class of involutions with exactly one fixed point, the matrix associated to the Killing form can be decomposed as a diagonal block matrix, where the blocks are formed by grouping together elements that have the same unique fixed point. However, despite this diagonal block decomposition, the question of whether the matrix is invertible was left unanswered. 

\begin{theorem} \label{thm:sym_involutions}
    Suppose $n \geq 3$ and let $\calC$ be the conjugacy class of the involution $x = (1 \ 2) (3 \ 4) \dots (2k-1 \ 2k)$ in $\Sym_{n}$ or $\Alt_{n}$, for $1 \leq k \leq \lfloor \frac{n}{2} \rfloor$.
    Except when $n=4$ and $k = 2$, $\calC$ satisfies \eqref{eq:1EC_conj}. In particular, the Killing form $K_\calC$ is non-degenerate.
\end{theorem}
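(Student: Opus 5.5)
The plan is to verify \eqref{eq:1EC_conj} directly. Writing $\calC_{z,x}=\{w\in\calC : wxw=z\}$, I would exhibit explicit $x,y\in\calC$, set $z={}^yx=yxy$, and show that $y$ is the only involution of the given cycle type conjugating $x$ to $z$. Then \autoref{lm:reduction1el} gives non-degeneracy.

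\textbf{Reduction to $\Sym_n$.} An involution always has $2$-cycles in its cycle decomposition, so by \autoref{lem:conjugacy_in_alternating_groups} its $\Sym_n$-class and its $\Alt_n$-class coincide whenever $x\in\Alt_n$. Since $\calC_{z,x}$ is defined purely in terms of the set $\calC$, the $\Alt_n$ case is literally the same computation. So I work with $\Sym_n$ throughout.

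\textbf{Combinatorial model.} Identify an involution $w$ of type $(2^k,1^{n-2k})$ with the partial perfect matching $M_w$ of its $2$-cycles. The condition $wxw=z$ says exactly that the permutation $w$ maps $M_x$ onto $M_z$, and therefore also maps $\mathrm{Fix}(x)$ onto $\mathrm{Fix}(z)$. I would choose $y$ so that the union graph $M_x\cup M_z$, together with the fixed-point sets, is rigid enough to force $w=y$.

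\textbf{Case $n\ge 2k+1$.} Take $x=(1\,2)(3\,4)\cdots(2k-1\;2k)$ and $y=(2\,3)(4\,5)\cdots(2k\;\,2k+1)$. Then $M_z=\{\{1,3\},\{2,5\},\{4,7\},\dots,\{2k-2,\,2k+1\}\}$, so $z$ moves $\{1,\dots,2k+1\}\setminus\{2k\}$. The graph $M_x\cup M_z$ is a single path on $\{1,\dots,2k+1\}$ with alternating edge colours and endpoints $2k$ and $2k+1$ (it is $2k\!-\!(2k-1)\!-\cdots-\!4\!-\!3\!-\!1\!-\!2\!-\!5\!-\!6\!-\cdots$ up to parity).

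The uniqueness argument would go as follows.
\begin{itemize}
\item $w$ maps the point $2k+1\in\mathrm{Fix}(x)$ to the unique point $2k\in\mathrm{Fix}(z)$ lying in the support of $x$, or more generally $w$ must send the support of $x$ onto the support of $z$.
\item Since $w$ is an involution, this forces $w$ to swap $2k$ and $2k+1$.
\item Propagating along the path using $w(M_x)=M_z$ and $w^2=1$ then forces each further transposition $(2i\;\,2i+1)$.
\item These $k$ transpositions exhaust the allowed $2$-cycles, so $w$ fixes everything else and $w=y$.
\end{itemize}
For $k=1$ this is the familiar transposition case $x=(1\,2)$, $y=(2\,3)$, $z=(1\,3)$.

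\textbf{Case $n=2k$.} Here there are no fixed points to exploit, and the shifted $y$ is unavailable. I would instead take $y$ to be a cyclic shift of the pairing, for example $y=(2\,3)(4\,5)\cdots(2k-2\;\,2k-1)(2k\;\,1)$. Then $M_x\cup M_z$ is an alternating cycle, possibly together with a few further components, and I would try to break its dihedral symmetry using the constraint that $w$ itself has exactly $k$ two-cycles. If the symmetry cannot be broken this way, I would perturb $y$ on a couple of pairs so that $M_x\cup M_z$ has components of distinct sizes, which any conjugating $w$ must preserve. Small cases such as $n=6$ I would check by hand. The case $n=4$, $k=2$ is excluded; there the class has three elements which all commute and pairwise conjugate trivially, so \eqref{eq:1EC_conj} visibly fails.

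\textbf{Main obstacle.} The step I expect to be hardest is the uniqueness argument: ruling out every other $w\in\calC$ with $w(M_x)=M_z$. Such a $w$ need not preserve $M_y$, only the matchings $M_x$ and $M_z$. I would handle this by analysing the automorphisms of the $2$-edge-coloured graph $M_x\cup M_z$, with coloured fixed points, that can be realised by an involution of the right type. Paths have at most one nontrivial symmetry, namely reversal, and the goal is to show that this reversal is incompatible with the colour and fixed-point constraints. The even case $n=2k$ is the most delicate part of this.
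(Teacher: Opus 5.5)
Your reduction to $\Sym_n$ and your treatment of $n\ge 2k+1$ are correct. This is the paper's argument with conjugator and target interchanged: the paper fixes the target $(2\;3)(4\;5)\cdots(2k\;\,2k+1)$ and shows that the unique conjugator in $\calC$ is $(1\;\,2k+1)(2\;\,2k)\cdots(k\;\,k+2)$. One clarification on your uniqueness step. Since $w=w^{-1}$, any $w\in\calC_{z,x}$ interchanges $M_x$ and $M_z$. So on $[2k+1]$ it is a colour-interchanging automorphism of the alternating path, i.e.\ the path's reversal, which is $y$. What gets excluded is the identity, not the reversal.

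The genuine gap is the case $n=2k$. You leave it at ``try this, otherwise perturb'', and your default candidate fails whenever $k$ is even. For $y=(2\;3)(4\;5)\cdots(2k-2\;\,2k-1)(2k\;\,1)$ one finds $M_z=\{\{2j,2j+3\} : j\in[k]\}$ (indices mod $2k$). The components of $M_x\cup M_z$ are traced by $1,2,5,6,9,10,\dots$, so their shape depends on the parity of $k$.

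For $k$ odd, $M_x\cup M_z$ is a single alternating $2k$-cycle. Its colour-interchanging reflections each fix two opposite vertices, and the only colour-interchanging fixed-point-free involution is the antipodal rotation. Hence $\calC_{z,x}=\{y\}$, and your idea of breaking the dihedral symmetry via the cycle type of $w$ works.

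For $k$ even, $M_x\cup M_z$ is the disjoint union of two alternating $k$-cycles, on $\{v\equiv 1,2 \pmod 4\}$ and on $\{v\equiv 3,0 \pmod 4\}$. Each of the $k$ colour-interchanging isomorphisms $\varphi$ from the first onto the second, extended by $\varphi^{-1}$, is a fixed-point-free involution conjugating $x$ to $z$. For $n=8$, for example, both $y$ and $(1\;4)(2\;7)(3\;6)(5\;8)$ work. So $|\calC_{z,x}|\ge k\ge 4$, and no constraint on the cycle type of $w$ can restore uniqueness. Your fallback of giving the components distinct sizes is also not enough on its own. Each component must in addition admit exactly one colour-interchanging fixed-point-free involution.

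What is missing is exactly this parity split, plus a reduction for even $k$; this is what the paper does. For $k$ even (so $k\ge 4$), take $y$ containing the $2$-cycle $(n-1\;\,n)$ of $x$ and equal to the cyclic shift on $[n-2]$. The paper uses $(1\;\,n-2)(2\;3)\cdots(n-4\;\,n-3)(n-1\;\,n)$, as target. Then $M_x\cup M_z$ consists of a doubled edge on $\{n-1,n\}$ and a single alternating cycle of length $2(k-1)$ on $[n-2]$, with $k-1$ odd. Any $w\in\calC_{z,x}$:
\begin{enumerate}
\item preserves these components, since $2\neq 2k-2$;
\item swaps $n-1$ and $n$, because it has no fixed points;
\item is forced on $[n-2]$ by the odd case.
\end{enumerate}
For odd $k$ the paper likewise uses the cyclic shift, as target, with unique conjugator the antipodal involution $(1\;\,k+1)(2\;\,k+2)\cdots(k\;\,2k)$. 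With this split added, your argument becomes a complete proof.
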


\begin{proof}
    The conclusion for the alternating groups follows easily from that for the symmetric groups since the $\Alt_{2n}$-conjugacy class $\calC$ of an involution coincides with its $\Sym_{2n}$-conjugacy class by \autoref{lem:conjugacy_in_alternating_groups}.

    Throughout this proof, equality is to be understood modulo $n$, that is, for all $i \in [n]$, we consider $i+n$ to be equal to $i$.

    \smallskip

    First suppose that either $2k \neq n$ or $k$ is odd. 
    Let $y = (2 \ 3) (4 \ 5) \dots (2k \ 2k+1) \in \calC$. Remark that if $n = 2k$, then $2k+1 = 1$.
    Thus for $i \in [2k]$ and $j \in \{2, \dots, 2k+1\}$,
    \begin{align}
        x(i) &= \begin{cases}
            i+1 &\text{ if $i$ is odd}, \\
            i-1 &\text{ if $i$ is even},
        \end{cases} \label{eq:symmetric_involution_x} \\
        y(j) &= \begin{cases}
            j-1 &\text{ if $j$ is odd}, \\
            j+1 &\text{ if $j$ is even}.
        \end{cases} \label{eq:symmetric_involution_y}
    \end{align}
    
    We prove that there exists a unique $z \in \calC$ such that ${}^zx = y$, or equivalently,
    \[
    \big(z(1) \ z(2) \big) \big( z(3) \ z(4) \big) \dots \big( z(2k-1) \ z(2k) \big) = (2 \ 3)(4 \ 5) \dots (2k \ 2k+1).
    \]
    Equations \eqref{eq:symmetric_involution_x} and \eqref{eq:symmetric_involution_y} then imply that for $i \in [2k]$,
    \begin{equation} \label{eq:symmetric_involution_z}
        z(i) = \begin{cases}
            z(i+1) - 1 &\text{ if $i$ is odd and $z(i)$ is even}, \\
            z(i+1) + 1 &\text{ if $i$ is odd and $z(i)$ is odd}, \\
            z(i-1) - 1 &\text{ if $i$ is even and $z(i)$ is even}, \\
            z(i-1) + 1 &\text{ if $i$ is even and $z(i)$ is odd}.
        \end{cases}
    \end{equation}

    Set $m = z(1)$.
    Note that, if $2k < n$, $m$ must be $2k+1$: if possible, $2k+1 < m  \leq n$ implies that $y$ has a 2-cycle containing $z(1) = m$, a contradiction as $y$ only moves the points $\{2, \dots, 2k+1\}$.
    On the other hand, if $m < 2k+1$, we have $z(m) = 1$ as $z$ is an involution. Because $m < 2k+1$, this forces $y$ to have a 2-cycle containing $z(m) = 1$, again a contradiction. Hence $m=2k+1$ is the only remaining possibility.
    
    Going back to the general case, that is, when $2k<n$ or $k$ is odd, we now claim that if $m$ is odd, then $z$ is the involution consisting of the transpositions $(j \ m+1-j)$ for all $1 \leq j \leq k$. If $m$ is even, we claim $z$ consists of the transpositions $(j \ m+j-1)$ for all $1 \leq j \leq k$. We prove this claim by induction on $j$. 
    Clearly the statement holds for $j = 1$. 
    
    Suppose then that $m$ is odd and $z$ contains the transposition $(j-1 \ m+2-j)$. 
    If $j-1$ is odd, then so is $m+2-j$. By Equation \eqref{eq:symmetric_involution_z}, 
    \[
    z(j-1) = z(j) + 1 \Longrightarrow z(j) = z(j-1) - 1 = m+1-j.
    \]
    If $j-1$ is even, then so is $m+2-j$ and thus again by Equation \eqref{eq:symmetric_involution_z},
    \[
    z(m+2-j) = z(m+1-j) - 1 \Longrightarrow z(m+1-j) = z(m+2-j) + 1 = j.
    \]
    Note that, if $2k < n$, we already have $m$ fixed to be $2k+1$ and so for $1 \leq j \leq k$, $m+1-j \in \{k+2, \dots, 2k+1\}$. 

    Now assume $m$ is even and $z$ contains the transposition $(j-1 \ m+j-2)$. This case immediately implies that $2k=n$, as even $m$ does not occur for $2k < n$.
    If $j-1$ is odd, then $m-2+j$ is even. Hence by Equation \eqref{eq:symmetric_involution_z},
    \[
    z(j-1) = z(j) - 1 \Longrightarrow z(j) = z(j-1) + 1 = m+j-1.
    \]
    If $j-1$ is even, then $m-2+j$ is odd and so
    \[
    z(m-2+j) = z(m-1+j) - 1 \Longrightarrow z(m-1+j) = z(m-2+j) + 1 = j.
    \]
    This concludes the proof of our claim.

    \smallskip

    We first consider the case $2k < n$. As mentioned before, in this case $m=2k+1$ and so, by our claim, $z$ is uniquely determined to be of the form
    \[
    z = (1 \ 2k+1) (2 \ 2k) \dots (k \ k+2).
    \]

    Next, we consider the case $n=2k$ and $k$ is odd. Remark that then $x,y$ and $z$ do not have any fixed points.
    If $m$ is odd, then by our previous claim, for $j = \frac{m+1}{2}$, $z$ contains the cycle $\Big( \frac{m+1}{2} \ m + 1 - \frac{m+1}{2} \Big) = \Big( \frac{m+1}{2} \frac{m+1}{2} \Big)$. Hence $\frac{m+1}{2}$ is a fixed point of $z$, a contradiction.
    
    We conclude that $m$ must be even. 
    For $j = m-1$, we now have that $z(m-1) = 2m-2$. Furthermore, as $m$ is even and $z(m) = 1$, $z(m) = z(m-1) + 1$ and thus $z(m-1) = z(m)-1 = n = 2k$, as we consider the equality modulo $n$. Thus
    \[
    2m-2 = 2k \Longrightarrow m = 1 \text{ or } m = k+1.
    \]
    If $m=1$, then $1$ is a fixed point of $z$, a contradiction as $z$ does not have any fixed points. Therefore, $m = k+1$ remains as the only possibility, and by our claim, this completely determines $z$ to be of the form
    \[
    z = (1 \ k+1) (2 \ k+2) \dots (k \ 2k).
    \]

    Finally, we consider the case $n = 2k$ and $k>2$ is even. 
    Let
    \[
    y = (1 \ n-2)(2 \ 3) \dots (n-4 \ n-3)(n-1 \ n) \in \calC .
    \]
    
    We prove that there is a unique $z \in \calC$ such that ${}^zx = y$, or equivalently, 
    \begin{multline*}
        \big(z(1) \ z(2) \big) \big( z(3) \ z(4) \big) \dots \big(z(n-3) \ z(n-2) \big) \big( (z(n-1) \ z(n) \big) \\
        = (1 \ n-2)(2 \ 3) \dots (n-4 \ n-3) (n-1\ n).
    \end{multline*}

    Suppose $z(n) = m$ with $m \in [n-2]$. As $z$ is an involution, this implies $z(m) = n$. 
    Hence by Equation \ref{eq:symmetric_involution_x}, ${}^zx$ contains the cycle
    \[
    \begin{cases}
        \big( z(m-1) \ z(m) \big) = \big( z(m-1) \ n \big) &\text{if $m$ is even}, \\
        \big( z(m) \ z(m+1) \big) = \big( n \ z(m+1) \big) &\text{if $m$ is odd}.
    \end{cases}
    \]
    Thus $z(n-1) = m-1$ if $m$ is even and $z(n-1) = m+1$ if $m$ is odd. 
    But then ${}^zx = y$ also must contain the cycle
    \[
    \big( z(n-1) \ z(n) \big) = \begin{cases}
        (m-1 \ m) &\text{if $m$ is even}, \\
        (m \ m+1) &\text{if $m$ is odd},
    \end{cases}
    \]
    which is a contradiction with Equation \eqref{eq:symmetric_involution_y}.

    Since $z$ has no fixed points, $z(n) \neq n$ which forces $z(n) = n-1$ and thus $(n-1 \ n)$ is a cycle contained in $x,y$ and $z$. 

    Let $x' = (n-1 \ n) x$, $y' = (n-1 \ n) y$ and $z' = (n-1 \ n) z$. We can then view $x', y'$ and $z'$ as involutions with no fixed points in $\Sym_{n-2}$. We denote their conjugacy class in $\Sym_{n-2}$ by $\calC'$. 
    By the case of $n=2k$ with $k$ odd, there is a unique choice for $z' \in \calC'$ such that ${}^{z'}x' = y'$. 
    Therefore, $z$ is unique in $\calC$ such that ${}^zx = y$.

    In all cases, we conclude that $\calC$ satisfies \eqref{eq:1EC_conj} and thus, by \autoref{lm:reduction1el}, $K_\calC$ is non-degenerate.
\end{proof}

\begin{remark}
    The case $n = 4$ and $k=2$ is excluded in \autoref{thm:sym_involutions}, as then in the proof, we cannot reduce to $\Sym_{n-2}$. In fact, one can easily verify that for $\Sym_4$ and $\Alt_4$, the conjugacy class of involutions with no fixed points has a degenerate Killing form. Of course, these groups are not simple and fall outside of the scope of \autoref{conj:non-degeneracy}.
\end{remark}

In the remainder of this section, we focus on conjugacy classes of cycles of odd length in the symmetric and the alternating groups. The results can be summarized as follows.

\begin{theorem} \label{theorem:nondegeneracy_odd_cycles}
    Let $m,n$ be integers with $n \geq 5$, $m = 2k+1$ for some $k \geq 1$ and $n \geq m$. Let $\calC \subseteq \Alt_n$ be the conjugacy class of the permutation $x = (1\ 2\ \ldots \ m)$. If $\calC$ is a real conjugacy class, $K_{\calC}$ is non-degenerate.
\end{theorem}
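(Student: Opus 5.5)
We plan to verify one of the combinatorial conditions of \autoref{prop:reduction_summary}, preferably \eqref{eq:1EC_conj}, and then apply \autoref{lm:reduction1el} or \autoref{lm:reduction2el}. Throughout, $x=(1\ 2\ \ldots\ m)$. By \autoref{lem:conjugacy_in_alternating_groups}, for $n\ge m+2$ the $\Alt_n$-class $\calC$ of $x$ equals its $\Sym_n$-class. For $n\in\{m,m+1\}$ the $\Sym_n$-class splits into two $\Alt_n$-classes, and $\calC$ is real exactly when the reversal $(2\ m)(3\ m-1)\cdots$, a product of $k$ transpositions, is even, i.e.\ $k$ even.

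Fix a target $y\in\calC$ and some $w\in\Sym_n$ with ${}^wx=y$. The set of all $z\in\Sym_n$ with ${}^zx=y$ is the coset $w\,C_{\Sym_n}(x)=w\,(\langle x\rangle\times\Sym([n]\setminus[m]))$. The task is therefore to choose $y$ so that exactly one element of this coset is an $m$-cycle lying in $\calC$; membership in $\calC$ is decided by parity via \autoref{lem:conjugacy_in_alternating_groups} when the class splits. We split into two cases.

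\emph{Case $n\ge m+1$.} Use the extra point and take $y=(2\ 3\ \ldots\ m\ m{+}1)$, a shift of the support. Every $z$ with ${}^zx=y$ maps $[m]$ onto $\{2,\dots,m+1\}$ and has the form $j\mapsto j+1+i$ (indices cyclic in $\{2,\dots,m+1\}$) on $[m]$, with an arbitrary bijection $[n]\setminus[m]\to([n]\setminus[m]\cup\{1\})\setminus\{m+1\}$. Since $z$ is an $m$-cycle, it fixes $n-m$ points; we track the orbit of $1$ through $2,3,\dots$, which is forced by the choice of $i$. The plan is to show that the cycle through $1$ closes up after exactly $m$ steps and leaves all other points fixed for precisely one value of $i$ and one choice of the bijection on the complement. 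This should be analogous to the transposition-chasing argument in \autoref{thm:sym_involutions}. If two candidates survive instead, we fall back to \eqref{eq:2EC_conj}: they are conjugate under an element normalizing the configuration, and we look for one of odd order, for instance a power of an $m$-cycle.

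\emph{Case $n=m$ (with $k$ even).} Here $z$ preserves $[m]$, so $z=wx^i$. Since $z=\mathrm{id}$ only when $y=x$, the problem becomes the following: identify $[m]$ with $\mathbb{Z}/m$, so that $x$ acts as $t\mapsto t+1$, and find an $m$-cycle $w$ such that $t\mapsto w(t+i)$ is an $m$-cycle for exactly one $i$ (which we may take to be $i=0$), and that one has the parity required by $\calC$. This is the main obstacle. Affine choices $w(t)=at+b$ fail: for $a\ne1$ with $a-1$ invertible there is a fixed point, and for $a=1$ all translates are $m$-cycles. A non-affine $w$ is therefore needed. What we would try first is a $w$ that agrees with a translation except on a short block, for example $w=x\cdot(1\ 2\ 3)$ or $w=x$ composed with a transposition pair. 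Computing the cycle type of $wx^i$ directly, we expect it to acquire short cycles for every $i\ne0$. Should uniqueness fail, we aim for exactly two $m$-cycles in the coset that are conjugate by a power of $x$, which has odd order $m$, and conclude via \autoref{lm:reduction2el}.

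Once a suitable $y$ is found in each case, \autoref{lm:reduction1el} or \autoref{lm:reduction2el} gives non-degeneracy of $K_\calC$. The same witnesses also handle the $\Sym_n$ statement of \autoref{thm:alt_cycles_summary}, since uniqueness in the $\Sym_n$-class implies uniqueness in any subclass containing it.
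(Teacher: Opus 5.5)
Your reduction to the coset $w\,C_{\Sym_n}(x)$ is correct, but the witness you commit to for $n\ge m+1$ provably fails. Suppose ${}^zx=y=(2\ 3\ \ldots\ m{+}1)$. Then $z$ maps $[m]$ onto $\{2,\dots,m+1\}$ with $z(j)\equiv j+z(1)-1 \pmod m$, and it maps $m+1$ into $\{1,m+2,\dots,n\}$. If $z(1)\neq m+1$, then $z$ has no fixed point in $[m]$ and also moves $m+1$. It therefore moves at least $m+1$ points and is not an $m$-cycle. Hence $z(1)=m+1$, which forces $z$ to fix $2,\dots,m$. Then $z=(1\ m{+}1\ a_1\ \ldots\ a_{m-2})$ with distinct $a_i\in\{m+2,\dots,n\}$, and every such $z$ lies in $\calC$. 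Consequently $\calC_{y,x}=\emptyset$ for $n\le 2m-2$, in particular for $n=m+1$. For $n\ge 2m-1$ it has $(n-m-1)!/(n-2m+1)!$ elements, which equals $1$ only for $(m,n)=(3,5)$ and $2$ only for $(m,n)=(3,6)$. So your ``closes up for exactly one choice'' step is false, and the \eqref{eq:2EC_conj} fallback does not rescue it. Moving the support lets the factor $\Sym([n]\setminus[m])$ of the centralizer either destroy or multiply the candidates. The extra points must be neutralized, by taking $y$ with the same support as $x$, not exploited.

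For $n=m$ you correctly reduce to finding an $m$-cycle $w\in\calC$ with $wx^i\notin\calC$ for $i\neq 0$, but you never produce one, and that is the whole content of the theorem. Your first candidate is not admissible: $x(1\ 2\ 3)=(1\ 3\ 2\ 4\ \ldots\ m)={}^{(2\ 3)}x$ lies in the \emph{other} $\Alt_m$-class. More fundamentally, perturbing a translation is the wrong direction, because translations composed with rotations stay translations and local perturbations inherit many $m$-cycles. For instance, $x(1\ 2\ 3)x^i$ is conjugate to $(1\ 2\ 3)x^{i+1}$. For a unit $i+1$, this is an $m$-cycle whenever $1,2,3$ occur in that cyclic order along $x^{i+1}$, which happens for half of the units.

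The missing idea, which the paper uses, is to perturb a \emph{reflection}. The paper's $\pi_m$ agrees with $i\mapsto m+1-i$ on an initial block and with $i\mapsto m+2-i$ on a final block, with five middle values rearranged to make it an $m$-cycle. A reflection times a rotation is again a reflection, so $\pi_m x^{\gamma}$ keeps a reflection-like block for every $\gamma\neq 0$, apart from a couple of exponents checked by hand. That block gives a fixed point or a $2$-cycle. Taking $y=\pi_m$ extended by the identity yields \eqref{eq:1EC_cent} uniformly for all $n\ge m$. One only needs a sign computation showing $y\in\calC$ when $n\le m+1$, and this is where $m\equiv 1\pmod 4$ enters.

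This construction needs $m\ge 9$, and the small cases cannot all be done your way. For $(m,n)=(5,5)$, a character count in $\Alt_5$ shows $|a_0C_G(x)\cap\calC|=2$ for every $a_0\in\calC$, so \eqref{eq:1EC_conj} is simply false there. Moreover, the two elements are never conjugate by a power of $x$, so your fallback fails as well. The paper instead uses \eqref{eq:2EC_cent} with the $3$-cycle $(1\ 4\ 5)$ as conjugator, and it treats $m=3$ and $m=7$ with separate explicit witnesses.
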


We begin by determining the values of $m$ and $n$ which have to be considered in \autoref{theorem:nondegeneracy_odd_cycles}.

\begin{proposition} \label{pro:determining_real_conjugacy_classes_an}
    Let $m,n$ be integers with $n \geq 5$, $m = 2k+1$ for some $k \geq 1$ and $n \geq m$. Let $\calC \subseteq \Alt_n$ be the conjugacy class of the permutation $x = (1\ 2\ \ldots \ m)$. Then $\calC$ is a real conjugacy class if and only if $n \geq m+2$ or $k$ is even.
\end{proposition}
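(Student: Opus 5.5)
We decide reality of $\calC$ by reducing to the question of whether $x$ and $x^{-1}$ are conjugate in $\Alt_n$. Conjugation preserves inversion, so $\calC$ is stable under inversion if and only if $x^{-1} \in \calC$.

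In $\Sym_n$, $x^{-1} = (m \ m-1 \ \ldots \ 1)$ has the same cycle type as $x$, so $x^{-1} = {}^wx$ for an explicit $w \in \Sym_n$. A convenient choice is the "reversal" permutation $w$ on $[m]$, given by $w(i) = m+1-i$ for $i \in [m]$ and fixing every point outside $[m]$. Indeed, ${}^wx = (w(1)\ w(2)\ \ldots\ w(m)) = (m \ m-1 \ \ldots \ 1) = x^{-1}$. The permutation $w$ is a product of the $k$ transpositions $(i \ m+1-i)$, $1 \le i \le k$, so its sign is $(-1)^k$.

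Now we split according to \autoref{lem:conjugacy_in_alternating_groups}.

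If $n \geq m+2$, the cycle decomposition of $x$ contains at least two $1$-cycles. So the cycles do not have pairwise different sizes, and the $\Sym_n$-class of $x$ equals its $\Alt_n$-class. Since $x^{-1}$ lies in the $\Sym_n$-class, $\calC$ is real.

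If $n \in \{m, m+1\}$, the cycle type of $x$ is either $(m)$ or $(m,1)$. In both cases it consists of odd cycles of pairwise different sizes, since $m \geq 3 > 1$. By the last statement of \autoref{lem:conjugacy_in_alternating_groups}, $x^{-1} = {}^wx$ is $\Alt_n$-conjugate to $x$ if and only if $w \in \Alt_n$, that is, if and only if $k$ is even. The lemma applies to every $g \in \Sym_n$ with ${}^gx = x^{-1}$, so the choice of $w$ does not matter.

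Combining the two cases, $\calC$ is real exactly when $n \geq m+2$ or $k$ is even, as claimed. The only step needing care is computing the sign of $w$, and this is immediate from the count of $k$ transpositions.
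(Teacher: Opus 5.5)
Your proposal is correct and follows essentially the same route as the paper: the case $n \geq m+2$ is handled via the repeated $1$-cycles and \autoref{lem:conjugacy_in_alternating_groups}, and the case $m \leq n \leq m+1$ via the reversal permutation $(1\ m)(2\ m-1)\cdots(k\ k+2)$ of sign $(-1)^k$. Your added remarks, that reality reduces to $x^{-1} \in \calC$ and that the choice of conjugating element is irrelevant, are correct and simply make explicit what the paper leaves implicit.
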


Note that that the condition on $k$ in the second case is equivalent to the congruence $m \equiv 1 \pmod 4$.

\begin{proof}
    We need to check when $x$ and $x^{-1}$ are conjugate in $\Alt_n$. If $n \geq m+2$, then the cycle decomposition of $x$ involves $2$ cycles of length $1$, so according to \autoref{lem:conjugacy_in_alternating_groups}, $x$ and $x^{-1}$ are conjugate in $\Alt_n$.

    Now suppose that $m \leq n \leq m+1$, then the cycle decomposition involves only a cycle of odd length $m$ and, if $n = m+1$, a cycle of length $1$. Letting $g = (1\ m)(2\ m-1)\ldots (k \ k+2)$ where $m = 2k+1$, we have
    \[
    {}^gx = {}^g (1\ 2\ \ldots \ m) = (m\ m-1\ \ldots \ 1) = x^{-1}.
    \]
    Again using \autoref{lem:conjugacy_in_alternating_groups}, we see that $x$ and $x^{-1}$ are conjugate in $\Alt_n$ if and only if $g \in \Alt_n$. As $\mathrm{sgn}(g) = (-1)^k$, this is the case if and only if $k$ is even.
\end{proof}

We check \autoref{theorem:nondegeneracy_odd_cycles} for the cases with $m \in \{3,5,7\}$ separately:

\begin{proposition} \label{pro:3cycles_satisfy_2EC}
    Let $n \geq 5$ and let $\calC$ be the conjugacy class of $x = (1 \ 2 \ 3) \in \Alt_n$. Then $\calC$ is a real conjugacy class satisfying \eqref{eq:2EC_conj}. In particular, $K_{\calC}$ is non-degenerate.
\end{proposition}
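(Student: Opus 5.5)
The plan is to verify \eqref{eq:2EC_conj} by exhibiting explicit elements, and then to apply \autoref{lm:reduction2el}. First, the class is real: since $m=3$ and $n\geq 5 = m+2$, \autoref{pro:determining_real_conjugacy_classes_an} applies. Equivalently, the cycle type of $x$ contains at least two fixed points, so by \autoref{lem:conjugacy_in_alternating_groups} the $\Alt_n$-class of $x$ is its full $\Sym_n$-class, namely all $3$-cycles.

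Next I take $x=(1\ 2\ 3)$ and $y=(1\ 4\ 2)$, and determine $\calC_{y,x}$ completely. A $3$-cycle $z$ satisfies ${}^zx=y$ if and only if $(z(1)\ z(2)\ z(3)) = (1\ 4\ 2)$. So the triple $(z(1),z(2),z(3))$ must be one of the three cyclic rotations $(1,4,2)$, $(4,2,1)$, $(2,1,4)$. In each case the values of $z$ on $\{1,2,3\}$ are fixed, and I then check whether they can be completed to a $3$-cycle.
\begin{itemize}
\item For $(1,4,2)$ we get $1\mapsto 1$, $2\mapsto 4$, $3\mapsto 2$. Since $z$ is a $3$-cycle moving $2$ and $3$, it must be $z=(2\ 4\ 3)$.
\item For $(4,2,1)$ we get $1\mapsto 4$, $2\mapsto 2$, $3\mapsto 1$, which forces $z=(1\ 4\ 3)$.
\item For $(2,1,4)$ we get $1\mapsto 2$ and $2\mapsto 1$. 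This transposes $1$ and $2$, which is impossible for a $3$-cycle.
\end{itemize}
Hence $\calC_{y,x}=\{a_0,a_1\}$ with $a_0=(2\ 4\ 3)$ and $a_1=(1\ 4\ 3)$, which is condition (1).

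For condition (2) I need $g\in\Alt_n$ of odd order with ${}^ga_0=a_1$. It suffices that $g$ sends $2\mapsto 1$ and fixes $3$ and $4$. Using a fifth point, which exists because $n\geq 5$, I take $g=(1\ 5\ 2)$. This is an even permutation of order $3$, and it satisfies ${}^g(2\ 4\ 3)=(g(2)\ g(4)\ g(3))=(1\ 4\ 3)$. Then \autoref{lm:reduction2el} gives non-degeneracy of $K_\calC$.

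The only step needing care is the exhaustive enumeration of $\calC_{y,x}$. The key point is that the choice of $y$ must prevent $z$ from having a free third point. With a naive choice such as $y=(1\ 2\ 4)$, the conjugator $(3\ 4\ c)$ works for every $c\geq 5$, so $\calC_{y,x}$ has $n-4$ elements rather than at most two. The choice $y=(1\ 4\ 2)$ avoids this: in each admissible rotation the $3$-cycle is already completely determined by the images of $1,2,3$.
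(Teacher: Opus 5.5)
Your proof is correct and is essentially the paper's argument. The paper uses the target $(4\ 3\ 2) = {}^{x}(1\ 4\ 2)$, so its set $\calC_{z,x} = \{(1\ 2\ 4),(1\ 3\ 4)\}$ is the ${}^{x}$-translate of yours; it finds this set by the same enumeration of the three rotations (one of which forces a transposition), and it also uses a $3$-cycle through the point $5$ as the odd-order conjugator. Your explicit check of realness via $n \geq m+2$ spells out a step the paper leaves implicit.
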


\begin{proof}
    Let $z = (4 \ 3 \ 2)$. We determine $\calC_{z,x}$: let $y \in \calC$ be such that ${}^yx = z$. Then $y$ is determined on the subset $[3] \subseteq [n]$ by $y(1) \in \{2,3,4\}$. If $y(1) = 4$, then $y$ is forced to contain the $2$-cycle $(2\ 3)$ and can therefore not lie in $\calC$.

    If $y(1) = 2$, then $y(2) = 4$ and $y(3) = 3$, which forces $y = (1 \ 2 \ 4)$. Similarly, $y(1) = 3$ forces $y = (1 \ 3 \ 4)$. Therefore, $\calC_{z,x} = \{(1 \ 2 \ 4), (1 \ 3 \ 4)\}$. As ${}^g (1 \ 2 \ 4) = (1 \ 3 \ 4)$ for the element $g = (2 \ 3 \ 5)\in \Alt_n$ which is of odd order $3$, we have confirmed \eqref{eq:2EC_conj} for $\calC$. 
\end{proof}

\begin{proposition} \label{pro:5_cycles_satisfy_2_element_condition}
    Let $n \geq 5$ and let $\calC$ be the conjugacy class of $x = (1 \ 2 \ 3 \ 4 \ 5) \in \Alt_n$. Then $\calC$ is a real conjugacy class satisfying \eqref{eq:2EC_cent}. In particular, $K_{\calC}$ is non-degenerate.
\end{proposition}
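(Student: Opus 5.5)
The statement asserts that the class of $x = (1\ 2\ 3\ 4\ 5)$ is real and satisfies \eqref{eq:2EC_cent}. Realness is immediate from \autoref{pro:determining_real_conjugacy_classes_an}: here $m = 5$, so $k = 2$ is even. For \eqref{eq:2EC_cent} I will keep $x$ itself as the element whose centralizer is used. I will look for $a_0 \in \calC$ supported on $[5]$, so that the whole verification reduces to a finite computation inside $\Alt_5$.

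\textbf{Step 1: the centralizer and reduction to $[5]$.} We have $C_{\Sym_n}(x) = \langle x\rangle \times \Sym_{\{6,\dots,n\}}$. Since $x$ is even, $C_{\Alt_n}(x) = \langle x\rangle \times \Alt_{\{6,\dots,n\}}$. Suppose $a_0$ is a $5$-cycle on $[5]$ that is not a power of $x$. Then every element of $a_0 C_G(x)$ has the form $(a_0x^i)\,h$, where $a_0x^i$ is a nontrivial permutation of $[5]$ and $h$ is an even permutation of $\{6,\dots,n\}$. The two factors have disjoint supports and $a_0x^i \neq 1$. Hence the product is a $5$-cycle only if $h = 1$ and $a_0x^i$ is itself a $5$-cycle. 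Consequently
\[
a_0 C_G(x) \cap \calC = \{\, a_0 x^i : 0 \le i \le 4,\ a_0x^i \in \calC \,\},
\]
and the problem becomes: find a $5$-cycle $a_0 \in \calC$ on $[5]$, not in $\langle x\rangle$, for which exactly two of the five products $a_0x^i$ lie in $\calC$.

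For $n \ge 7$, $\calC$ consists of all $5$-cycles in $\Alt_n$ by \autoref{lem:conjugacy_in_alternating_groups}. For $n \in \{5,6\}$, the $5$-cycles split into two $\Alt_n$-classes. Membership of $a_0x^i$ in $\calC$ is then decided by the last statement of that lemma: $a_0x^i \in \calC$ iff the permutation of $\Sym_5$ conjugating $x$ to $a_0x^i$ is even. This splitting only removes candidates, so it can only help the count.

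\textbf{Step 2: explicit choice of $a_0$.} I would tabulate $a_0x^i$ for a few natural candidates, for instance $a_0 = (1\ 2\ 3\ 5\ 4)$ or $a_0 = (1\ 2\ 4\ 3\ 5)$. For each $i$ I record whether the product is a $5$-cycle, and for $n\in\{5,6\}$ also its $\Alt_5$-class. The aim is one $a_0$ for which the count of products in $\calC$ is exactly two (the trivial product $a_0 = a_0x^0$ and one more, $a_1 = a_0x^j$) uniformly in $n$. If the counts for $n \ge 7$ and for $n \le 6$ differ, I would split into these two cases and pick a separate $a_0$ for each.

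\textbf{Step 3: an odd-order conjugator.} The elements of $G$ conjugating $a_0$ to $a_1$ form a coset $hC_G(a_0)$, where $C_G(a_0) \supseteq \langle a_0\rangle$. First I look for such an $h$ inside $\Alt_5$ (a $3$-cycle or a $5$-cycle), testing the five elements $ha_0^j$. If every element of that coset inside $\Alt_5$ has even order, then for $n \ge 7$ I multiply by a $3$-cycle on $\{6,7,8\}$ or use extra points. For $n\in\{5,6\}$ I would instead adjust the choice of $a_0$. Once this $g$ is found, \autoref{lm:reduction2el} finishes the proof.

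I expect the main obstacle to be Steps 2 and 3 together. One must find a single $a_0$ giving exactly two hits, and then for the small cases $n=5,6$, where the centralizers are tiny, one must also guarantee that an odd-order conjugator between $a_0$ and $a_1$ exists.
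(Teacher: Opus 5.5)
Your Step 1 is correct and is the same reduction the paper uses: an element of $a_0C_G(x)\cap\calC$ must act trivially on $[n]\setminus[5]$, so only the five products $a_0x^\gamma$ matter. The gap is that Steps 2 and 3, which are the whole content of the proposition, are never carried out. You do not commit to a witness $a_0,a_1,g$, and the candidates you name fail for some $n$.

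Both $(1\ 2\ 3\ 5\ 4)={}^{(4\ 5)}x$ and $(1\ 2\ 4\ 3\ 5)={}^{(3\ 4)}x$ are conjugates of $x$ by odd permutations. So for $n\in\{5,6\}$ they lie in the \emph{other} $\Alt_n$-class of $5$-cycles and cannot serve as $a_0$. The splitting does not ``only help'': it can remove $a_0$ itself. For $n\ge 7$ your candidates do work. For example, $a_0=(1\ 2\ 3\ 5\ 4)$ has $a_0x^2$ as its only other $5$-cycle, and a $3$-cycle conjugates $a_0$ to it. The cases $n=5,6$, however, remain open in your write-up.

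Your fallback in Step 3 also cannot work. Any $g$ with ${}^ga_0=a_1$ maps $[5]$ to itself, so $g=g_5s$ with $g_5\in\Sym_5$ and $s$ a permutation of $\{6,\dots,n\}$. Then $g$ has odd order iff both $g_5$ and $s$ do, which forces $g_5\in\Alt_5$ of odd order. Multiplying by a disjoint $3$-cycle, or using extra points, therefore never changes whether an odd-order conjugator exists; the question is decided inside $\Alt_5$, independently of $n$.

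What is missing is a choice of $a_0$ that lies in $\calC$ uniformly in $n$. This is automatic if $a_0$ is a conjugate of $x$ by an \emph{even} permutation. The paper takes $a_0=(1\ 3\ 4\ 2\ 5)={}^{(2\ 3\ 4)}x$. Then each of $a_0x^2$, $a_0x^3$, $a_0x^4$ has a fixed point in $[5]$. The remaining product is $a_0x=(1\ 4\ 3\ 5\ 2)$, with products read left to right as in the paper's computation. The element $h=(1\ 4\ 5)$ has order $3$ and satisfies ${}^{h}a_0=a_0x$. This settles all $n\ge 5$ at once, with no case split between $n\le 6$ and $n\ge 7$.
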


\begin{proof}
    Note that $C_G(x) = \left\langle x \right\rangle \times \Alt_{n-5}$, where we identify $\Alt_{n-5}$ with the alternating group on the set $[n] \setminus [5]$.

    Let $y = (1 \ 3 \ 4 \ 2 \ 5) = {}^{(2 \ 3 \ 4)}x$. We determine $yC_G(x) \cap \calC$. To this end, let $g \in C_G(x)$. If $g$ contains the cycle $(1 \ 2 \ 3 \ 4 \ 5)^{\gamma}$ for $\gamma \in \{2,3,4\}$, then it is readily checked that $yg$ contains a non-trivial cycle of length $< 5$ and therefore, $yg \not\in \calC$. Therefore, $g$ either acts trivially on $[5]$ or contains the $5$-cycle $(1 \ 2 \ 3 \ 4 \ 5)$. In this latter case, $yg$ contains the $5$-cycle $(1 \ 4 \ 3 \ 5 \ 2)$. In order for $yg$ to be in $\calC$, $g$ has to act trivially on $[n] \setminus [5]$. Therefore, $yC_G(x) \cap \calC = \{(1 \ 3 \ 4 \ 2 \ 5), (1 \ 4 \ 3 \ 5 \ 2) \}$. As ${}^h(1 \ 3 \ 4 \ 2 \ 5) = (1 \ 4 \ 3 \ 5 \ 2)$ with $h = (1 \ 4 \ 5) \in \Alt_n$ which is of order $3$, we have confirmed \eqref{eq:2EC_cent} for $\calC$.
\end{proof}

\begin{proposition}\label{pro:7_cycles_satisfy_1_element_condition}
    Let $n \geq 9$ and let $\calC$ be the conjugacy class of $x = (1 \ 2\ \ldots \ 7) \in \Alt_n$. Then $\calC$ is a real conjugacy class satisfying \eqref{eq:1EC_cent}. In particular, $K_{\calC}$ is non-degenerate.
\end{proposition}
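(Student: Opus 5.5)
The plan is to verify \eqref{eq:1EC_cent} directly. Reality of $\calC$ is immediate from \autoref{pro:determining_real_conjugacy_classes_an}: $n \geq 9 = m+2$. That is the only place the hypothesis $n \geq 9$ enters. I then want a $7$-cycle $y$ with $yC_G(x) \cap \calC = \{y\}$, where $G = \Alt_n$, and \autoref{lm:reduction1el} will give non-degeneracy.

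First I describe the centralizer. In $\Sym_n$ we have $C_{\Sym_n}(x) = \langle x \rangle \times \Sym_{n-7}$, with $\Sym_{n-7}$ acting on $[n]\setminus[7]$. Since $x$ is even, $C_G(x) = \langle x \rangle \times \Alt_{n-7}$. So every element of $yC_G(x)$ has the form $y x^\gamma h$ with $0 \leq \gamma \leq 6$ and $h \in \Alt_{n-7}$.

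I will choose $y$ to be a $7$-cycle supported on $[7]$ with $y \notin \langle x \rangle$. Then $y x^\gamma$ acts on $[7]$ and $h$ acts on the disjoint set $[n]\setminus[7]$, so $y x^\gamma h$ is the disjoint product of the two. For this product to be a single $7$-cycle (with fixed points), one factor must be trivial and the other a $7$-cycle.
\begin{enumerate}
\item If $y x^\gamma = 1$, then $y = x^{-\gamma} \in \langle x \rangle$, which is excluded by the choice of $y$. This disposes of the potential issue when $n \geq 14$ and $h$ itself is a $7$-cycle.
\item Hence $h = 1$, and we need $y x^\gamma \in \calC$ only for $\gamma = 0$.
\end{enumerate}
So the whole problem reduces to a finite question inside $\Sym_7$: find a $7$-cycle $y$ on $[7]$, not a power of $x$, such that $y x^\gamma$ is not a $7$-cycle for each $\gamma = 1, \dots, 6$.

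The main obstacle is exhibiting such a $y$ and checking the six products $y x^\gamma$. This is a finite computation. I would search among conjugates $y = {}^{\sigma}x$ by small permutations $\sigma$, in the spirit of the choice $y = {}^{(2\,3\,4)}x$ in \autoref{pro:5_cycles_satisfy_2_element_condition}. Candidates would be $\sigma = (2\,3\,4)$, $(2\,4\,6)$ or $(2\,3)(4\,5)$.

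A counting heuristic makes success very plausible. Only $720$ of the $5040$ elements of $\Sym_7$ are $7$-cycles. Also, each product $y x^\gamma$ is an even permutation, so it is either a $7$-cycle or of another even cycle type.

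I would write out the six products explicitly for the chosen $y$. Each should exhibit a shorter nontrivial cycle or a fixed point in $[7]$. This gives $yC_G(x) \cap \calC = \{y\}$, completing the verification of \eqref{eq:1EC_cent}.
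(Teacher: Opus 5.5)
Your reduction is correct and matches the paper's. You have $C_G(x)=\langle x\rangle\times\Alt_{n-7}$, and a product $yx^\gamma h$, with $y\notin\langle x\rangle$ supported on $[7]$, can lie in $\calC$ only if $h=1$. Your remark that $yx^\gamma=1$ is ruled out by $y\notin\langle x\rangle$ covers a point the paper passes over silently. So everything reduces to finding a $7$-cycle $y$ on $[7]$ such that $yx^\gamma$ is not a $7$-cycle for $\gamma=1,\dots,6$.

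\textbf{The gap.} That existence claim is the whole content of the proposition, and you never establish it. The counting heuristic is not a proof, and none of the three candidates you name works. Write $(yx^\gamma)(i)=y(x^\gamma(i))$.
\begin{itemize}
\item $\sigma=(2\,3\,4)$ gives $y=(1\,3\,4\,2\,5\,6\,7)$, and $yx=(1\,5\,7\,3\,2\,4\,6)$.
\item $\sigma=(2\,4\,6)$ gives $y=(1\,4\,3\,6\,5\,2\,7)$, and $yx^5=(1\,5\,6\,3\,4\,7\,2)$.
\item $\sigma=(2\,3)(4\,5)$ gives $y=(1\,3\,2\,5\,4\,6\,7)$, and $yx^2=(1\,2\,6\,3\,4\,7\,5)$.
\end{itemize}
So in each case $yC_G(x)\cap\calC$ contains a second element, and \eqref{eq:1EC_cent} is not witnessed. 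Conjugating by $\sigma^{-1}$ instead fails in the same way.

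\textbf{What is missing} is a concrete witness with the six checks carried out. The paper takes $y=(1\,7\,2\,6\,3\,4\,5)$. Then $yx$ fixes $7$, and $yx^2$ contains the transposition $(6\,7)$. Also $yx^3$, $yx^4$, $yx^5$, $yx^6$ fix $3$, $1$, $2$, $4$ respectively, so none of these products is a $7$-cycle.

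With this $y$ inserted, your argument becomes a complete proof. Note also that $n\geq 9$ is not used only for reality: it is also what guarantees $y\in\calC$, since all $7$-cycles are then conjugate in $\Alt_n$.
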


\begin{proof}
    Note first that $C_G(x) = \left\langle x \right\rangle \times \Alt_{n-7}$.

    Let $y = (1 \ 7 \ 2 \ 6 \ 3 \ 4 \ 5)$. We determine $yC_G(x) \cap \calC$. To this end, let $g \in C_G(x)$. The same argument as in the proof of \autoref{pro:5_cycles_satisfy_2_element_condition} shows that $yg \in \calC$ for $g \in C_G(x)$ implies that $g$ acts trivially on $[n] \setminus [7]$. Now if $g$ contains the cycle $x^{\gamma}$ for $1 \leq \gamma \leq 6$, one can check that $yg$ has a fixed point or a $2$-cycle on $[7]$ and therefore, $yg \not\in C_G(x)$. It follows that $g$ acts trivially on $[7]$ and thus, that $yC_G(x) \cap \calC = \{y\}$. We conclude that $\calC$ fulfills \eqref{eq:1EC_cent}.
\end{proof}

Before continuing with the proof of \autoref{theorem:nondegeneracy_odd_cycles} in the remaining cases, we need the following lemma.

\begin{lemma} \label{lem:conjugating_n_cycles}
    Let $n \geq 1$, $x = (1 \ 2 \ \ldots \ n) \in \Sym_n$ and let $y \in \Sym_n$ be an $n$-cycle. Define $g: [n] \to [n]$ by $g(i) = y^{i-1}(1)$ ($1 \leq i \leq n$). Then $g \in \Sym_n$ and $y = {}^gx$.
\end{lemma}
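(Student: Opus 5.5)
The statement to prove is \autoref{lem:conjugating_n_cycles}: for $x = (1 \ 2 \ \ldots \ n)$ and an $n$-cycle $y$, the map $g(i) = y^{i-1}(1)$ is a permutation and $y = {}^gx$. The plan is to argue directly from the cycle structure of $y$ in two steps.

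First, I show that $g$ is a bijection. Since $y$ is an $n$-cycle in $\Sym_n$, the subgroup $\langle y \rangle$ acts transitively on $[n]$. Hence the orbit $\{y^j(1) : 0 \leq j \leq n-1\}$ is all of $[n]$. These $n$ values are pairwise distinct, because $y^a(1) = y^b(1)$ with $0 \leq a < b \leq n-1$ would give $y^{b-a}(1) = 1$. That would make the orbit of $1$ have size $b-a < n$, a contradiction. So $g$ is injective on the finite set $[n]$, hence $g \in \Sym_n$.

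Second, I verify $y = gxg^{-1}$ pointwise, which is equivalent to $yg = gx$ as maps on $[n]$. I use the convention ${}^gx = gxg^{-1}$ from the paper. For $1 \leq i \leq n-1$ we have $x(i) = i+1$, so
\[
g(x(i)) = y^{i}(1) = y\bigl(y^{i-1}(1)\bigr) = y(g(i)).
\]
For $i = n$ we have $x(n) = 1$, so $g(x(n)) = g(1) = 1$. On the other side, $y(g(n)) = y^n(1) = 1$, since $y$ has order $n$. Thus $yg = gx$, i.e. $y = {}^gx$.

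There is no real obstacle here. The only point needing care is the wrap-around case $i = n$, which uses $y^n = \mathrm{id}$. One should also check that the composition convention matches the one used for ${}^gx$ elsewhere in the paper: with permutations acting on the left, ${}^g(a_1 \ \ldots \ a_n) = (g(a_1) \ \ldots \ g(a_n))$. This is consistent with the computation above, since ${}^gx = (g(1) \ g(2) \ \ldots \ g(n)) = (1 \ y(1) \ \ldots \ y^{n-1}(1)) = y$.
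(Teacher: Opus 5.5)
Your proof is correct and follows essentially the same route as the paper. You first get bijectivity of $g$ from the pairwise distinctness of the values $y^{i-1}(1)$, where the paper phrases it as surjectivity, and you then check $gx = yg$ pointwise, handling the wrap-around case $i = n$ via $y^n = \mathrm{id}$.
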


\begin{proof}
    As $y$ is an $n$-cycle, we have $y^i(1) = y^j(1)$ if and only if $j-i$ is divisible by $n$. Therefore $g$ is surjective and, as a consequence, a bijection.

    We now prove that $y = {}^gx$ which is equivalent to $gx = yg$. By definition of $g$, we have
    \[
    (gx)(i) = g(i+1) = y^i(1) = y(y^{i-1}(1)) = (yg)(i)
    \]
    for $1 \leq i < n$, while
    \[
    (gx)(n) = g(1) = 1 = y^n(1) = y(y^{n-1}(1)) = (yg)(n). \qedhere 
    \]
\end{proof}

We will now define a family of permutations $\pi_m$ that will be used to prove \autoref{theorem:nondegeneracy_odd_cycles} in the remaining cases. Let $m = 2k+1$ for some integer $k \geq 3$. We then define the permutation
\[
\pi_m: [m] \to [m] \quad ; \quad i \mapsto \begin{cases}
    m+1-i & 1 \leq i \leq \frac{m-5}{2}, \\
    \frac{m-1}{2} & i = \frac{m-3}{2}, \\
    \frac{m+3}{2} & i = \frac{m-1}{2}, \\
    1 & i = \frac{m+1}{2}, \\
    \frac{m+5}{2} & i = \frac{m+3}{2}, \\
    \frac{m+1}{2} & i = \frac{m+5}{2}, \\
    m+2-i & \frac{m+7}{2} \leq i \leq m.
\end{cases} 
\]

A standard induction shows that for $0 \leq i \leq m-1$ we have
\begin{equation} \label{eq:cycle_pi_m}
\pi_m^i(1) = \begin{cases}
    k+1 & i = 2k; \, k \leq \frac{m-5}{2}, \\
    m-k & i = 2k+1; \, k \leq \frac{m-7}{2}, \\
    \frac{m-1}{2} & i = m-4, \\
    \frac{m+3}{2} & i = m-3, \\
    \frac{m+5}{2} & i = m-2, \\
    \frac{m+1}{2} & i = m-1. \\
\end{cases}
\end{equation}

So $\pi_m$ is indeed an $m$-cycle.

\begin{lemma}
    Suppose that $n \geq m = 2k+1$ for some integer $k \geq 3$. Let $x = (1 \ 2 \ldots \ m)$ and
    \begin{equation} \label{eq:cycle_y}
    y: [n] \to [n] ; \quad i \mapsto \begin{cases}
        \pi_m(i) & 1 \leq i \leq m \\
        i & m+1 \leq i \leq n.
    \end{cases}
    \end{equation}
    Suppose the conjugacy class of $x$ in $\Alt_n$ is real. Then $x$ and $y$ are conjugate in $\Alt_n$.
\end{lemma}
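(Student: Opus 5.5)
The plan is to show that $y$ is an $m$-cycle, that it is conjugate to $x$ in $\Sym_n$, and then to settle the refinement to $\Alt_n$ with \autoref{lem:conjugacy_in_alternating_groups}. By \eqref{eq:cycle_pi_m}, $\pi_m$ is an $m$-cycle on $[m]$. Hence $y$ as in \eqref{eq:cycle_y} has the same cycle type as $x$, namely one $m$-cycle and $n-m$ fixed points, so $x$ and $y$ are conjugate in $\Sym_n$. I split into cases according to \autoref{pro:determining_real_conjugacy_classes_an}.

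If $n \geq m+2$, the cycle decomposition of $x$ contains two $1$-cycles. By \autoref{lem:conjugacy_in_alternating_groups}, the $\Sym_n$-class of $x$ then equals its $\Alt_n$-class, and we are done.

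In the remaining case $n \in \{m, m+1\}$, the class of $x$ in $\Sym_n$ splits into two $\Alt_n$-classes. Reality of the $\Alt_n$-class of $x$ forces $m \equiv 1 \pmod 4$, that is, $(m-1)/2$ is even. By the last part of \autoref{lem:conjugacy_in_alternating_groups}, it suffices to exhibit one even $g \in \Sym_n$ with ${}^g x = y$. I take the element given by \autoref{lem:conjugating_n_cycles}, applied to the $m$-cycles on $[m]$: define $g(i) = \pi_m^{i-1}(1)$ for $1 \leq i \leq m$ and $g(i) = i$ for $i > m$. Then ${}^g x = y$, and everything reduces to computing $\mathrm{sgn}(g)$. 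Any other conjugating element differs from $g$ by an element of $C_{\Sym_n}(x)$. For $n \le m+1$ this centralizer is $\langle x\rangle$, which consists of even permutations, so the sign is independent of the choice and the computation is forced.

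To compute $\mathrm{sgn}(g)$, I read off from \eqref{eq:cycle_pi_m} that $g$ is the "zigzag" map $2j+1 \mapsto j+1$, $2j+2 \mapsto m-j$ on an initial segment of positions. On the last four positions $m-3, m-2, m-1, m$, $g$ takes the values $\frac{m-1}{2}, \frac{m+3}{2}, \frac{m+5}{2}, \frac{m+1}{2}$. Let $\sigma$ be the full zigzag permutation, $\sigma(2j+1) = j+1$ and $\sigma(2j+2) = m-j$ for all admissible $j$. Then $g = \tau \sigma$, where $\tau$ permutes only the four central values $\frac{m-1}{2}, \dots, \frac{m+5}{2}$; I will write $\tau$ explicitly and get its sign directly. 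The sign of $\sigma$ I obtain by counting inversions. The odd positions carry the increasing values $1, 2, \dots, \frac{m+1}{2}$ and the even positions carry the decreasing values $m, m-1, \dots, \frac{m+3}{2}$, so the inversion count is an explicit quadratic expression in $m$. Its parity depends only on $m \bmod 4$ (equivalently, it can be matched against the sign $(-1)^{(m-1)/2}$ of the reversal $\mathrm{sgn}(g)$ appearing in \autoref{pro:determining_real_conjugacy_classes_an}).

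The main obstacle is this bookkeeping: the index conventions in \eqref{eq:cycle_pi_m} must be aligned correctly with the zigzag so that the four-element correction $\tau$ is identified exactly. I expect the outcome to be $\mathrm{sgn}(g) = 1$ precisely when $m \equiv 1 \pmod 4$, which is exactly the reality hypothesis, and then the lemma follows. As a safeguard I would check the smallest case $m = 9$ by hand, listing $g$ explicitly and counting its cycles.
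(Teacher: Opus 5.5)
Your proposal is correct and is essentially the paper's proof: the same case split from the classification of real classes of $m$-cycles, the same conjugating element $g(i)=\pi_m^{i-1}(1)$ extended by the identity, and a sign computation by counting inversions. Your factorization $g=\tau\sigma$ works out: $\tau=\bigl(\frac{m-1}{2}\ \frac{m+3}{2}\ \frac{m+5}{2}\bigr)$ is a $3$-cycle and the full zigzag $\sigma$ has exactly $k^2$ inversions, so $\mathrm{sgn}(g)=(-1)^k$, which matches the paper's direct count $(-1)^{(m-5)/2}$.
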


\begin{proof}
    Due to \autoref{pro:determining_real_conjugacy_classes_an}, we distinguish between the cases $n \geq m+2$ and $m \equiv 1 \pmod 4$.

    For $n \geq m+2$, there is nothing to show as in that case, all $m$-cycles are conjugate in $\Alt_n$ by \autoref{lem:conjugacy_in_alternating_groups}.

    Now suppose that $m \equiv 1 \pmod 4$. Due to the obvious inclusion $\Alt_m \leq \Alt_n$, we may assume that $m = n$. Defining $g: [m] \to [m] $ by
    \[
    g(i) = \pi_m^{i-1}(1),
    \]
    it follows from \autoref{lem:conjugating_n_cycles} that $g \in \Sym_m$ and ${}^gx = y$. For the argument that is about to follow, it is convenient to represent $g$ as
    \[
    g = \begin{pmatrix}
        1 & 2 & 3 & 4 & \ldots & m-4 & m-3 & m-2 & m-1 & m \\
        1 & m & 2 & m-1 & \ldots & \frac{m-3}{2} & \frac{m-1}{2} & \frac{m+3}{2} & \frac{m+5}{2} & \frac{m+1}{2}
    \end{pmatrix}
    \]
    
    We now prove that $g \in \Alt_m$. Recall that $\mathrm{sgn}(g) = (-1)^{\ell (g)}$ where $\ell (g) = |\{ 1 \leq i < j \leq m : g(i) > g(j) \}|$. 

    Letting $D(j) = |\{ i \in [j-1] : g(i) > g(j) \}|$ ($1 \leq j \leq m$), we calculate
    \begin{align*}
        \ell(g) & = \sum_{j=1}^m D(j) \\
        & =  \sum_{k = 0}^{\frac{m-5}{2}} D(2k+1) 
        + \sum_{k = 0}^{\frac{m-7}{2}} D(2k+2)  
        +  \sum_{j = m-3}^{m-1} D(j) + D(m)  \\ 
        & =\sum_{k = 0}^{\frac{m-5}{2}} k+ \sum_{k = 0}^{\frac{m-7}{2}} k  + 3 \cdot \frac{m-5}{2} +  \frac{m-5}{2}+2  \\
        & = 2 \cdot  \sum_{k = 0}^{\frac{m-7}{2}} k  + \frac{m-5}{2} + 4 \cdot \frac{m-5}{2} + 2 \\
        & \equiv \frac{m-5}{2} \pmod 2.
    \end{align*}
    We now see that $\mathrm{sgn}(g) = (-1)^{\frac{m-5}{2}}$ which is equal to $1$ as $m \equiv 1 \pmod 4$.
\end{proof}

\begin{proposition} \label{pro:odd_cycles_satisfy_1EC}
    Let $n \geq m = 2k+1$ for some integer $k \geq 4$. Let $\calC$ be the conjugacy class of $x = (1 \ 2 \ldots \ m)$ in $G = \Alt_n$. If $\calC$ is a real conjugacy class, then $\calC$ satisfies \eqref{eq:1EC_cent}. In particular, $K_\calC$ is non-degenerate.
\end{proposition}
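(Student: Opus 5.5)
Following the pattern of \autoref{pro:5_cycles_satisfy_2_element_condition} and \autoref{pro:7_cycles_satisfy_1_element_condition}, I will verify \eqref{eq:1EC_cent} with $x = (1 \ 2 \ \ldots \ m)$ and $y$ the permutation from \eqref{eq:cycle_y}, i.e.\ $\pi_m$ on $[m]$ and the identity on $[n]\setminus[m]$. By the preceding lemma, $y \in \calC$ whenever $\calC$ is real, so the task reduces to showing $yC_G(x) \cap \calC = \{y\}$. The plan is to show that every $g \in C_G(x)$ with $yg \in \calC$ must be trivial; non-degeneracy then follows from \autoref{lm:reduction1el}.

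First I describe the centralizer: $C_G(x) = (\langle x \rangle \times \Sym_{n-m}) \cap \Alt_n = \langle x\rangle \times \Alt_{n-m}$, since $x$ is even. Thus $g = x^\gamma h$ with $0 \leq \gamma \leq m-1$ and $h \in \Alt_{n-m}$ acting on $[n]\setminus[m]$. Because $y$ fixes $[n]\setminus[m]$ pointwise and $x^\gamma$ preserves $[m]$, the product $yg$ is $\pi_m x^\gamma$ on $[m]$ times $h$ on the complement. An element of $\calC$ is a single $m$-cycle with $n-m$ fixed points. If $h \neq 1$, the complement carries a nontrivial cycle, so the $[m]$-part must contain fewer than $m$ moved points while still being a single $m$-cycle overall. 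This is impossible, since the total support would exceed $m$ or the cycle type would be wrong. Hence $h = 1$, and $\pi_m x^\gamma$ must itself be an $m$-cycle on $[m]$.

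The core step, which I expect to be the main obstacle, is showing that $\pi_m x^\gamma$ is not an $m$-cycle for each $1 \leq \gamma \leq m-1$. Here $x^\gamma(i) = i+\gamma \bmod m$, so $(\pi_m x^\gamma)(i) = \pi_m(i+\gamma)$. On the large ``reflection'' ranges, $\pi_m$ acts as $i \mapsto m+1-i$ or $i \mapsto m+2-i$, so $\pi_m x^\gamma$ acts there as $i \mapsto c - i$ for a constant $c$ depending on $\gamma$. Such a map is an involution on its domain: whenever both $i$ and $c-i$ (after reduction) lie in the generic range, we get a $2$-cycle or a fixed point. My strategy is to exhibit, for each $\gamma$, an explicit $i$ with $(\pi_m x^\gamma)^2(i) = i$, so that $\pi_m x^\gamma$ has a cycle of length at most $2$ and cannot be an $m$-cycle.

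The reflection pieces have length about $(m-5)/2$ each, and only five ``exceptional'' points near $(m\pm1)/2$ break the symmetry. Since $k \geq 4$ gives $m \geq 9$, one can choose $i$ so that both $i+\gamma$ and $\pi_m(i+\gamma)+\gamma$ avoid the exceptional window and lie in reflection pieces of the same type; this yields $(\pi_m x^\gamma)^2(i)=i$. I will do this by splitting $\gamma$ into a few ranges (small $\gamma$, $\gamma$ near $m/2$, large $\gamma$), choosing $i$ in each range, and treating the handful of boundary values of $\gamma$ separately by direct computation, using \eqref{eq:cycle_pi_m} where convenient. This bookkeeping is where the work lies, and the requirement $k\ge4$ should guarantee enough room. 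Once it is done, $g=1$, so $yC_G(x)\cap\calC=\{y\}$ and \autoref{lm:reduction1el} completes the proof.
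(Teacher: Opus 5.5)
Your plan is essentially the paper's proof: same $y$, same reduction to $g=x^\gamma$ on $[m]$, and the same reflection trick. The paper carries out the bookkeeping with the piece $i\mapsto m+1-i$ of $\pi_m$ for $1\le\gamma\le m-6$ and the piece $i\mapsto m+2-i$ for $5\le\gamma\le m-2$. So your ``boundary values'' are only $\gamma=m-1$ and, for $m=9$, $\gamma=4$; these are settled by the fixed points $(\pi_m x^{-1})(\tfrac{m-1}{2})=\tfrac{m-1}{2}$ and $(\pi_9x^4)(1)=1$, which do come from the exceptional window.

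One small fix: when $n\ge 2m$, your argument that $h=1$ must also exclude $\pi_m x^\gamma=\mathrm{id}$ with $h$ an $m$-cycle on the complement. This holds because $\pi_m\notin\langle x\rangle$.
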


\begin{proof}
    Note first that $C_G(x) = \langle x \rangle \times \Alt_{n-m}$ where we identify $\Alt_{n-m}$ with the alternating group on the set $[n] \setminus [m]$.
    
    Let $y$ be as in Equation \eqref{eq:cycle_y}. We want to show that $yC_G(x) \cap \calC = \{ y \}$. To this end, let $g \in C_G(x)$ be such that $yg \in \calC$. By the same argument as in the proof of \autoref{pro:5_cycles_satisfy_2_element_condition}, $g$ has to act trivially on $[n] \setminus [m]$.

    First, suppose that $g$ restricts on $[m]$ to $x^{-\gamma} = x^{m-\gamma}$ for some $6 \leq \gamma \leq m-1$. Pick $a,b \in [m]$ with $1 + \gamma \leq a,b \leq \frac{m-5}{2} + \gamma$ and $a+b = m+1+\gamma$. This is possible since $2(1+\gamma) \leq m+1+\gamma$ holds for $\gamma \leq m-1$ and $2(\frac{m-5}{2} + \gamma) \geq m+1+\gamma$ is valid for $\gamma \geq 6$. With these $a,b$, we observe that
    \[
    (yg)(a) = (\pi_m x^{-\gamma})(a) = \pi_m(a - \gamma) = m+1-(a-\gamma) = m+1+\gamma - a = b,
    \]
    and by a similar calculation that $(yg)(b) = a$. Therefore, $yg$ either has a fixed point (if $a=b)$ or a $2$-cycle $(a \ b)$ (if $a \neq b)$ on $[m]$.

    Now suppose that $g$ restricts on $[m]$ to $x^{\gamma}$ for some $5 \leq \gamma \leq m-2$. Pick $a,b \in [m]$ with $\frac{m+7}{2}- \gamma \leq a,b \leq m-\gamma$ and $a+b = m+2-\gamma$. This is possible since $2(m -\gamma) \geq m+2-\gamma$ holds for $\gamma \leq m-2$ and $2(\frac{m+7}{2}- \gamma) \leq m+2-\gamma$ is valid for $\gamma \geq 5$. With this choice of $a,b$, we obtain
    \[
    (yg)(a) = (\pi_m x^{\gamma})(a) = \pi_m(a + \gamma) = m+2-(a+\gamma) = m+2-\gamma - a = b,
    \]
    and by a similar calculation that $(yg)(b) = a$. It follows again that $yg$ has a fixed point or a $2$-cycle on $[m]$.

    If $g$ restricts on $[m]$ to $x^{m-1} = x^{-1}$, we calculate
    \[
    (yg) \Big(\frac{m-1}{2} \Big) = (\pi_m x^{m-1}) \Big(\frac{m-1}{2}\Big) = \pi_m\Big(\frac{m-3}{2}\Big) = \frac{m-1}{2},
    \]
    so again, $yg$ has a fixed point.

    By what we have shown, $yg \not\in \calC$ whenever $g$ restricts on $[m]$ to $x^{\gamma}$ with $1 \leq \gamma \leq m-6$ or $5 \leq \gamma \leq m-1$.

    As $m \geq 9$, this leaves us with the case $m = 9$, $\gamma = 4$ which is tackled by observing that $(\pi_9 x^4)(1) = 1$.

    We conclude that $yg \in \calC$ if and only if $g$ acts trivially on $[m]$. Therefore, $\calC$ satisfies \eqref{eq:1EC_cent}.
\end{proof}

It is easily verified that Propositions \ref{pro:3cycles_satisfy_2EC} and \ref{pro:5_cycles_satisfy_2_element_condition} also hold true in $\Sym_n$ for $n \geq 5$ and Propositions \ref{pro:7_cycles_satisfy_1_element_condition} and \ref{pro:odd_cycles_satisfy_1EC} in $\Sym_n$ for $n \geq 9$.
Thus in order to prove Theorem \ref{thm:alt_cycles_summary} for $\Sym_n$, one only needs to verify that $K_\calC$ is non-degenerate for $\calC$ a conjugacy class of odd cycles in $\Sym_n$ that splits into two non-real classes in $\Alt_n$.

\begin{proposition}
    Let $m,n$ be integers with $n \geq 5$, $m = 2k+1$ for some $k \geq 1$ odd and $m \leq n \leq m+1$. 
    Let $\calC \subset \Sym_n$ be the conjugacy class of the permutation $x = (1 \ 2 \ \dots \ m)$. 
    Then $\calC$ satisfies \eqref{eq:1EC_cent}. In particular, $K_\calC$ is non-degenerate.
\end{proposition}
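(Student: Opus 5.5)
Because the centralizer of $x$ in $\Sym_n$ is very small here, the plan is to verify \eqref{eq:1EC_cent} directly, reusing the witnesses $y$ already constructed for the alternating groups.

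\emph{Reduction.} We have $n\geq 5$, $m=2k+1$ with $k$ odd, and $n\leq m+1$. The case $k=1$ is impossible, since it gives $n\leq 4$. So $k\geq 3$, i.e.\ $m\in\{7,11,15,\dots\}$. Since $n-m\leq 1$, we get $C_{\Sym_n}(x)=\langle x\rangle\times\Sym_{n-m}=\langle x\rangle$. Every element of $C_{\Sym_n}(x)$ fixes $[n]\setminus[m]$ pointwise and acts on $[m]$ as some power $x^\gamma$ with $0\leq\gamma\leq m-1$. Moreover, any $m$-cycle on $[m]$ lies in $\calC$, because all $m$-cycles are conjugate in $\Sym_n$. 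Consequently, to establish \eqref{eq:1EC_cent} it suffices to exhibit an $m$-cycle $y$ on $[m]$ such that, for every $1\leq\gamma\leq m-1$, the product $yx^\gamma$ has a fixed point or a $2$-cycle on $[m]$. Such a product is then not an $m$-cycle, hence not in $\calC$, which gives $y\,C_{\Sym_n}(x)\cap\calC=\{y\}$. \autoref{lm:reduction1el} then yields the non-degeneracy of $K_\calC$.

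\emph{Case $m\geq 11$ (so $k\geq 5$).} Take $y$ as in Equation \eqref{eq:cycle_y}, i.e.\ $\pi_m$ on $[m]$ and the identity elsewhere. The case analysis in the proof of \autoref{pro:odd_cycles_satisfy_1EC} only computes $\pi_m x^{\gamma}$ on $[m]$. It shows that this product has a fixed point or a $2$-cycle for:
\begin{itemize}
\item $1\leq\gamma\leq m-6$, by the $x^{-\gamma'}$ argument;
\item $5\leq\gamma\leq m-2$, by the $x^{\gamma}$ argument;
\item $\gamma=m-1$, via the fixed point $\frac{m-1}{2}$.
\end{itemize}
For $m\geq 11$ these ranges cover all $1\leq\gamma\leq m-1$, so the exceptional $m=9$ computation is not needed. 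The hypotheses of that proposition were used only to guarantee $y\in\calC$ inside $\Alt_n$ and to describe the centralizer there. In $\Sym_n$ both facts are automatic: $y\in\calC$ because it is an $m$-cycle, and the centralizer was identified above.

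\emph{Case $m=7$.} Here $n\in\{7,8\}$. Take $y=(1\ 7\ 2\ 6\ 3\ 4\ 5)$ from \autoref{pro:7_cycles_satisfy_1_element_condition} and check directly that $yx^\gamma$ has a fixed point or a $2$-cycle on $[7]$ for each $1\leq\gamma\leq 6$. The hypothesis $n\geq 9$ in that proposition was needed only for reality and conjugacy in $\Alt_n$, not for this finite check.

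\emph{Main obstacle.} The main point needing care is confirming that the earlier arguments depended on $n$ only through the centralizer and through membership in $\calC$. The other thing to verify is that the $\gamma$-ranges really cover everything once $m\geq 11$; this holds because $m-6\geq 5$ makes the first two ranges overlap. Beyond that, only the explicit six-case check for $m=7$ remains, and it is routine.
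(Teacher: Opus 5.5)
Your proposal is correct and follows the paper's route. For $m\geq 11$ you reuse $y=\pi_m$ and the $\gamma$-range analysis from the proof of \autoref{pro:odd_cycles_satisfy_1EC}; the exceptional case $m=9$ cannot occur, since $m\equiv 3 \pmod 4$. For $m=7$ you reuse the witness $y=(1\ 7\ 2\ 6\ 3\ 4\ 5)$ from \autoref{pro:7_cycles_satisfy_1_element_condition}.

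You are in fact slightly more careful than the paper. The paper cites the $k=1$ and $k=3$ propositions, but $k=1$ is vacuous here, and the $7$-cycle proposition formally assumes $n\geq 9$. You are right that its check on $[7]$ does not depend on $n$, and each of the six products $yx^\gamma$ ($1\leq\gamma\leq 6$) does have a fixed point or a $2$-cycle.
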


\begin{proof}
    The cases $k=1$ and $k=3$ have already been covered in Propositions \ref{pro:3cycles_satisfy_2EC} and \ref{pro:7_cycles_satisfy_1_element_condition} respectively. Hence we can assume $k \geq 5$ and thus $m \geq 11$.
    
    We can again take $y$ as in Equation \eqref{eq:cycle_y}. Then $x$ and $y$ are trivially conjugate as there is only one conjugacy class of $m$-cycles in $\Sym_n$. It now follows that $x$ and $y$ satisfy \eqref{eq:1EC_cent} in the exact same way as in the proof of Proposition \ref{pro:odd_cycles_satisfy_1EC}.
\end{proof}

\section{Computational evidence for \autoref{conj:non-degeneracy}} 
\label{sec:computations}

In this final section, we briefly explain how to check computationally that \autoref{conj:non-degeneracy} holds for all finite simple groups up to order $10^9$. This substantially extends the computational results obtained in \cite{LopezPenaMajidRietsch}.

Recall that conditions \eqref{eq:1EC_conj} and \eqref{eq:2EC_conj} ensure that \autoref{conj:non-degeneracy} holds true for a real conjugacy class $\calC \subseteq G$ for a group $G$.

As $\calC_{{}^gy,{}^gx} = {}^g\calC_{y,x}$ holds for all $g \in G$ and $x,y \in \calC$ (\autoref{lem:shifting_Cyx}), one can fix an element $x = x_0 \in \calC$ in both conditions and ask for the mere existence of an element $y$ such that the conditions are satisfied. Therefore, checking \eqref{eq:1EC_conj} and \eqref{eq:2EC_conj} mostly reduces to the computation of ${}^yx_0$ for all $y \in \calC$ and a fixed $x_0 \in \calC$.

By \autoref{thm:PSL2_all_nondeg}, one can skip the computations for the groups $\PSL_2(q)$ which make up most of the small non-abelian simple groups. Using a \texttt{GAP} script \cite{GAP4}, we have checked \eqref{eq:1EC_conj} and, if necessary, \eqref{eq:2EC_conj} for all real conjugacy classes of finite simple groups $G$ of order $|G| \leq 10^9$. In order to confirm \eqref{eq:1EC_conj} for a real conjugacy class $\calC \subseteq G$, one fixes an element $x_0 \in \calC$, calculates the elements ${}^yx_0$, where $y$ ranges over $\calC$, and checks for elements that have been calculated exactly once. This can be done in linear time using a dictionary. Note that the \texttt{atlasrep}-package (\cite{atlasrep}) provides permutation representations of finite simple groups, therefore sorting lists of their elements is computationally easy.

If \eqref{eq:1EC_conj} does not hold, one checks in the same way for elements that have been calculated exactly twice, say ${}^yx_0 = {}^{y'}x_0$ ($y \neq y'$), and searches for an element $h \in G$ of odd order such that ${}^hy = y'$.

Note that it is easily seen that applying an automorphism of $G$ to a conjugacy class results in a conjugacy class with an equivalent Killing form, therefore it is sufficient to compute the normalizer $N$ of the permutation representation of $G \leq \Sym_n$ and check only one conjugacy class in each $N$-orbit.

The supplementary \texttt{GAP}-script is available upon request from the authors of this article.

We summarize the findings of the computations. Recall that \eqref{eq:1EC_conj} is a special case of \eqref{eq:2EC_conj}, so we will not mention it separately in the following.

\begin{theorem}
    Each real conjugacy class in a non-abelian simple group $G$ with $|G| \leq 10^9$ satisfies \eqref{eq:2EC_conj}. In particular, each such conjugacy class has a non-degenerate Killing form.
\end{theorem}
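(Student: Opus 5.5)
The proof is a computer verification organised around the reductions already established. The second sentence of the statement follows from the first by \autoref{lm:reduction2el}, since \eqref{eq:1EC_conj} is the special case $a_0=a_1$ of \eqref{eq:2EC_conj}. So everything rests on checking \eqref{eq:2EC_conj} for every real conjugacy class of every non-abelian simple group of order at most $10^9$.

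First, I would enumerate these groups using the classification of finite simple groups. The list consists of the alternating groups $\Alt_n$ with $n\le 12$, the sporadic groups up to that order (such as the Mathieu groups, $J_1,J_2,J_3$ and $HS$), and the finitely many groups of Lie type of order at most $10^9$. The family $\PSL_2(q)$ can be discarded at once by \autoref{thm:PSL2_all_nondeg}, because its proof verified \eqref{eq:1EC_cent} or \eqref{eq:2EC_cent} in each case. For even $q$, the unipotent class is handled in \cite{PitermanRoelants}; if that reference does not phrase its result via \eqref{eq:2EC_conj}, these groups are simply included in the computation. The classes of $\Alt_n$ covered by \autoref{thm:symm_involutions_summary} and \autoref{thm:alt_cycles_summary} may likewise be skipped, although rechecking them costs almost nothing. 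For each remaining group, I would take a faithful permutation representation from \texttt{atlasrep}, list its conjugacy classes, and keep the real ones. Since an automorphism of $G$ carries a class satisfying \eqref{eq:2EC_conj} to another such class, it suffices to treat one class per orbit of the normaliser $N_{\Sym_n}(G)$.

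Next, for each retained class $\calC$, I would fix $x_0\in\calC$. By \autoref{lem:shifting_Cyx}, \eqref{eq:2EC_conj} holds if and only if it holds with $x=x_0$. The algorithm computes ${}^yx_0$ for every $y\in\calC$ and records the multiplicities in a dictionary keyed by the resulting permutation. This gives the sizes of the sets $\calC_{z,x_0}$ in time linear in $|\calC|$. If some $z$ occurs exactly once, \eqref{eq:1EC_conj} holds and the class is done. Otherwise, for each $z$ occurring exactly twice, with $\calC_{z,x_0}=\{a_0,a_1\}$, I would search for an odd-order element in the coset $hC_G(a_0)$, where $h$ is any element with ${}^ha_0=a_1$ obtained from a \texttt{RepresentativeAction} call. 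Concretely, this means sampling random elements $hc$ with $c\in C_G(a_0)$ until one of odd order appears. This is the same device the paper used in the unipotent case of $\PSL_2(q)$.

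The main obstacle is computational cost for the largest groups and classes. Near $|G|\sim10^9$, a class can have around $10^8$ elements, and the centralisers $C_G(x_0)$ are very small. A naive full enumeration of $\calC$ could exhaust memory. To mitigate this, I would iterate over $\calC$ lazily as $\{{}^g a : g\in \text{transversal of } C_G(a)\}$, storing only the images ${}^yx_0$. A further option is to stop early once an image can be certified unique, by checking $|yC_G(x_0)\cap\calC|=1$ directly via \eqref{eq:1EC_cent}; this requires only $|C_G(x_0)|$ multiplications per candidate $y$. Trying random $y$ this way first should handle almost all classes cheaply, with the full multiplicity count kept as a fallback. A subsidiary risk is that some class satisfies neither condition, which would contradict the statement. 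One would then expect any such class to appear among those with small $|\calC|$, and these are exactly the cheap cases.
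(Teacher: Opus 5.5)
Your proposal is correct and follows the paper's verification essentially step for step: discard $\PSL_2(q)$, work in \texttt{atlasrep} permutation representations, treat one class per orbit of the normaliser, fix $x_0$ and tally the values ${}^yx_0$ ($y\in\calC$) in a dictionary, and, when some value occurs exactly twice, search for an odd-order element conjugating $a_0$ to $a_1$; your random tests of \eqref{eq:1EC_cent}, lazy enumeration and sampling in $hC_G(a_0)$ are only implementation refinements (the sampling should fall back to enumerating the finite coset $hC_G(a_0)$ in full, so that the search terminates when that coset contains no odd-order element). Your caution about the involution class of $\PSL_2(q)$, $q$ even, is well placed, since the paper takes its non-degeneracy from \cite{PitermanRoelants} rather than from \eqref{eq:2EC_conj}, but the gap is easily closed: for $x=\left(\begin{smallmatrix}1&1\\0&1\end{smallmatrix}\right)$ one has $C_G(x)=\bigl\{\left(\begin{smallmatrix}1&s\\0&1\end{smallmatrix}\right): s\in\F_q\bigr\}$, and for $a=\left(\begin{smallmatrix}1&0\\1&1\end{smallmatrix}\right)$ the product $a\left(\begin{smallmatrix}1&s\\0&1\end{smallmatrix}\right)$ has trace $s$, so $aC_G(x)\cap\calC=\{a\}$ and \eqref{eq:1EC_cent} holds.
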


In view of this theorem, it is reasonable to ask the following question.

\begin{question}
    Does each real conjugacy class in a non-abelian simple group satisfy \eqref{eq:2EC_conj}?
\end{question}

We reran the code for all, not necessarily real, conjugacy classes in non-abelian simple groups of order up to $10^8$ and found that in each case, \eqref{eq:2EC_conj} is satisfied. This motivates the following, more general question.

\begin{question}
    Does each conjugacy class in a non-abelian simple group satisfy \eqref{eq:2EC_conj}?
\end{question}

It might also be worthwhile to address these questions for further classes of finite groups that are not necessarily simple, such as almost simple or quasi-simple groups.

\end{document}